\providecommand{\U}[1]{\protect\rule{.1in}{.1in}}
\theoremstyle{plain}
\newtheorem{corollary}{Corollary}
\newtheorem{lemma}{Lemma}
\newtheorem{proposition}{Proposition}
\newtheorem{remark}{Remark}
\newtheorem{theorem}{Theorem}
\numberwithin{equation}{section}
\DeclareMathOperator{\dist}{dist} \DeclareMathOperator{\diam}{diam}
\begin{document}
\title[Nodal sets of Robin and Neumann eigenfunctions ]
{Nodal sets of Robin and Neumann eigenfunctions}
\author{ Jiuyi Zhu}
\address{
Department of Mathematics\\
Louisiana State University\\
Baton Rouge, LA 70803, USA\\
Email:  zhu@math.lsu.edu }
\thanks{Zhu is supported in part by  NSF grant DMS-1656845 and OIA-1832961}
\date{}
\subjclass[2010]{35J05, 58J50, 35P15, 35P20.} \keywords {Nodal sets, Doubling inequality, Carleman
estimates, Robin eigenfunctions}

\begin{abstract}
 We investigate the measure of nodal sets for Robin and Neumann eigenfunctions in the domain and on the boundary of the domain. A polynomial upper bound for the interior nodal sets is obtained for Robin eigenfunctions in the smooth domain. For the analytic domain, the sharp upper bounds of the interior nodal sets was shown for Robin eigenfunctions. More importantly, we obtain the sharp upper bounds for the boundary nodal sets of Neumann eigenfunctions with new quantitative global Carleman estimates.
  Furthermore, the sharp doubling inequality and vanishing order of Robin eigenfunctions on the boundary of the domain are obtained.
\end{abstract}

\maketitle
\section{Introduction}

In this paper, we consider the Robin eigenfunctions with  a possible large parameter $|\alpha|$
\begin{equation}
\left \{ \begin{array}{lll}
-\triangle u=\lambda u \quad &\mbox{in} \ {\Omega},  \medskip\\
\frac{\partial u}{\partial \nu}+\alpha u=0 \quad &\mbox{on} \ {\partial\Omega}
\end{array}
\right.
\label{robin}
\end{equation}
on a smooth and compact domain $\Omega\in \mathbb R^n$ with $n\geq 2$, where $\nu$ is an unit outer normal and $n$ is the dimension of the space. In the case of $\alpha=0$, the equations (\ref{robin}) is called the Neumann eigenvalue problem
\begin{equation}
\left \{ \begin{array}{rll}
-\triangle u=\lambda u \quad &\mbox{in} \ {\Omega},  \medskip\\
\frac{\partial u}{\partial \nu}=0 \quad &\mbox{on} \ {\partial\Omega}.
\end{array}
\right.
\label{Neumann}
\end{equation}
In the case of $\alpha=\infty$, it can be considered as the Dirichlet eigenvalue problem
\begin{equation}
\left \{ \begin{array}{rll}
-\triangle u=\lambda u \quad &\mbox{in} \ {\Omega},  \medskip\\
u=0 \quad &\mbox{on} \ {\partial\Omega}.
\end{array}
\right.
\label{Diri}
\end{equation}
For any fixed constant $\alpha$, there exists a sequence of eigenvalues $\lambda_1<\lambda_2\leq\cdots \to \infty$. If $\alpha$ is negative, the first finite number of eigenvalues can be negative. Moreover, $\lambda_k\to -\infty$ as $\alpha\to -\infty$ for any fixed $k\geq 1$.
If one considers the Robin eigenvalue problem (\ref{robin}) as an elliptic problem  in a special case that $\lambda=0$, the equation (\ref{robin}) is reduced to the Steklov eigenvalue problem
\begin{equation}
\left \{ \begin{array}{lll}
\triangle u=0 \quad &\mbox{in} \ {\Omega},  \medskip\\
\frac{\partial u}{\partial \nu}+\alpha u=0 \quad &\mbox{on} \ {\partial\Omega}.
\end{array}
\right.
\label{steklov}
\end{equation}
One may regard $-\alpha$ as the eigenvalue for the Steklov eigenvalue problem (\ref{steklov}). Hence, the model (\ref{robin}) includes general kind of eigenvalue problems for Laplacian.

We are interested in the nodal sets of eigenfunctions in (\ref{robin}) and (\ref{Neumann}) in the domain $\Omega$ and on the boundary $\partial\Omega$.
The nodal sets are the zero level sets of eigenfunction. For the eigenfunctions of Laplacian
\begin{equation}
\triangle u+\lambda u=0
\end{equation}
on a compact smooth Riemannian manifold $\mathcal{M}$, Yau \cite{Y} conjectured that the Hausdorff measure of nodal sets can be controlled above and below by eigenvalues as
$$ c\sqrt{\lambda}\leq H^{n-1}( \{x\in\mathcal{M}| u(x)=0\})\leq C\sqrt{\lambda},   $$
where $c$, $C$ depend on the manifold $\mathcal{M}$. For the real analytic manifolds, the conjecture was answered by Donnelly-Fefferman in their seminal paper \cite{DF}.
A relatively simpler proof for the upper
bound of general second order elliptic equations on the analytic domain was given by  Lin \cite{Lin} by a different approach.

For the smooth manifolds with $n=2$, Donnelly-Fefferman \cite{DF1} and  Dong \cite{D} independently showed the upper bound  $H^{1}(\{x\in\mathcal{M}| u(x)=0\})\leq C \lambda^{\frac{3}{4}} $ by using different arguments.
 A slight improvement with upper bound $C \lambda^{\frac{3}{4}-\epsilon}$ was given by  Logunov and
Malinnikova \cite{LM}.  For higher dimensions $n\geq 3$, Hardt and Simon \cite{HS} derived the exponential upper bound
$H^{n-1}(\{\mathcal{M} | u=0\})\leq C \lambda^{C\sqrt{\lambda}}$.  Very recently,
 Logunov in \cite{Lo} obtained a polynomial upper bound
\begin{equation} H^{n-1}( \{x\in\mathcal{M} | u(x)=0\})\leq C \lambda^{\beta},
\label{poly}
\end{equation}
where $\beta>\frac{1}{2}$ depends only on the dimension.
For the lower bound, Logunov \cite{Lo1} completely answered the Yau's conjecture and obtained the sharp lower bound as
$ c\sqrt{\lambda}\leq H^{n-1}(\{x\in\mathcal{M}| u(x)=0\})$ for
 smooth manifolds in any dimensions. For $n=2$, such sharp lower bound was obtained earlier by Br\"uning \cite{Br}. This breakthrough improved a polynomial lower bound obtained early by Colding and  Minicozzi \cite{CM}, Sogge and Zelditch \cite{SZ}. See also other polynomial lower bounds by different methods, e.g. \cite{HSo}, \cite{M}, \cite{S} and other related results on nodal sets of eigenfunctions, e.g. \cite{Han}, \cite{HLu}, \cite{Lu}.
 The recent breakthrough on nodal sets of eigenfunctions in \cite{LM},  \cite{Lo} and \cite{Lo1} is based on seminal work on new combinatorial arguments for doubling index and further exploration of frequency functions in \cite{GL} and \cite{HL}.

For the Neumann eigenvalue problem (\ref{Neumann}) or the Dirichlet eigenvalue problem (\ref{Diri}), the polynomial upper as (\ref{poly}) can be derived
\begin{equation} H^{n-1}(\{x\in\Omega | u(x)=0\})\leq C \lambda^{\beta}
\label{poly2}
\end{equation}
for smooth domains with some $\beta>\frac{1}{2}$ depending only on the dimension. One can construct a double manifold $\tilde{\Omega}=\Omega\cup\Omega$ to get rid of the boundary. Then one can do an even extension for the Neumann eigenvalue problem  or an odd extension for the  Dirichlet eigenvalue problem on the domain to have second order elliptic equations with Lipschitz metrics. The following sharp doubling inequality on the double manifold
\begin{equation}
\|u\|_{L^2(\mathbb B_{2r}(x))}\leq e^{ C\sqrt{\lambda}}\|u \|_{L^2(\mathbb B_{r}(x))}
\label{doubleuse}
\end{equation}
can be deduced as \cite{DF2} for the second order elliptic equations with Lipschitz coefficients. Applying the new combinatorial arguments in \cite{Lo} for the aforementioned second order elliptic equations with Lipschitz coefficients and doubling inequality (\ref{doubleuse}), one can obtain the polynomial upper bound (\ref{poly2}). We are interested in the measure of nodal sets in $\Omega$ for general eigenvalue problems of Laplacian (\ref{robin}). Especially, we want to find out how the upper bound of nodal sets depends on possible large parameter $|\alpha|$ on the boundary. For the interior nodal sets, we can show that

\begin{theorem}
Let $u$ be the Robin eigenfunction in (\ref{robin}) with $n\geq 3$. There exists a positive constant $C$ depending only on the smooth domain $\Omega$ such that
\begin{equation} H^{n-1}(\{x\in \Omega |u(x)=0\})\leq C(|\alpha|+\sqrt{|\lambda|})^{\beta},
\label{interior} \end{equation}
where $\beta>1$ depending only on the dimension $n$.
\label{th1}
\end{theorem}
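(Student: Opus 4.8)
The plan is to reduce the Robin boundary condition to a Neumann‑type condition for a divergence‑form elliptic equation, pass to the double manifold exactly as in the Neumann/Dirichlet case outlined above, establish a doubling inequality with constant $e^{C(|\alpha|+\sqrt{|\lambda|})}$, and then feed this into Logunov's combinatorial scheme \cite{Lo}, keeping track of the dependence on the parameter $|\alpha|$ at every stage.

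\emph{Step 1 (removing the Robin condition).} Let $d(x)=\dist(x,\partial\Omega)$, which is smooth on a fixed collar of $\partial\Omega$, and extend it to a positive smooth function on $\Omega$. Put $v=e^{-\alpha d}u$; since $e^{-\alpha d}>0$ everywhere, $\{v=0\}=\{u=0\}$. Because $\nabla d=-\nu$ on $\partial\Omega$, one computes $\partial_\nu v=e^{-\alpha d}(\partial_\nu u+\alpha u)=0$ on $\partial\Omega$, so $v$ satisfies a homogeneous Neumann condition, while in $\Omega$
\begin{equation*}
\di\!\bigl(e^{2\alpha d}\nabla v\bigr)+e^{2\alpha d}\bigl(\lambda+\alpha^{2}|\nabla d|^{2}+\alpha\,\triangle d\bigr)v=0 .
\end{equation*}
This is a divergence‑form equation whose leading coefficient $e^{2\alpha d}$ is smooth with $\|e^{2\alpha d}\|_{\Lip}\lesssim 1+|\alpha|$ on a fixed collar and whose zeroth order coefficient is bounded by $C(\alpha^{2}+|\lambda|)$. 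Away from $\partial\Omega$ one simply works with $u$ itself.

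\emph{Step 2 (double manifold and doubling inequality).} Reflect $v$ evenly across $\partial\Omega$: the Neumann condition is precisely the transmission condition for the operator above, so the reflected function $\tilde v$ solves the corresponding equation on the double manifold $\tilde\Omega=\Omega\cup\Omega$ glued along $\partial\Omega$, now with a Lipschitz leading coefficient (Lipschitz norm $\lesssim 1+|\alpha|$) and a bounded potential $\lesssim \alpha^{2}+|\lambda|$. Using Almgren's frequency function for this divergence‑form equation as in \cite{GL}, \cite{HL} and the Lipschitz‑coefficient doubling estimate of \cite{DF2} (or, alternatively, a Carleman estimate carried out directly up to $\partial\Omega$), I would prove
\begin{equation*}
\|\tilde v\|_{L^{2}(\mathbb B_{2r}(x))}\le e^{C(|\alpha|+\sqrt{|\lambda|})}\,\|\tilde v\|_{L^{2}(\mathbb B_{r}(x))}
\end{equation*}
for $x\in\tilde\Omega$ and $r$ below a scale comparable to $(1+|\alpha|)^{-1}$; equivalently, the doubling index of $u$ on every such ball is $N\lesssim|\alpha|+\sqrt{|\lambda|}$. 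The key arithmetic point is that the $\alpha^{2}$ in the potential contributes only its square root $|\alpha|$ to the exponent, matching the size of the first‑order (drift) term.

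\emph{Step 3 (combinatorial bound).} With the uniform bound $N\lesssim|\alpha|+\sqrt{|\lambda|}$ on the doubling index in hand, apply Logunov's iteration/combinatorial argument \cite{Lo} for second order elliptic equations with Lipschitz coefficients to get $H^{n-1}(\{u=0\}\cap\mathbb B)\le C N^{\beta}$ on each ball $\mathbb B$ of a finite cover: interior balls of fixed size, and a collar about $\partial\Omega$ tiled by $\sim|\alpha|^{n-1}$ balls of radius $\sim(1+|\alpha|)^{-1}$ which are first rescaled to unit size. Summing over the cover and using $\{u=0\}=\{v=0\}$ yields $H^{n-1}(\{x\in\Omega:u(x)=0\})\le C(|\alpha|+\sqrt{|\lambda|})^{\beta}$. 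The exponent is $\beta>1$ rather than the optimal $\beta>\tfrac12$ precisely because the leading coefficient on $\tilde\Omega$ carries a Lipschitz norm that grows with $|\alpha|$ (equivalently, because of the rescaling of the boundary collar by the factor $\sim|\alpha|$), which costs additional powers of $|\alpha|+\sqrt{|\lambda|}$ in \cite{Lo}.

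\emph{Main obstacle.} The crux is Step 2: the doubling inequality uniformly up to $\partial\Omega$ with the sharp dependence $e^{C(|\alpha|+\sqrt{|\lambda|})}$. One must show that the frequency function attached to the divergence‑form equation — whose derivative absorbs both the $O(|\alpha|)$ drift and the $O(\alpha^{2}+|\lambda|)$ potential, and on the double manifold also the Lipschitz jump of the metric — is almost monotone with error terms that, after the standard integration over dyadic scales, exponentiate to at most $C(|\alpha|+\sqrt{|\lambda|})$. For large $|\alpha|$ this is the Robin‑to‑Dirichlet near‑degeneration, and it has to be controlled at the natural scale $(1+|\alpha|)^{-1}$, which is where the worse exponent ultimately originates.
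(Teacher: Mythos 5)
Your plan is structurally the same as the paper's: conjugate by $e^{-\alpha d}$ to replace the Robin condition with a Neumann one, even-reflect to the double manifold $\tilde\Omega$, prove a doubling inequality $\|v\|_{L^2(\mathbb B_{2r})}\le e^{C(|\alpha|+\sqrt{|\lambda|})}\|v\|_{L^2(\mathbb B_r)}$, and feed the doubling index $N\lesssim|\alpha|+\sqrt{|\lambda|}$ into Logunov's combinatorial scheme — this is exactly Sections 2--3 of the paper, and your tiling of the collar at scale $(1+|\alpha|)^{-1}$ is the correct way to make the Logunov step honest when the lower-order coefficients grow (the paper is terse here and defers to \cite{GR}). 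Two points need correcting. First, the claim $\|e^{2\alpha d}\|_{\Lip}\lesssim 1+|\alpha|$ on a \emph{fixed} collar is false: on $\{0\le d\le\rho\}$ the weight ranges over $[1,e^{2\alpha\rho}]$ and $|\nabla e^{2\alpha d}|=2|\alpha|e^{2\alpha d}|\nabla d|$ is exponentially large in $|\alpha|$, so the divergence-form operator $\di(e^{2\alpha d}\nabla\cdot)$ neither has a controlled Lipschitz seminorm nor uniform ellipticity constants there. Dividing through by $e^{2\alpha d}$ gives what the paper actually works with, namely $\triangle v+2\alpha\nabla d\cdot\nabla v+(\lambda+\alpha^2|\nabla d|^2+\alpha\triangle d)v=0$: the leading part (the reflected metric $\tilde g$) is then \emph{independent of} $\alpha$ and only the drift ($\lesssim|\alpha|$) and potential ($\lesssim\alpha^2+|\lambda|$) grow, which is what makes the Carleman parameter threshold $\tau\gtrsim|\alpha|+\sqrt{|\lambda|}$ the natural one. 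An Almgren frequency argument run directly on $\di(e^{2\alpha d}\nabla\cdot)$ over a fixed collar would not give the claimed almost-monotonicity. Second, your explanation of the exponent is a red herring: the paper's $\beta>1$ is simply Logunov's exponent written against the doubling index $N\sim|\alpha|+\sqrt{|\lambda|}$ (for Laplace $N\sim\sqrt\lambda$, so the identical bound reads $\lambda^{\beta/2}$ with $\beta/2>1/2$); it is not an extra cost from an $|\alpha|$-dependent Lipschitz norm of the leading coefficient (there is none) or from the collar rescaling.
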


We briefly sketch the proof of the theorem. To prove (\ref{interior}), we first need to derive the sharp doubling inequality
\begin{equation}
\|u\|_{L^2(\mathbb B_{2r}(x))}\leq e^{ C(|\alpha|+\sqrt{|\lambda|})}\|u \|_{L^2(\mathbb B_{r}(x))}
\label{doubleuse2}
\end{equation}
on the double manifold $\tilde{\Omega}$. We introduce an auxiliary function involving the distance to the boundary, then transform the Robin eigenvalue problem into  second order elliptic equations with Neumann boundary conditions. We do an even reflection and obtain some quantitative Carleman estimates to show (\ref{doubleuse2}) on the double manifold. The combination of  the results in \cite{Lo} and the doubling inequality (\ref{doubleuse2}) implies Theorem \ref{th1}.

For the interior nodal sets of Robin eigenfunctions in analytic domains, we can show that
\begin{theorem}
Let $u$ be the Robin eigenfunction in (\ref{robin}). There exists a positive constant $C$ depending only on the real analytic domain $\Omega$ such that
\begin{equation} H^{n-1}(\{x\in \Omega |u(x)=0\})\leq C(|\alpha|+\sqrt{|\lambda|}).
\label{interior2} \end{equation}
\label{th4}
\end{theorem}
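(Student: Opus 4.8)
The plan is to combine the sharp doubling inequality (\ref{doubleuse2}), which is available from the proof of Theorem \ref{th1} and holds at every scale $r\le r_0$ (at interior points directly, and near $\partial\Omega$ via the even reflection to the double manifold $\tilde\Omega$), with the complex‑analytic method of counting zeros going back to Donnelly--Fefferman \cite{DF} and Lin \cite{Lin}. Throughout write $\Lambda=|\alpha|+\sqrt{|\lambda|}$. The point of passing to the analytic setting is that, once $u$ has been complexified, the nodal set can be bounded \emph{linearly} in the doubling index, so the dimensional loss produced in Theorem \ref{th1} by the combinatorial argument of \cite{Lo} disappears. I would first localize: cover $\overline\Omega$ by a bounded (depending only on $\Omega$) number of balls $B_{r_0}(x_i)$ at a fixed scale $r_0=r_0(\Omega)$, separated into \emph{interior balls} with $B_{2r_0}(x_i)\subset\Omega$ and \emph{boundary balls} with $x_i\in\partial\Omega$, and estimate $H^{n-1}(\{u=0\}\cap B_{r_0/4}(x_i))$ for each; summing over the covering then yields (\ref{interior2}).

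For an interior ball, $u$ solves the constant‑coefficient equation $\Delta u+\lambda u=0$, so the eigenfunction derivative bounds $|\partial^\gamma u(x_i)|\le (C\Lambda+C|\gamma|)^{|\gamma|}\,\|u\|_{L^\infty(B_{r_0}(x_i))}$ produce a holomorphic extension $\tilde u$ to a polydisc $P_{\tau r_0}(x_i)\subset\mathbb C^n$, with $\tau=\tau(n)$ fixed, satisfying
\[
\|\tilde u\|_{L^\infty(P_{\tau r_0}(x_i))}\le e^{C\Lambda}\,\|u\|_{L^\infty(B_{r_0}(x_i))}.
\]
Iterating (\ref{doubleuse2}) over the bounded range $[\tau r_0/2,\,r_0]$ together with elliptic $L^2$--$L^\infty$ bounds gives the three‑ball inequality $\|u\|_{L^\infty(B_{r_0}(x_i))}\le e^{C\Lambda}\,\|u\|_{L^\infty(B_{\tau r_0/2}(x_i))}$, hence
\[
\|\tilde u\|_{L^\infty(P_{\tau r_0}(x_i))}\le e^{C\Lambda}\,\|\tilde u\|_{L^\infty(P_{\tau r_0/2}(x_i)\cap\mathbb R^n)}.
\]
Then the standard lemma on zeros of holomorphic functions — slice the polydisc by complex lines, bound the number of zeros on each slice through a well‑chosen point by Jensen's formula, and integrate against the Crofton measure — yields $H^{n-1}(\{u=0\}\cap B_{\tau r_0/4}(x_i))\le C\Lambda\, r_0^{n-1}$.

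For a boundary ball the same scheme should work after two reductions. Since $\partial\Omega$ is real analytic, an analytic diffeomorphism flattens it on $B_{2r_0}(x_i)$; and the auxiliary substitution from the proof of Theorem \ref{th1} — legitimate here because the distance to the analytic hypersurface $\partial\Omega$ is itself analytic near $\partial\Omega$ — replaces the Robin condition by a Neumann‑type one at the cost of harmless factors $e^{C|\alpha|}\le e^{C\Lambda}$, producing an elliptic equation with analytic coefficients on a half‑ball. One then needs a \emph{quantitative analytic regularity estimate up to the boundary}, of the form $|\partial^\gamma v(y_0)|\le (C\Lambda+C|\gamma|)^{|\gamma|}\,\|v\|_{L^2(B_1^+)}$ for $y_0$ in the closed half‑ball, which furnishes a holomorphic extension of $v$ across the flattened boundary to a fixed polydisc with growth $e^{C\Lambda}$; with that in hand the interior argument transfers verbatim, the three‑ball step now using (\ref{doubleuse2}) on $\tilde\Omega$. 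This is needed because the double of an analytic domain is in general only Lipschitz, so the analytic extension near $\partial\Omega$ cannot come from reflection and must be obtained directly.

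The main obstacle is precisely this last reduction: obtaining the Cauchy‑type derivative bounds up to the analytic boundary with the correct dependence on the large parameters, i.e.\ with the Robin datum contributing only $O(|\alpha|)$ and the eigenvalue only $O(\sqrt{|\lambda|})$ to the effective frequency. Qualitative analyticity of $u$ on $\overline\Omega$ is classical (Morrey--Nirenberg), but the quantitative version requires iterating the equation together with the boundary relation (a Cauchy--Kovalevskaya‑type recursion) while carefully tracking the growth of the constants. Once those estimates are in place, everything else — the covering, the interior complexification, and the holomorphic zero count — is routine given (\ref{doubleuse2}).
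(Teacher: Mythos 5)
Your strategy is the same as the paper's: transform the Robin condition to Neumann via $e^{-\alpha d}$, use the interior doubling inequality on the double manifold, complexify, and bound zeros by the Jensen/Crofton argument; the decomposition into a boundary region and an interior region also mirrors Section~4. But you leave the decisive step unresolved. You correctly single out the quantitative analytic extension across $\partial\Omega$ — Cauchy‑type bounds $|\partial^\gamma v(y_0)|\lesssim(C\Lambda+C|\gamma|)^{|\gamma|}$ up to the closed half‑ball, with the large parameters entering only through an $e^{C\Lambda}$ factor — as ``the main obstacle,'' describe it as a Cauchy--Kovalevskaya‑type recursion whose constants must be tracked, and then state that ``once those estimates are in place, everything else is routine.'' That step is precisely the content of the paper's Proposition~\ref{pro4}, and as written your proposal does not prove it; a recursion that tracks the growth of individual Taylor coefficients through the equation and the boundary relation is delicate and would need to be carried out in full.

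The paper closes this gap by a different, and considerably simpler, device than the term‑by‑term recursion you gesture at. Setting $\Lambda=|\alpha|+\sqrt{|\lambda|}$ and rescaling $\hat u_p(x)=\hat u\bigl(p+x/\Lambda\bigr)$, the transformed equation (\ref{non}) has coefficients bounded \emph{independently} of $\alpha$ and $\lambda$, because the analytic signed distance function makes $b$ and $q$ in (\ref{fffwww}) analytic and the rescaling absorbs the $\Lambda$‑growth. The black‑box Cauchy--Kowaleski theorem then extends $\hat u_p$ across the flat boundary into a ball of \emph{fixed} radius with bounded ratio of sup norms; undoing the rescaling this is a ball of radius $\sim 1/\Lambda$ for $\hat u$, and iterating $\sim\Lambda$ times sweeps across a fixed tubular neighborhood $\Omega_2$ of $\partial\Omega$ at the cost of a factor $e^{C\Lambda}$. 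This gives a genuine real‑analytic extension of $\hat u$ to a full neighborhood of $\partial\Omega$ (no half‑balls), after which the holomorphic extension and zero counting proceed exactly as in the interior. Two further minor divergences from your sketch: the paper does not flatten $\partial\Omega$ by a diffeomorphism but works directly with the analytic signed distance function; and in the interior the complexification needs only $\sqrt{|\lambda|}$ in the derivative bounds since $\Delta u=-\lambda u$, with the $\alpha$ dependence entering solely through the doubling inequality — harmless, but your $(C\Lambda+C|\gamma|)^{|\gamma|}$ there is slightly wasteful.
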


The interior nodal sets estimates for Dirichlet eigenvalue problem (\ref{Diri}) and Neumann eigenvalue problem (\ref{Neumann}) in real analytic domains have been shown by Donnelly and Fefferman in \cite{DF2} to be
$$ H^{n-1}(\{x\in \Omega |u(x)=0\})\leq C \sqrt{\lambda}.$$
Our strategy is to use the doubling inequality (\ref{doubleuse2}) and a growth control lemma on the number of zeros for complex analytic functions. We first find out the upper bound for nodal sets as (\ref{interior2}) for the regions in the neighborhood of the boundary, then obtain the nodal sets estimates for regions away from the boundary. The combination of the estimates in the two regions gives Theorem 2.

The rest of the paper is devoted to a challenging topic on the measure of nodal sets on the boundary. There are few results on the study of the measure of boundary nodal sets.
For the Neumann and Robin eigenfunctions, it is possible that the nodal sets of eigenfunctions in $\Omega$ intersect the boundary $\partial\Omega$. Thus, it is interesting to find out how large the measure of boundary nodal sets is and how the measure depends on $\alpha$ and $\lambda$. The nodal sets on the boundary are co-dimension one. For the Neumann eigenfunctions, we can show that
\begin{theorem}
Let $u$ be the Neumann  eigenfunction in (\ref{Neumann}) in the real analytic domain $\Omega$. There exists a positive constant $C$ that depends only on the domain $\Omega$ such that
\begin{align}
 H^{n-2}(\{x\in \partial\Omega| u(x)=0\})\leq C\sqrt{\lambda}.
 \label{Sha}
 \end{align}
\label{thnew}
\end{theorem}

The upper bound of nodal sets on the boundary for Neumann eigenfunctions in the Theorem is optimal. Such upper bound for boundary nodal sets of Neumann eigenfunctions was derived by Toth and Zelditch \cite{TZ} in planar analytic domains using a different method. Theorem \ref{thnew} improves such result to general dimensions. The discussion of the sharpness is provided in Remark 4 in Section 5.

Using the similar idea, we further study the upper bounds for the boundary nodal sets of Robin eigenfunctions.
\begin{corollary}
Let $u$ be the Robin eigenfunction in (\ref{robin}) in the real analytic domain $\Omega$.
There exists a positive constant $C$ depending only on the domain $\Omega$  such that
$$ H^{n-2}(\{x\in\partial\Omega | u(x)=0\})\leq C(|\alpha|+\sqrt{|\lambda|}). $$
\label{th2}
\end{corollary}

Since the model (\ref{robin}) includes the Steklov eigenvalue problem (\ref{steklov}) as the special case with $\lambda=0$, the upper bound in the corollary includes the sharp results for boundary nodal sets obtained by Zeldtich \cite{Z} for Steklov eigenfunctions.
To derive the results in Theorem  \ref{thnew} and the Corollary \ref{th2}, much more efforts are devoted to obtaining the doubling inequality for Robin eigenfunctions on the boundary $\partial\Omega$. It is also challenging to obtain doubling inequalities on the boundary.
\begin{theorem}
Let $u$ be the Robin eigenfunction in (\ref{robin}). There exist positive constants $C$ and $r_0$ depending only on the smooth domain $\Omega$ such that
\begin{equation}
\|u\|_{L^2(\mathbb B_{2r}(x))}\leq e^{C(|\alpha|+\sqrt{|\lambda|})}\|u\|_{L^2(\mathbb B_{r}(x))}
\label{LLL}
\end{equation}
for any $0<r<r_0$ and any $\mathbb B_{2r}(x)\subset\partial\Omega $.
\label{th3}
\end{theorem}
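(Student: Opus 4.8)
The plan is to reduce the boundary doubling inequality \eqref{LLL} to an interior-type doubling inequality for a degenerate elliptic equation, via a reflection across the boundary, and then to prove the latter by a Carleman estimate tailored to the large parameter $|\alpha| + \sqrt{|\lambda|}$. The first step is to straighten the boundary: near a point $x_0 \in \partial\Omega$, choose boundary normal (Fermi) coordinates $(x', x_n)$ so that $\partial\Omega$ becomes $\{x_n = 0\}$, $\Omega$ becomes $\{x_n > 0\}$, and the Laplacian becomes a second-order elliptic operator $\mathcal{L}$ with Lipschitz coefficients whose principal part at $x_n = 0$ is the Euclidean one. In these coordinates the Robin condition reads $\partial_{x_n} u = \alpha u + (\text{lower order tangential terms})$ on $\{x_n = 0\}$. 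The point of the boundary doubling estimate is that it concerns balls $\mathbb B_{2r}(x) \subset \partial\Omega$, i.e.\ it is really a statement about the restriction $u|_{\partial\Omega}$; but to control that restriction one must control $u$ in a full neighborhood in $\Omega$, so the genuine object is a doubling inequality in half-balls $\mathbb B_r^+ = \mathbb B_r \cap \{x_n \geq 0\}$ together with a trace/Caccioppoli bound relating $\|u\|_{L^2(\mathbb B_r^+)}$ to $\|u\|_{L^2(\mathbb B_r \cap \{x_n=0\})}$.

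The second step is to absorb the Robin boundary condition into the equation. Following the sketch already given in the paper for Theorem~\ref{th1}, I would introduce an auxiliary weight $w = e^{\phi(x) }$ where $\phi$ is smooth, depends on the distance to $\partial\Omega$, and is chosen so that $\partial_{x_n}\phi|_{x_n=0} = \alpha$; then $v = u/w$ (or $v = e^{-\phi}u$) satisfies a second-order elliptic equation $\mathcal{L}_1 v = \lambda v + (\text{first order terms with coefficients of size } O(|\alpha|))$ in $\{x_n > 0\}$ together with the \emph{homogeneous Neumann} condition $\partial_{x_n} v = 0$ on $\{x_n = 0\}$. Now perform an even reflection $v(x', -x_n) := v(x', x_n)$ across $\{x_n = 0\}$; the Neumann condition guarantees that the extended $v$ is a weak solution of a uniformly elliptic equation with Lipschitz coefficients on the full double manifold $\tilde\Omega$, but with a \emph{potential and drift of size $O(|\alpha|^2 + |\lambda|)$ and $O(|\alpha|)$} respectively:
\begin{equation}
\mathcal{L}_1 v + b\cdot\nabla v + c v = 0, \qquad \|b\|_\infty \lesssim |\alpha| + \sqrt{|\lambda|},\quad \|c\|_\infty \lesssim |\alpha|^2 + |\lambda|.
\label{reflected}
\end{equation}

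The third and central step is to prove a doubling inequality with the sharp exponent $C(|\alpha| + \sqrt{|\lambda|})$ for equation \eqref{reflected} on $\tilde\Omega$. This is exactly the type of estimate recorded in \eqref{doubleuse2}, and the mechanism is a quantitative three-ball (Hadamard) inequality obtained from a Carleman estimate of the form $\|e^{\tau\psi} P^{1/2} v\|^2 \lesssim \|e^{\tau\psi}\mathcal{L}_1 v\|^2$ for a suitable radial weight $\psi$ and all $\tau$ above a threshold $\tau_0 \gtrsim |\alpha| + \sqrt{|\lambda|}$; the drift and potential terms in \eqref{reflected} are absorbed on the left provided $\tau \gtrsim \|b\|_\infty + \|c\|_\infty^{1/2} \lesssim |\alpha| + \sqrt{|\lambda|}$, which is what forces that scale into the final exponent. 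Iterating the three-ball inequality on a chain of balls from $\mathbb B_r(x)$ outward (the standard propagation-of-smallness argument) converts it into \eqref{doubleuse2}, and then restricting back to $\{x_n = 0\}$ — using a trace inequality in one direction and, in the other, the elliptic Caccioppoli inequality plus a boundary Poincaré-type bound to recover the solid $L^2$ norm from the boundary $L^2$ norm — yields \eqref{LLL}. I would also need to handle small $r$ uniformly, which is why the statement restricts to $0 < r < r_0$: for $r$ below the scale at which the metric is comparable to Euclidean one can run everything with uniform constants.

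The main obstacle is getting the \emph{linear} (rather than quadratic) dependence on $|\alpha|$ in the final exponent. Naively, the reflected potential $c$ in \eqref{reflected} has size $|\alpha|^2$, and a crude Carleman argument would need $\tau \gtrsim \sqrt{\|c\|_\infty} \sim |\alpha|$ — which is fine — but the drift term $b$ interacts with the weight $e^{\phi}$ and with the reflection in a way that can produce genuinely $|\alpha|^2$-sized contributions unless the conjugating weight $\phi$ and the Carleman weight $\psi$ are chosen compatibly. The resolution is to choose $\phi$ so that $e^{-\phi}\mathcal{L}(e^{\phi}\cdot)$ has its zeroth-order term exactly cancel the part of $\alpha^2$ coming from $|\nabla\phi|^2$, leaving only a first-order term of size $O(|\alpha|)$ plus the spectral term $\lambda$; then the Carleman threshold is governed by $|\alpha| + \sqrt{|\lambda|}$ as desired. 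A secondary technical point is that after even reflection the coefficients are only Lipschitz across $\{x_n=0\}$, so the Carleman estimate must be the version valid for Lipschitz (not $C^1$) principal coefficients — this is available in the literature cited (cf.\ the reasoning behind \eqref{doubleuse}), and I would invoke it rather than reprove it.
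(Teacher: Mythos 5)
Your steps (1)--(4) match the paper: conjugate by $e^{-\alpha d(x)}$ to make the boundary condition homogeneous Neumann, even-reflect across $\{x_n=0\}$, and prove a half-ball doubling inequality with exponent $C(|\alpha|+\sqrt{|\lambda|})$ via a Carleman estimate with threshold $\tau\gtrsim |\alpha|+\sqrt{|\lambda|}$; your observation that the $\alpha^2$-sized potential is handled by $\tau\gtrsim\sqrt{\|c\|_\infty}$ and that the coefficients are only Lipschitz after reflection is also correct and what the paper does.

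The gap is in your final sentence, where you pass from the half-ball doubling to the boundary doubling \eqref{LLL} ``using a trace inequality in one direction and \ldots a boundary Poincar\'e-type bound to recover the solid $L^2$ norm from the boundary $L^2$ norm.'' No such Poincar\'e-type bound exists: a function can be arbitrarily small on a boundary slice and $O(1)$ in the adjacent half-ball, so the solid norm cannot be controlled by a boundary trace through a local a priori estimate. The inequality you need --- bounding $\|\bar u\|_{L^2(\mathbb B^+_{r})}$ by $\|\bar u\|_{L^2({\bf B}_{R})}$ up to a factor $e^{C(|\alpha|+\sqrt{|\lambda|})}$ --- is a quantitative \emph{unique continuation from the Cauchy data on the boundary slice}, not an a priori estimate. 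This is exactly what the paper's Lemma~\ref{halfspace} supplies (a ``two half-balls and one lower-dimensional ball'' inequality, with the trace and normal derivative on ${\bf B}_{1/3}$ controlling the interior half-ball norm), and proving it requires the genuinely new ingredient of this section: a global Carleman estimate \emph{with boundary terms} (Proposition~\ref{Pro4}), using a weight $\psi = e^{sh}$ adapted to the flat boundary rather than a radial weight. Once Lemma~\ref{halfspace} is available, one combines it with the half-ball doubling (by contradiction) to get the lower bound $\|\bar u\|_{L^2({\bf B}_{1/5})}\geq e^{-C(|\alpha|+\sqrt{|\lambda|})}$ under the normalization $\|\bar u\|_{L^2(\mathbb B^+_{1/2})}=1$, and the matching upper bound $\|\bar u\|_{L^2({\bf B}_{1/4})}\leq e^{C(|\alpha|+\sqrt{|\lambda|})}$ via the Hardy trace inequality and Caccioppoli; the ratio gives \eqref{LLL}. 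Your sketch omits the entire propagation-of-smallness mechanism and the boundary Carleman estimate that drives it, which is the core of the theorem.

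A secondary point: the intermediate lemma gives $H^1({\bf B}_{1/3})$ control, and converting this to $L^2({\bf B}_{1/5})$ control of $u$ alone requires the interpolation inequality \eqref{interpo} together with a Caccioppoli estimate for second derivatives. Your proposal does not mention how the tangential gradient on the boundary would be dispensed with; this is not automatic and is handled in the paper by an $\epsilon$-optimization in \eqref{interpo}.
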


To obtain (\ref{LLL}), we prove a new quantitative propagation of smallness lemma (i.e. Lemma \ref{halfspace}) with possible large $|\alpha|$ or $|\lambda|$, which is based on a new and novel global quantitative Carleman estimates with boundary terms (i.e. Proposition \ref{Pro4}). A direct consequence of Theorem \ref{th3} is the following vanishing order estimates.
\begin{corollary}
Let $u$ be the Robin eigenfunction in (\ref{robin}). Then the vanishing order of solution $u$ on $\partial\Omega$ is everywhere less than
$C(|\alpha|+\sqrt{|\lambda|})$, where $C$ depends only the smooth domain $\Omega$.
\label{cor2}
\end{corollary}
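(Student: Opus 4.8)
The plan is to deduce the vanishing order estimate directly from the boundary doubling inequality \eqref{LLL} of Theorem \ref{th3}, following the standard argument that converts a doubling inequality into a frequency/vanishing-order bound. First I would fix a point $x_0\in\partial\Omega$ and recall what "vanishing order of $u$ on $\partial\Omega$ at $x_0$" means: it is the largest integer $m$ (or the supremum of $\tau$) such that $u(x)=O(|x-x_0|^{m})$ as $x\to x_0$ within $\partial\Omega$, equivalently the order of the first nonvanishing term in the expansion of the restriction $u|_{\partial\Omega}$; since balls here are intrinsic balls on $\partial\Omega$, the quantity controlled by \eqref{LLL} is exactly the growth of $u$ measured in these balls. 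The key elementary observation is that if $u$ vanishes to order $m$ at $x_0$, then there is a constant depending on $x_0$ and $u$ such that for small $r$ one has a lower bound $\|u\|_{L^2(\mathbb B_{2r}(x_0))}\geq c\, r^{-?}\|u\|_{L^2(\mathbb B_r(x_0))}$ is \emph{not} what we want — rather, high vanishing order forces the doubling ratio over many dyadic scales to be large, which contradicts \eqref{LLL}.

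Concretely, the second step is the iteration argument. Suppose $u$ vanishes at $x_0$ to order at least $m$. Then for each radius $r$ with $0<r<r_0$ one has
\begin{equation*}
\frac{\|u\|_{L^2(\mathbb B_{2r}(x_0))}}{\|u\|_{L^2(\mathbb B_{r}(x_0))}}\;\geq\; c_0\, 2^{\,m}
\end{equation*}
for some structural constant $c_0>0$; this follows by comparing $u$ near $x_0$ with its leading homogeneous part (a nonzero polynomial of degree $m$ in suitable boundary coordinates) and using that the error term is of order $|x-x_0|^{m+1}$, so that on a sufficiently small but fixed-proportion annulus the leading part dominates. Combining this lower bound with the upper bound $e^{C(|\alpha|+\sqrt{|\lambda|})}$ from \eqref{LLL} yields $2^{m}\leq C' e^{C(|\alpha|+\sqrt{|\lambda|})}$, i.e. $m\leq C''(|\alpha|+\sqrt{|\lambda|})$ after absorbing constants and taking logarithms. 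Since $x_0\in\partial\Omega$ was arbitrary and all constants came only from $\Omega$ (through $r_0$ and $C$ in Theorem \ref{th3}), this is the claimed everywhere-bound on the vanishing order.

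The only genuinely delicate point is the passage from the doubling inequality to the vanishing-order lower bound on a single scale, i.e. justifying that the $L^2$ norm ratio over one dyadic step is at least comparable to $2^{m}$ when the vanishing order is $m$. For eigenfunctions of an elliptic equation this is classical once one knows $u$ admits an asymptotic expansion at $x_0$ with a nonzero leading homogeneous term; here, because $u$ restricted to $\partial\Omega$ satisfies (via the Robin condition and elliptic regularity up to the analytic boundary) an equation to which the unique continuation / Almgren-type monotonicity machinery applies, such an expansion is available, and the leading term is a nonzero spherical harmonic of degree $m$ in geodesic normal coordinates on $\partial\Omega$. I expect this to be routine given the tools already developed in the paper (in particular the Carleman estimates underlying Theorem \ref{th3}), so the corollary follows with essentially no new work beyond bookkeeping of constants; I would therefore present it as a short remark-style proof that cites the standard doubling-implies-vanishing-order lemma and inserts the exponent $C(|\alpha|+\sqrt{|\lambda|})$ from \eqref{LLL}.
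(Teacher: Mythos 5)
Your overall plan --- deduce the vanishing-order estimate directly from the boundary doubling inequality \eqref{LLL} of Theorem~\ref{th3} --- is the correct and essentially unique route, and indeed the paper presents Corollary~\ref{cor2} as ``a direct consequence'' of Theorem~\ref{th3} without writing out details. However, the particular mechanism you describe has a genuine gap. You propose to show that if $u$ vanishes to order $m$ at $x_0\in\partial\Omega$, then the \emph{single-scale} doubling ratio $\|u\|_{L^2(\mathbb B_{2r}(x_0))}/\|u\|_{L^2(\mathbb B_{r}(x_0))}$ is at least $c_0 2^m$ for each small $r$, and you justify this by appealing to an asymptotic expansion of $u|_{\partial\Omega}$ at $x_0$ with a nonzero leading homogeneous term. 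That expansion is not available here: the trace $u|_{\partial\Omega}$ does not itself solve an elliptic PDE on $\partial\Omega$, so there is no Almgren-type monotonicity or blow-up argument to invoke for it; and Corollary~\ref{cor2} is stated for a \emph{smooth} (not analytic) domain, so ``elliptic regularity up to the analytic boundary'' is not an available hypothesis. Without the expansion, vanishing to order $\geq m$ only yields an \emph{upper} bound $\|u\|_{L^2(\mathbb B_r(x_0))}\lesssim r^{m+(n-1)/2}$ on small balls; it does not give a lower bound on the doubling ratio at a fixed scale, which could be small if the vanishing order happens to be much larger than $m$.

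The fix is to replace the single-scale comparison with the standard multi-scale iteration, which requires nothing beyond the definition of vanishing order. Set $N=e^{C(|\alpha|+\sqrt{|\lambda|})}$ and iterate \eqref{LLL} $k$ times from radius $r_0$ down to $r=2^{-k}r_0$ to get $\|u\|_{L^2(\mathbb B_{r}(x_0))}\geq N^{-k}\|u\|_{L^2(\mathbb B_{r_0}(x_0))}$, i.e.\ $\|u\|_{L^2(\mathbb B_{r}(x_0))}\geq c\,(r/r_0)^{\log_2 N}$ for all small $r$. If the vanishing order of $u$ at $x_0$ on $\partial\Omega$ exceeded $\log_2 N$, then by definition $\sup_{\mathbb B_r(x_0)\cap\partial\Omega}|u|=o(r^{\log_2 N})$, whence $\|u\|_{L^2(\mathbb B_r(x_0))}=o(r^{\log_2 N})$ as $r\to 0$, contradicting the lower bound. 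Thus the vanishing order is at most $\log_2 N = C(|\alpha|+\sqrt{|\lambda|})/\ln 2$, which is the claimed estimate. This is in fact what the ``standard doubling-implies-vanishing-order lemma'' you cite says; no asymptotic expansion and no analyticity of $\partial\Omega$ are needed.
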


Let us give some comments on those aforementioned results.
\begin{remark}
The results in Theorem 1, 2, 4  actually hold for either $|\alpha|$ or $|\lambda|$ large. The results in Theorem 2, 4 are sharp, which can be observed from the balls. The general eigenvalue problem (\ref{robin}) includes Steklov eigenvalue problem as the special case. For the study of nodal sets and doubling estimates of Steklov eigenfunctions, see e.g. \cite{BL}, \cite{Z}, \cite{WZ}, \cite{SWZ}, \cite{Zh}, \cite{Zh1},  \cite{PST}, \cite{Zh2}, \cite{GR},  etc. Steklov eigenfunctions can be regarded as eigenfunctions of the Dirichlet-to-Neumann map on the boundary. Thus, global Fourier analysis techniques can be applied. However, those arguments seem not be used for the eigenvalue problem (\ref{robin}). Some new and novel global Carleman estimates are developed to obtain boundary doubling inequalities and boundary nodal sets.
The conclusions in Theorem \ref{th1} and 4 also hold for Robin eigenfunctions of Laplace-Beltrami operator on any smooth and compact Riemannian manifolds. The results in Theorem 2 and 3 are true for Neumann and Robin eigenfunctions on real analytic compact Riemannian manifolds.
\label{rre2}
\end{remark}

\begin{remark}
For Robin eigenvalue problems, the eigenvalue $\lambda$ depends on the parameter $\alpha$. It is interesting to study the asymptotic estimates of $\lambda$ with respect to $\alpha$. If $\alpha<0$, it has been shown in \cite{DK} that
$$ \lim_{\alpha\to -\infty}\frac{\lambda_k}{-\alpha^2}=1 $$
for every $k\geq 1$. Thus, the eigenvalue $|\lambda_k|$ and $\alpha^2$ grow at the same rate. In this case with $|\alpha|$ sufficiently large, we can replace the term $|\alpha|$ by $\sqrt{|\lambda|}$ in Theorem 1, 2, 4 and Corollary 1, 2.
\end{remark}

The organization of the article is as follows. Section
 2 is devoted to the transformation of the Robin eigenvalue problem to elliptic equations with the Neumann boundary conditions. The polar coordinates for the double manifold with Lipschitz metrics is also constructed. In section 3, using the local quantitative Carleman estimates, we establish
 some quantitative three-ball theorems.  Then we derive the doubling inequality on the double manifold, the polynomial growth of nodal sets for Robin eigenfunctions on smooth domains and the doubling inequality on the half balls. Section 4 is used to show the upper bounds for interior nodal sets for the Robin eigenfunction on real analytic domains. Section 5 is devoted to the boundary doubling inequality and nodal sets estimates on the real analytic boundary. In the section 6, we derive a new type of global quantitative Carleman estimates with boundary terms. In the appendix, we include the proof of three-ball theorem presented in section 3.
The letters $C$ and $C_i$ denote generic positive
constants that do not depend on $u$, and may vary from line to
line. In the paper, since we study the asymptotic properties for eigenfunctions, we assume that either $|\alpha|$ or $|\lambda|$ is sufficiently large.

\section{Preliminary }
In this section, we transform the Robin eigenvalue problem to elliptic equations with Neumann boundary conditions.
We want to move the  parameter $\alpha$ on the boundary into the coefficients in a second order elliptic equation. At first, we will transform the Robin eigenvalue problem to a Neumann boundary problem. Considering a small $\rho$-neighborhood of smooth $\partial\Omega$, let $$\Omega_\rho=\{x\in\Omega| \dist(x, \ \partial\Omega)<\rho\},$$ where $\dist(x, \ \partial\Omega)=d(x)$ is the distance function to the boundary $\partial\Omega$. Since the domain $\Omega$ is smooth, there exists some small $\rho_0$ depending only on $\Omega$ such that the distance function $d(x)\in C^\infty$ in $\Omega_\rho$ for $0<\rho<\rho_0$. If $x\in\partial \Omega$, it is known that
\begin{equation} \nabla d(x)=-\nu(x),
\end{equation} where $\nu(x)$ is an unit outer normal at $x$. Inspired by the construction in \cite{BL} for Steklov eigenfunctions, we introduce the following auxiliary function
\begin{equation}
\bar u(x)=e^{-\alpha d(x)} u(x) \quad \quad \mbox{for} \ x\in \Omega_\rho\cup\partial\Omega.
\label{baru}
\end{equation}

It is easy to check that $\bar u(x)$ satisfies the following second order elliptic equations in a neighborhood of $\Omega$
\begin{equation}
\left \{ \begin{array}{rll}
\triangle \bar u+2\alpha \nabla d(x)\cdot \nabla \bar u+ (\alpha \triangle d(x)+\alpha^2 |\nabla d(x)|^2  +\lambda) \bar u&=0 \quad \mbox{in} \ \Omega_\rho,   \medskip\\
\frac{\partial \bar u}{\partial \nu}&=0 \quad \mbox{on} \ \partial\Omega.
\end{array}
\right.
\label{trans}
\end{equation}

We use Fermi coordinates near the boundary to flatten the boundary. Let $0\in \partial\Omega$.  We can find a small constant $\rho>0$ so that there exists a map $(x', \ x_n)\in \partial \Omega\times [0, \ \rho)\to \Omega$ sending $(x', \ x_n)$ to the endpoint, $x\in \Omega$, with length $x_n$,  which starts at $x'\in \partial \Omega$ and is perpendicular to $\partial\Omega$. Such map is a local diffeomorphism. Note that $d(x)=x_n$ in the coordinates and $x'$ is the geodesic normal coordinates of $\partial\Omega$. The metric takes the form
$$ \sum^n_{i,j=1} g_{ij} dx^i dx^j= dx_n^2+ \sum^{n-1}_{i,j=1} g'_{ij}(x', x_n) dx^i dx^j,   $$
where  $g'_{ij}(x', x_n)$ is a Riemannian metric on $\partial \Omega$ depending smooth on $x_n\in [0, \ \rho).$ In a neighborhood of the boundary, the Laplace can be written as
\begin{align} \triangle =\sum^n_{i,j=1} g^{ij}\frac{\partial^2}{\partial x_i \partial x_j}+\sum^n_{i=1} q_i(x)\frac{\partial}{\partial x_i}
\label{newlap}
\end{align}
using local coordinates for $\partial \Omega$, where  $g^{ij}$ is the matrix with entries $(g^{ij})_{1<i\leq j<n-1}= (g'_{ij})^{-1}$ and $g^{nn}=1$ and $g^{nk}=g^{kn}=0$ for $k\not =n$, and $ q_i(x)\in C^\infty$.

In the local coordinates, we identify $\partial\Omega$ locally as $\{x_n=0\}$. The Fermi distance function from $0$ on a relatively open neighborhood $0$ in $\Omega$ is defined by
$$ \tilde{r}=\sqrt{x_1^2+\cdots+ x_{n-1}^2+ x_n^2}.$$
The Fermi exponential map at $0$, ${exp}_0$, which gives the Fermi coordinate system, is defined on a half space of $\mathbb R^n_+$.
 We choose a Fermi half-ball $\tilde{\mathbb B}^+_\delta(0)$ centered at origin at $\{x_n=0\}$ for $0<\delta<10 \rho_0$.  It is known that $\mathbb B_{\delta/2}(0)\cap \Omega \subset \tilde{\mathbb B}^+_\delta(0)\subset  {\mathbb B}_{2\delta}(0)\cap \Omega $, where $\mathbb B_\delta(0)$ is the ball centered at origin with radius $\delta$ in the Euclidean space. See e.g. the appendix A in \cite{LZ}.
For  ease of notation, we still write $\tilde{\mathbb B}^+_\delta(0)$ as $\mathbb B^+_{\delta}(0)$ . Then it follows from (\ref{trans}) that $\bar u$ satisfies the following equation in a neighborhood of the boundary
\begin{equation}
\left \{ \begin{array}{rll}
\triangle_g \bar u+ \bar b(x)\cdot \nabla \bar u+ \bar c(x) \bar u&=0 \quad \mbox{in} \ \mathbb B^+_\delta(0), \medskip \\
\frac{\partial \bar u}{\partial \nu}&=0 \quad \mbox{on} \ \{x_n=0\},
\end{array}
\right.
\label{half}
\end{equation}
where $g=(g_{ij})_{n\times n}$  is smooth in $\mathbb B_\delta^+$, and $\bar b(x)$ and $\bar c(x)$ satisfy
\begin{equation}
\left\{ \begin{array}{lll}
\|\bar {b}\|_{C^\infty(\mathbb B^+_\delta)}\leq C(|\alpha|+1), \medskip \\
\|\bar {c}\|_{C^\infty(\mathbb B^+_\delta)}\leq C(\alpha^2+|\lambda|)
\end{array}
\right.
\label{halfcon}
\end{equation}
with $C$ depending only on $\partial \Omega$.

 We also want to consider the eigenfunction globally on $\Omega$. As it is discussed that the distance function $d(x)=dist(x, \ \partial\Omega)$ is  smooth to the boundary $\partial\Omega$ in a small neighborhood $\Omega_\rho$ for some small $\rho$,  we make a smooth extension for $d(x)$ in the whole $\Omega$. Then we introduce a smooth function $l(x)$ such that $\varrho(x)$ defined as
\begin{equation}
\varrho(x)=\left \{\begin{array}{lll} d(x) \quad x\in {\Omega}_{\rho}, \medskip\nonumber\\
l(x) \quad x\in  \Omega\backslash\Omega_{\rho}
\end{array}
\right. \label{errorr}
\end{equation}
is a smooth function in the whole $\Omega$. Performing the similar procedure as before, we first transform the Robin eigenvalue problem to a Neumann boundary problem. Let
\begin{equation}
\bar u(x)=e^{-\alpha \varrho(x)}  u(x) \quad \quad \mbox{for} \ x\in \Omega.
\label{441}
\end{equation}
Then $\bar u(x)$ satisfies the following  Neumann boundary problem
\begin{equation}
\left \{ \begin{array}{rll}
\triangle \bar u+2\alpha \nabla \varrho(x)\cdot \nabla \bar u+ (\alpha \triangle \varrho(x)+\alpha^2 |\nabla \varrho(x)|^2  +\lambda) \bar u&=0, \quad \mbox{in} \ \Omega, \medskip\\
\frac{\partial \bar u}{\partial \nu}&=0 \quad \mbox{on} \ \partial\Omega.
\end{array}
\right.
\label{iff}
\end{equation}

We want to get rid of the boundary $\partial\Omega$ as well. We define a global double manifold $\tilde{\Omega}=\Omega\cup \Omega$. To extend $\bar u$ to be on the double manifold $\tilde{\Omega}$, we consider an even extension, that is $$\bar u\circ \pi= \bar u,$$ where $\pi: \tilde{\Omega}\to \tilde{\Omega}$ is a cononical involutive isometry which interchanges the two copies of  $\tilde{\Omega}$. Near the boundary $\partial \Omega$, the new metric $\tilde{g}$ on the double manifold $\tilde{\Omega}$ is Lipschitz continuous. To explain the metric $\tilde{g}$ is only Lipschitz near the boundary, we use Fermi coordinates with respect to the boundary as before.
 The differential structure of $\tilde{\Omega}$ near $\partial\Omega$ uses the Fermi coordinates in $g_{ij}$. So $x_n>0$ and $x_n<0$ define the two copies of $\Omega$. In these coordinates, $g^{nk}=0$ for $k\not =n$, there are no cross terms between $\partial_n$ and $\partial_{x_i}$. The metric $g_{ij}(x', |x_n|)$ is symmetric under $x_n\to -x_n$. Thus, it is Lipschitz continuous across $\partial \Omega$. Under the new metric $\tilde{g}$ on the double manifold, from the equations (\ref{iff}),  the new solution $\bar u$ satisfies  second order elliptic equations
\begin{align}
\triangle_{\tilde{g}} \bar u+ \tilde{b}(x) \cdot \nabla \bar u+ \tilde{c}(x)  \bar u&=0 \quad \mbox{in} \ \tilde{\Omega},
\label{back}
\end{align}
where $\tilde{b}$ and $\tilde{c}$ satisfy
\begin{equation}
\left\{ \begin{array}{lll}
\|\tilde{b}\|_{W^{1,\infty}}\leq C(|\alpha|+1), \medskip \\
\|\tilde{c}\|_{W^{1,\infty}}\leq C(\alpha^2+|\lambda|).
\end{array}
\right.
\label{back1}
\end{equation}

Now we deal with the second order elliptic equations with Lipschitz continuous coefficients. In order to apply Carleman estimates, we want to use polar coordinates. Following the strategy on the regularization for Lipschitz metric in \cite{AKS} by Aronszajn, Krzywicki and Szarski, we are still able to introduce a suitable geodesic normal coordinates. Readers who are familiar with such construction may skip the rest of the section. Without loss of generality, we consider the construction of normal coordinates at origin. Starting from a ball $\mathbb B_\delta$ in local coordinates, we introduce a ``radial" coordinate and a conformal change metric $\hat{g}_{ij}$. Let
 \begin{equation}
 r=r(x)=(\tilde{g}_{ij}(0)x_i x_j)^{\frac{1}{2}}
 \label{radial}
 \end{equation}
 and
\begin{equation}
\hat{g}_{ij}(x)= \tilde{g}_{ij}(x){\hat{\psi}}(x),
\end{equation}
where \begin{equation} {\hat{\psi}}(x)= \tilde{g}^{kl}(x)\frac{\partial r}{\partial x^k}\frac{\partial r}{\partial x^l}\end{equation} for $x\not=0$ and  $(\tilde{g}^{ij})=(\tilde{g}_{ij})^{-1}$ is the inverse matrix. In the whole paper, we adopt the Einstein notation. The summation over index is understood.
We assume the uniform ellipticity condition holds in $\mathbb B_\delta$ for
$$ \Lambda_1\|\xi\|^2 \leq \sum^n_{i,j=1} \tilde{g}_{ij}(x)\xi_i\xi_j\leq  \Lambda_2\|\xi\|^2 $$
for some positive constant $\Lambda_1$ and $\Lambda_2$ depending only on $\Omega$. Then $\hat{\psi}$ is bounded above and below satisfying
\begin{equation}
\frac{\Lambda_1}{\Lambda_2}\leq \hat{\psi}\leq \frac{\Lambda_2}{\Lambda_1}.
\end{equation}
We can also see that $\hat{\psi}$ is Lipschitz continuous. With these auxiliary quantities, the following replacement of geodesic polar coordinates are constructed in \cite{AKS}.
In the geodesic ball
$\hat{ \mathbb B}_{\hat {r}_0}=\{ x\in  \tilde{\Omega}| r(x)\leq \hat{r}_0\},$ the following properties hold: \medskip \\
(i) $\hat{g}_{ij}(x) $ is Lipschitz continuous;\medskip\\
(ii) $\hat{g}_{ij}(x) $ is uniformly elliptic with $ \frac{\Lambda_1^2}{ \Lambda_2 }\|\xi\|^2\leq \hat{g}_{ij}(x)\xi_i\xi_j\leq \frac{\Lambda_2^2}{ \Lambda_1}\|\xi\|^2.  $ \medskip\\
(iii) Let $\Sigma =\partial \hat{ \mathbb B}_{\hat {r}_0}$. We can parametrize $\hat{ \mathbb B}_{\hat {r}_0} \backslash \{0\}$ by the polar coordinate $r$ and $\theta$, with $r$ defined by (\ref{radial}) and $\theta=(\theta_1, \cdots \theta_{n-1})$ be the local coordinates on $\Sigma$. In these polar coordinates, the metric can be written as
\begin{equation}
\hat{g}_{ij}(x)  dx^i dx^j= dr^2+ r^2 \hat{\gamma}_{ij} d\theta^i d\theta^j
\end{equation}
with $\hat{\gamma}_{ij}=\frac{1}{r^2} \hat{g}_{kl}(x) \frac{\partial x^k}{\partial \theta^i}\frac{\partial x^l}{\partial \theta^j}$. \medskip\\
(iv) There exists a positive constant $M$ depending on $\tilde{g}_{ij}$ such that for any tangent vector $\xi_j\in T_{\theta}(\Sigma)$,
\begin{align}
|\frac{\partial \hat{\gamma}_{ij}(r, \theta)}{\partial r} \xi^i \xi^j|\leq M| \hat{\gamma}_{ij}(r, \theta)\xi^i \xi^j|.
\label{spheress}
\end{align}
Let $\hat{\gamma}=\det{ (\hat{\gamma}_{ij})}$. Then (\ref{spheress}) implies that
\begin{equation}
|\frac{\partial \ln \sqrt{\hat{\gamma}}}{\partial r}| \leq CM.
\end{equation}

The existence of the coordinates $(r, \ \theta)$ allows us to pass to ``geodesic polar coordinates".
In particular, $r(x)=(\tilde{g}_{ij}(0)x_i x_j)^{\frac{1}{2}}$ is the geodesic distance to the origin in the metric $\hat{g}_{ij}$.
In the new metric $\hat{g}_{ij}$, the  Laplace-Beltrami operator is $$\triangle_{\hat{g}}=\frac{1}{\sqrt{\hat{g} }} \frac{\partial}{\partial x_i}( \hat{g}^{ij}\sqrt{ \hat{g}} \frac{\partial}{\partial x_j}   ),$$ where $\hat{g}= \det(\hat{g}_{ij})$.  If $\bar u$ is a solution of (\ref{back}), then $\bar u$ is locally the solution of the equation
\begin{equation}
\triangle_{\hat{g}} \bar u +\hat{b}(x)\cdot\nabla \bar u+ \hat{c}(x) \bar u=0 \quad \mbox{in} \ \hat{\mathbb B}_{\hat{r}_0},
\label{target}
\end{equation}
where
\begin{equation}
\left\{ \begin{array}{lll}
&\hat{b}_i=\frac{2-n}{2\hat{\psi}^2} \tilde{g}^{ij} \frac{\partial \hat{\psi} }{\partial x_j}+\frac{1}{\hat{\psi}}\tilde{b}_i,   \medskip \\
&\hat{c}(x)=\frac{\tilde{c}(x)}{\hat{\psi}}.
\end{array}
\right.
\end{equation}
By the properties of $\hat{\psi}$, we can see $\hat{c}(x)$ is Lipschitz continuous. Since the term $\frac{2-n}{2\hat{\psi}^2} \tilde{g}^{ij} \frac{\partial \hat{\psi} }{\partial x_j}$ in $\hat{b}_i$ is only continuous and does not depend on either $\alpha$ or $\lambda$, it can be ignored in the future quantitative estimates for doubling inequality or nodal sets. The major term $\frac{1}{\hat{\psi}}\tilde{b}_i$ is Lipschitz continuous. From the conditions in (\ref{back1}), we still write the conditions for $\hat{b}$ and $\hat{c}$ as
\begin{equation}
\left\{ \begin{array}{lll}
\|\hat {b}\|_{W^{1,\infty}(\hat{\mathbb B}_{\hat{r}_0})}\leq C (|\alpha|+1), \medskip \\
\|\hat { c}\|_{W^{1,\infty}(\hat{\mathbb B}_{\hat{r}_0})}\leq C (\alpha^2+|\lambda|).
\end{array}
\right.
\label{targetcon}
\end{equation}
For simplicity, we may write $\triangle_{\hat{g}}$ or $\triangle_{{g}}$ as $\triangle$ if the metric is understood. Since the geodesic balls or half balls under different metrics are comparable, we write all as $\mathbb B_r(x)$ or $\mathbb B^+_r(x)$ centered at $x$ with radius $r$.

\section{ Interior doubling inequality and interior nodal sets on smooth domains}

Let $r=r(y)$ be the Riemannian distance from origin to $y$.
Our major tools to get the three-ball theorem and doubling inequality are the quantitative
 Carleman estimates. Carleman estimates are weighted integral inequalities with a weight function $e^{\tau\psi}$, where $\psi$ usually satisfies some convex condition. We construct the weight function $\psi$ as follows.
 Set $$\psi(y)=-g(\ln r(y)),$$ where $g(t)=t+\log t^2$ for $-\infty<t <T_0$, and $T_0$ is negative with $|T_0|$ large enough. One can check that
 \begin{equation}
 \lim_{t \to -\infty}-e^{-t} g''(t)=\infty \quad \mbox{and} \quad \lim_{t \to -\infty} g'(t)=1.
 \end{equation}
Define
\begin{equation}
\psi_\tau(y)=e^{\tau \psi(y)}.
\end{equation}
We state the following quantitative Carleman estimates. The similar Carleman estimates with lower bound of the parameter $\tau$ has been obtained in e.g. \cite{DF}, \cite{BC}, \cite{Zh2}. Interested readers may refer to them for the proof of the following proposition.
\begin{proposition}
There exist positive constants $C_1$, $C_0$ and small $r_0$, such that for $v\in C^{\infty}_{0}(
\mathbb B_{r_0} \backslash \mathbb B_{\rho}),$  and
$$\tau>C_1(1+|\alpha|+\sqrt{|\lambda|}),$$
one has
\begin{align}
C_0\|r^2 \psi_\tau\big(\triangle v +\hat{b}(y)\cdot\nabla v+ \hat{c}(y) v\big)\|^2&\geq  \tau^3\|\psi_\tau {(\log
r)}^{-1}
u \|^2
+\tau\|r \psi_\tau {(\log r)}^{-1}\nabla v \|^2\nonumber \medskip \\ &+ \tau \rho
\|r^{-\frac{1}{2}}\psi_\tau
 v \|^2.
\label{carl} \end{align} \label{pro1}
\end{proposition}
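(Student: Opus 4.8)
The plan is to follow the standard route to Donnelly--Fefferman type Carleman estimates in geodesic polar coordinates, as in \cite{DF}, \cite{BC}, \cite{Zh2}, while carrying the large parameters $|\alpha|$ and $\sqrt{|\lambda|}$ explicitly through every step. First I would pass to the polar coordinates $(r,\theta)$ of Section 2 and write the operator in (\ref{target}) as $\triangle_{\hat g}=\partial_r^2+\tfrac{1}{r}a(r,\theta)\partial_r+\tfrac{1}{r^2}\triangle_{\hat\gamma(r,\theta)}$ modulo lower order, where $a$ is bounded and Lipschitz in $r$ with $|\partial_r a|\lesssim M$ by (\ref{spheress}); then introduce the logarithmic radial variable $t=\ln r\in(-\infty,T_0)$ and perform the usual normalizing substitution so that the principal part becomes $\partial_t^2+\triangle_{\hat\gamma}+O(1)$ on the half-cylinder $(-\infty,T_0)\times\Sigma$, the lower-order terms $\hat b\cdot\nabla$ and $\hat c$ picking up coefficients controlled by (\ref{targetcon}).

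Next I would conjugate by the weight, $w_\tau=e^{\tau\psi}w=e^{-\tau g(t)}w$, and split the resulting operator $L_\tau$ as $L_\tau=S_\tau+K_\tau+R_\tau$, with $S_\tau\sim\partial_t^2+\triangle_{\hat\gamma}+\tau^2(g'(t))^2$ formally self-adjoint on $L^2(dt\,d\theta)$, $K_\tau\sim 2\tau g'(t)\partial_t+\tau g''(t)$ skew-adjoint, and $R_\tau$ the remainder gathering the Lipschitz metric errors, the non-smooth part of the conformal factor $\hat\psi$, and the terms from $\hat b$ and $\hat c$. Expanding $\|L_\tau w_\tau\|^2\ge\|S_\tau w_\tau\|^2+\|K_\tau w_\tau\|^2+\langle[S_\tau,K_\tau]w_\tau,w_\tau\rangle-C\|R_\tau w_\tau\|\big(\|S_\tau w_\tau\|+\|K_\tau w_\tau\|\big)$, the heart of the matter is the positivity of $[S_\tau,K_\tau]$ modulo lower order: integrating by parts on the cylinder produces a zeroth-order form of size $\tau^3\big(-g'(t)^2g''(t)\big)$ and gradient forms of size $\tau\big(-g''(t)\big)$ in both $\partial_t$ and $\nabla_\theta$. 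This is precisely where the two properties of $g$ are used --- $\lim_{t\to-\infty}g'(t)=1$ keeps $g'$ bounded away from $0$ and $\infty$, while $\lim_{t\to-\infty}-e^{-t}g''(t)=\infty$, upon translating back through $t=\ln r$, $-g''(t)=2(\log r)^{-2}$, $dt\,d\theta\simeq r^{-n}\,dx$ and $e^{-\tau g}=\psi_\tau$, turns these two forms into exactly $\tau^3\|\psi_\tau(\log r)^{-1}v\|^2$ and $\tau\|r\psi_\tau(\log r)^{-1}\nabla v\|^2$ (the radial and angular pieces recombining into the full gradient). The remaining term $\tau\rho\|r^{-1/2}\psi_\tau v\|^2$ is the boundary contribution at $t=\ln\rho$ in that integration by parts; since $v\in C^\infty_0(\mathbb B_{r_0}\setminus\mathbb B_\rho)$ there is nothing at $r=r_0$, and for $\tau$ large the $r=\rho$ term has the stated sign and size.

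Finally I would absorb $R_\tau$. Its contribution is bounded, via Cauchy--Schwarz and (\ref{targetcon}), by $\|R_\tau w_\tau\|\lesssim\big(M+|\alpha|+1\big)\|\nabla_{t,\theta}w_\tau\|+\big(M+\alpha^2+|\lambda|\big)\|w_\tau\|$ in the weighted variables. The gradient errors, carrying the factor $|\alpha|+1$, are absorbed into $\tau\|r\psi_\tau(\log r)^{-1}\nabla v\|^2$ once $\tau>C_1(1+|\alpha|)$ and $r_0,\rho$ are small; the zeroth-order error, carrying $\alpha^2+|\lambda|$, is absorbed into $\tau^3\|\psi_\tau(\log r)^{-1}v\|^2$, where the weight $r^2$ in front of the operator on the left of (\ref{carl}) together with the smallness of $r_0$ makes $r^2\hat c$ small relative to $\tau$ on $\supp v$ --- this is the step that forces $\alpha^2+|\lambda|\le(|\alpha|+\sqrt{|\lambda|})^2$ and the threshold $\tau>C_1(1+|\alpha|+\sqrt{|\lambda|})$ to be exactly what is needed, so that $\sqrt{|\lambda|}$ rather than $|\lambda|$ is the governing scale. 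Taking $C_1$ large and $r_0,\rho$ small reduces every error to at most half of its companion good term, and (\ref{carl}) follows.

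\textbf{Main obstacle.} I expect the two genuinely delicate points to be: (a) extracting the commutator positivity with \emph{exactly} the weights $(\log r)^{-1}$ and the $\tau^3/\tau$ split while the metric $\hat g_{ij}$, the factor $\hat\psi$, and $\hat c$ are only Lipschitz --- this is the sub-elliptic core, and it must use the two limits for $g'$ and $e^{-t}g''$ in the right places; and (b) the bookkeeping of the absorption of the large zeroth-order coefficient $\hat c\sim\alpha^2+|\lambda|$, which is what couples the admissible range of $\tau$ to $(|\alpha|+\sqrt{|\lambda|})$ and fixes the smallness required of $r_0$. Since, as the excerpt observes, versions of this estimate already appear in \cite{DF}, \cite{BC}, \cite{Zh2}, the new content is essentially this parameter tracking.
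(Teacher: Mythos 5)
The paper does not actually give a proof of Proposition~\ref{pro1}; it explicitly refers the reader to \cite{DF}, \cite{BC}, \cite{Zh2} for it. Your route --- polar coordinates, the logarithmic radial variable $t=\ln r$, conjugation by $e^{\tau\psi}$, the symmetric/skew decomposition, positivity of the commutator, and absorption of the Lipschitz and lower-order errors for $\tau$ large --- is indeed the route taken in those references, so you have correctly identified the standard argument. However, two places in your outline do not survive scrutiny.

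First, your explanation of the term $\tau\rho\|r^{-1/2}\psi_\tau v\|^2$ as a boundary contribution at $r=\rho$ is incorrect. The hypothesis $v\in C^\infty_0(\mathbb B_{r_0}\setminus\mathbb B_\rho)$ means $v$ has compact support in the open annulus, so $v$ and all its derivatives vanish near \emph{both} spheres $r=\rho$ and $r=r_0$; no boundary terms appear in any integration by parts. (The cut-off $\eta$ used in the proof of Lemma~\ref{lemm1} confirms this reading: it vanishes near both the inner and outer sphere.) The factor $\rho$ enters from the \emph{bulk}: on the support one has $\rho\,r^{-1}\le 1$, so a zeroth-order good term of the form $\tau\|\cdot\|^2$ produced in the commutator computation can be down-graded to $\tau\rho\|r^{-1/2}\psi_\tau v\|^2$ pointwise, and it is this weakened form, uniform in $\rho$, that is then used in Proposition~\ref{pro2}.

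Second, the absorption of the zeroth-order coefficient is described too loosely. After the $r^2$ weighting, on $\supp v$ one has $|r^2\hat c|\lesssim r_0^2(\alpha^2+|\lambda|)\lesssim r_0^2\,\tau^2$ once $\tau\ge C_1(|\alpha|+\sqrt{|\lambda|})$ --- that is, $r^2\hat c$ is small relative to $\tau^2$, \emph{not} ``small relative to $\tau$'' as you write; no choice of fixed $r_0$ makes $r_0^2(\alpha^2+|\lambda|)\le\tau$ uniformly in $\alpha,\lambda$. Consequently the crude step $\|L_\tau w\|^2\ge\ldots-C\|R_\tau w\|(\|S_\tau w\|+\|K_\tau w\|)$ followed by Young and comparison against only the commutator gain $\tau^3(\log r)^{-2}\|w\|^2$ does not close, since $\|R_\tau w\|^2$ carries a $\tau^4$-sized zeroth-order contribution. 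The actual absorption must retain part of $\|S_\tau w\|^2$: expanding $\|S_\tau w\|^2=\|\partial_t^2 w+\triangle_\theta w\|^2+\tau^4\|(g')^2w\|^2+2\tau^2\!\int(g')^2|\nabla_{t,\theta}w|^2+\ldots$ one pairs $\hat c w$ directly against the $\tau^2(g')^2w$ and $-\partial_t^2-\triangle_\theta$ pieces of $S_\tau$, and then the smallness of $r_0^2$ relative to $1$ (not relative to $\tau^{-1}$) is exactly what closes the estimate. Your high-level outline is otherwise consistent with \cite{DF}, \cite{BC}, \cite{Zh2}, but these two steps need to be corrected and made precise.
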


The $\|\cdot\|_r$ or $\|\cdot\|$ norm in the whole paper denotes the $L^2$ norm over $\mathbb B_r(0)$ if
not explicitly stated.  Specifically, $\|\cdot\|_{\mathbb B_r(y))}$  for short denotes the $L^2$ norm on the ball $\mathbb B_r(y)$. Thanks to the quantitative Carleman estimates, it is a standard way to derive a quantitative three-ball theorem. Let
 $\bar u$ be the solutions of the second order elliptic equations (\ref{target}). We apply such Carleman estimates  with $v=\eta \bar u $, where $\eta$ is an appropriate smooth cut-off function, and then select an appropriate choice of the parameter $\tau$.  The statement of the quantitative three-ball theorem is  as follows.

\begin{lemma}
There exist positive constants $\bar r_0$, $C$ which depend only on $\Omega$ and $0<\beta<1$  such that, for any $0<R<\bar r_0$,  the solutions $\bar u$ of (\ref{target}) satisfy
\begin{equation}
\|\bar u \|_{\mathbb B_{2R}(x_0)}\leq e^{C(|\alpha|+\sqrt{|\lambda|})} \|\bar u \|^{\beta}_{\mathbb B_R(x_0)} \|\bar u  \|^{1-\beta}_{\mathbb B_{3R}(x_0)}
\label{quanti}
\end{equation}
\label{lemm1}
for any $x_0\in\tilde{\Omega}$.
\end{lemma}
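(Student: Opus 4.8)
\smallskip
\noindent\textbf{Proof plan (Lemma \ref{lemm1}).}
The plan is to deduce \eqref{quanti} from the Carleman estimate \eqref{carl} by the usual cut-off argument, with one design choice that is essential for obtaining a genuine Hadamard-type (power) interpolation rather than a merely logarithmic gain: the plateau of the cut-off will be made to extend a fixed amount \emph{beyond} the radius $2R$ at which $\bar u$ is measured, so that the commutator error produced at the outer edge of the cut-off is weighted by a \emph{strictly smaller} value of $\psi_\tau$ than the region $\mathbb B_{2R}(x_0)$. After translating we take $x_0=0$ and choose $\bar r_0$ small enough that the support of the cut-off $\eta$ constructed below lies inside $\mathbb B_{r_0}\setminus\mathbb B_{R/8}$. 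Fix $\eta\in C_0^\infty$ with $\eta\equiv 1$ on $\mathbb B_{5R/2}\setminus\mathbb B_{R/2}$, $\eta\equiv 0$ on $\mathbb B_{R/8}$ and on the complement of $\mathbb B_{11R/4}$, with $|\nabla\eta|+R|\triangle\eta|\lesssim R^{-1}$ on each of the two transition annuli $\mathbb B_{R/2}\setminus\mathbb B_{R/8}$ and $\mathbb B_{11R/4}\setminus\mathbb B_{5R/2}$. Apply \eqref{carl} (with the parameter $\rho$ there equal to $R/8$) to $v=\eta\bar u$ with $\tau>C_1(1+\Lambda)$, where $\Lambda:=|\alpha|+\sqrt{|\lambda|}$. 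Since $\bar u$ solves \eqref{target}, $F:=\triangle v+\hat b\cdot\nabla v+\hat c v=2\nabla\eta\cdot\nabla\bar u+(\triangle\eta+\hat b\cdot\nabla\eta)\bar u$ is supported on the two transition annuli only.

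I would then estimate the two sides of \eqref{carl}. On the outer transition annulus one has $r\lesssim R$, $\psi_\tau\le\psi_\tau(5R/2)$ (as $\psi$ is decreasing in $r$ in the relevant range), and, using \eqref{targetcon} together with $\alpha^2+|\lambda|\lesssim\Lambda^2$, $|F|\lesssim R^{-1}|\nabla\bar u|+(R^{-2}+\Lambda R^{-1})|\bar u|$; combining this with the Caccioppoli inequality $\int|\nabla\bar u|^2\lesssim(R^{-2}+\Lambda^2)\int|\bar u|^2$ for \eqref{target} on a slightly enlarged annulus contained in $\mathbb B_{3R}$, the powers of $R^{-1}$ cancel the prefactor $r^2$ and one obtains $\|r^2\psi_\tau F\|^2_{\mathrm{out}}\lesssim\Lambda^2\,\psi_\tau(5R/2)^2\,\|\bar u\|^2_{\mathbb B_{3R}}$. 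The inner transition annulus is handled the same way with $R/8$ in place of $R$ and the enlarged annulus inside $\mathbb B_R$, giving $\|r^2\psi_\tau F\|^2_{\mathrm{in}}\lesssim\Lambda^2\,\psi_\tau(R/8)^2\,\|\bar u\|^2_{\mathbb B_R}$. On the right side of \eqref{carl} I keep only the term $\tau\rho\|r^{-1/2}\psi_\tau v\|^2$; since $\eta\equiv1$ on $\mathbb B_{5R/2}\supset\mathbb B_{2R}$ and $\psi$ is decreasing, this is $\gtrsim\tau\rho R^{-1}\psi_\tau(2R)^2\|\bar u\|^2_{\mathbb B_{2R}\setminus\mathbb B_R}\gtrsim\tau\,\psi_\tau(2R)^2\,\|\bar u\|^2_{\mathbb B_{2R}\setminus\mathbb B_R}$ with $\rho=R/8$. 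Inserting these bounds into \eqref{carl}, dividing by $\psi_\tau(2R)^2$, and writing $A:=\psi(R/8)-\psi(2R)>0$, $B:=\psi(2R)-\psi(5R/2)>0$ (so $\psi_\tau(R/8)/\psi_\tau(2R)=e^{\tau A}$ and $\psi_\tau(5R/2)/\psi_\tau(2R)=e^{-\tau B}$), one arrives at the balancing inequality
\[
\|\bar u\|^2_{\mathbb B_{2R}\setminus\mathbb B_R}\;\lesssim\;\tau^{-1}\Lambda^2\Bigl(e^{2\tau A}\,\|\bar u\|^2_{\mathbb B_R}+e^{-2\tau B}\,\|\bar u\|^2_{\mathbb B_{3R}}\Bigr),\qquad\text{valid for all }\tau\ge C_1(1+\Lambda).
\]

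It then remains to optimize in $\tau$. If $\|\bar u\|_{\mathbb B_{3R}}\le e^{C_1(1+\Lambda)(A+B)}\|\bar u\|_{\mathbb B_R}$, then $\|\bar u\|_{\mathbb B_{2R}}\le\|\bar u\|_{\mathbb B_{3R}}\le e^{C\Lambda}\|\bar u\|_{\mathbb B_R}\le e^{C\Lambda}\|\bar u\|^{\beta}_{\mathbb B_R}\|\bar u\|^{1-\beta}_{\mathbb B_{3R}}$ for any $\beta\in(0,1)$, using $\|\bar u\|_{\mathbb B_R}\le\|\bar u\|_{\mathbb B_{3R}}$. Otherwise the value $\tau=(A+B)^{-1}\log(\|\bar u\|_{\mathbb B_{3R}}/\|\bar u\|_{\mathbb B_R})$ exceeds $C_1(1+\Lambda)$ and equalizes the two terms on the right, yielding $\|\bar u\|^2_{\mathbb B_{2R}\setminus\mathbb B_R}\lesssim\Lambda^2\|\bar u\|^{2\beta}_{\mathbb B_R}\|\bar u\|^{2(1-\beta)}_{\mathbb B_{3R}}$ with $\beta:=B/(A+B)\in(0,1)$; adding $\|\bar u\|_{\mathbb B_R}\le\|\bar u\|^{\beta}_{\mathbb B_R}\|\bar u\|^{1-\beta}_{\mathbb B_{3R}}$ and absorbing the factor $1+\Lambda$ into $e^{C\Lambda}$ (licit in the regime of $\Lambda$ large) gives \eqref{quanti}. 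Finally, from $\psi(r)=-g(\log r)$ one checks that $A$ and $B$, hence $\beta$, stay in a fixed compact subinterval of $(0,1)$ as $R$ varies over $(0,\bar r_0)$; since $\|\bar u\|_{\mathbb B_R}\le\|\bar u\|_{\mathbb B_{3R}}$ one may freely replace $\beta$ by any smaller value, so a single $\beta\in(0,1)$ serves for all $R$.

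The step I expect to be the crux is the geometric set-up of $\eta$ in the first paragraph together with the weight bookkeeping in the second: it is precisely the gap between the plateau radius $5R/2$ and the measurement radius $2R$ that forces the $\mathbb B_{3R}$ term in the balancing inequality to carry the decaying factor $e^{-2\tau B}$, which is what turns the $\tau$-optimization into the genuine interpolation \eqref{quanti} rather than a mere logarithmic improvement. The other point needing care is that all constants come out of the form $e^{C\Lambda}$ with $C=C(\Omega)$: this relies on $\alpha^2+|\lambda|\lesssim(|\alpha|+\sqrt{|\lambda|})^2$ so that the Caccioppoli and commutator factors are only polynomial in $\Lambda$ (with no hidden powers of $R^{-1}$ surviving once they cancel against $r^2$), and on the dichotomy above to guarantee that the optimal choice of $\tau$ never violates the Carleman lower bound $\tau\ge C_1(1+\Lambda)$.
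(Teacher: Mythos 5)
Your proposal is correct and follows essentially the same route as the paper's proof: apply the Carleman estimate \eqref{carl} to $\eta\bar u$ with a cut-off whose plateau extends a fixed fraction beyond $2R$, control the commutator terms on the two transition annuli via Caccioppoli and the coefficient bounds \eqref{targetcon}, and then optimize in $\tau$ subject to the constraint $\tau\gtrsim|\alpha|+\sqrt{|\lambda|}$. The only (inessential) differences are your specific choice of radii ($R/8,R/2,5R/2,11R/4$ vs.\ the paper's $R/4,3R/4,9R/4,5R/2$) and your explicit dichotomy at the $\tau$-optimization step, whereas the paper sidesteps the dichotomy by taking $\tau=C(|\alpha|+\sqrt{|\lambda|})+\beta_2^{-1}\log(2C(|\alpha|+\sqrt{|\lambda|})\|\bar u\|_{3R}/\|\bar u\|_{2R})$ directly, which is automatically $\ge C(|\alpha|+\sqrt{|\lambda|})$ since $\|\bar u\|_{3R}\ge\|\bar u\|_{2R}$.
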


Since the proof of three-ball theorem is kind of standard by the applications of quantitative Carleman estimates (\ref{carl}). For the completeness of the presentation, we leave the proof in the appendix.

Let $\|u\|_{L^2({\Omega})}=1$. Because of the even extension, we may write $$\|\bar u\|_{L^2(\tilde{\Omega})}=2.$$ Set $\bar x$ be the point where
$$ \|\bar u\|_{L^2(\mathbb B_{\hat{r}_0}(\bar x))}=\max_{x\in \tilde{\Omega}} \|\bar u\|_{L^2(\mathbb B_{\hat{r}_0}(x))}  $$
for some $0<\hat{r}_0< \frac{\bar r_0}{8}$.
The compactness of $\tilde{\Omega}$ implies that  $$\|\bar u\|_{L^2(\mathbb B_{\hat{r}_0}(\bar x))}\geq C_{\hat{r}_0}$$ for some $C_{\hat{r}_0}$ depending on $\tilde{\Omega}$ and $\hat{r}_0$. From the quantitative three-ball inequality (\ref{quanti}), at any point $x\in \Omega$, one has
\begin{equation}
 \|\bar u\|_{L^2(\mathbb B_{{\hat{r}_0}/2}(x))}\geq e^{-C(|\alpha|+\sqrt{|\lambda|})} \|\bar u\|^{\frac{1}{\beta}}_{L^2(\mathbb B_{{\hat{r}_0}}(x))}.
\label{use}
\end{equation}
Let $l$ be a geodesic curve between $\hat{x}$ and $\bar x$, where $\hat{x}$ is any point in $\tilde{\Omega}$. Define $x_0=\hat{x}, \cdots, x_m=\bar x$ such that $x_i\in l$ and $\mathbb B_{\frac{\hat{r}_0}{2}}(x_{i+1})\subset \mathbb B_{{\hat{r}_0}}(x_{i})$ for $i$ from $0$ to $m-1$. The number of $m$  depends only on $\diam (\tilde{\Omega})$ and $\hat{r}_0$. The properties of $(x_i)_{1\leq i\leq m}$ and the inequality (\ref{use}) imply that
\begin{align}
 \|\bar u\|_{L^2(\mathbb B_{{\hat{r}_0}/2}(x_i))}\geq e^{-C(|\alpha|+\sqrt{|\lambda|})} \|\bar u\|^{\frac{1}{\beta}}_{L^2(\mathbb B_{{\hat{r}_0}/2}(x_{i+1}))}.
\end{align}
Iterating the argument to get to $\bar x$, we obtain that
\begin{align}
 \|\bar u\|_{L^2(\mathbb B_{{\hat{r}_0}/2}(\hat{x}))}&\geq e^{-C_{\hat{r}_0}(|\alpha|+\sqrt{|\lambda|})} C_{\hat{r}_0}^{\frac{1}{\beta^m}} \nonumber \\
 &\geq e^{-C_{\hat{r}_0}(|\alpha|+\sqrt{|\lambda|})} \|\bar u\|_{L^2(\tilde{\Omega})}.
 \label{useful}
\end{align}
Let $A_{R, \ 2R}=(\mathbb B_{2R}(x_0)\backslash \mathbb B_{R}(x_0))$ for any $x_0\in \tilde{\Omega}$. Then there exists $\mathbb B_{{\hat{r}_0}/2}(\hat{x})\subset A_{ \hat{r}_0, \ 2\hat{r}_0}$ for some $\hat{x}\in  A_{2\hat{r}_0, \  \hat{r}_0}$. Thus, by (\ref{useful}),
\begin{equation}
\| \bar u\|_{L^2(A_{\hat{r}_0, \ 2\hat{r}_0} )}\geq e^{-C_{\hat{r}_0}(|\alpha|+\sqrt{|\lambda|})} \| \bar u\|_{L^2(\tilde{{\Omega}})}.
\label{annu1}
\end{equation}

With aid of the quantitative Carleman estimates (\ref{carl}) and the inequality (\ref{annu1}), using the argument as the proof of Lemma \ref{lemm1},  we are ready to derive the doubling inequality as follows.
\begin{proposition}
Let $\bar u$ be the solution of (\ref{target}) satisfying the condition (\ref{targetcon}). There exists a positive constant $C$ depending only on $\tilde{\Omega}$ such that the doubling inequality holds
\begin{equation}
\|\bar u\|_{L^2(\mathbb B_{2r}(x))}\leq e^{C(|\alpha|+\sqrt{|\lambda|})}\|\bar u\|_{L^2(\mathbb B_{r}(x))}
\label{goback}
\end{equation}
for any $x\in \tilde{\Omega}$.
\label{pro2}
\end{proposition}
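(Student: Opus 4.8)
The plan is to upgrade the quantitative three‐balls inequality of Lemma \ref{lemm1} into the doubling inequality \eqref{goback} by a standard propagation-of-smallness argument along chains of balls, using the global lower bound \eqref{useful}--\eqref{annu1} as the "anchor" that converts the product-form estimate into an honest doubling bound. First I would fix $x\in\tilde{\Omega}$ and, after rescaling, reduce to proving the estimate for a fixed radius comparable to $\hat r_0$; the general radius $0<r<r_0$ then follows by iterating the fixed-scale doubling inequality finitely many times (the number of iterations depending only on $\tilde\Omega$, which only changes the constant $C$ in the exponent). So the heart of the matter is to bound $\|\bar u\|_{L^2(\mathbb B_{2\hat r_0}(x))}$ by $e^{C(|\alpha|+\sqrt{|\lambda|})}\|\bar u\|_{L^2(\mathbb B_{\hat r_0}(x))}$.

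The key steps, in order, are as follows. (1) Apply the Carleman estimate \eqref{carl} to $v=\eta\bar u$ with a cutoff $\eta$ supported in an annulus around $x$ that is $1$ on a smaller annulus, exactly as in the proof of Lemma \ref{lemm1}, but now choose the geometry so that one of the annuli where $e^{\tau\psi}$ is large sits inside $\mathbb B_{\hat r_0}(x)$ and the "error" annulus where $e^{\tau\psi}$ is small sits near $\partial\mathbb B_{2\hat r_0}(x)$. Using the Caccioppoli inequality \eqref{Cacc} to absorb the gradient terms, this yields, for all admissible $\tau>C_1(1+|\alpha|+\sqrt{|\lambda|})$, an inequality of the shape
\[
\|\bar u\|_{L^2(\mathbb B_{2\hat r_0}(x))}\le C(|\alpha|+\sqrt{|\lambda|})\Big(e^{\tau\beta_1}\|\bar u\|_{L^2(\mathbb B_{\hat r_0}(x))}+e^{-\tau\beta_2}\|\bar u\|_{L^2(\tilde\Omega)}\Big),
\]
with $\beta_1,\beta_2>0$ independent of $\tau$ and of $x$. (2) Invoke \eqref{useful}: $\|\bar u\|_{L^2(\mathbb B_{\hat r_0/2}(x))}\ge e^{-C(|\alpha|+\sqrt{|\lambda|})}\|\bar u\|_{L^2(\tilde\Omega)}$, so that $\|\bar u\|_{L^2(\tilde\Omega)}\le e^{C(|\alpha|+\sqrt{|\lambda|})}\|\bar u\|_{L^2(\mathbb B_{\hat r_0}(x))}$. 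Substituting this into the second term on the right, the bound becomes
\[
\|\bar u\|_{L^2(\mathbb B_{2\hat r_0}(x))}\le C(|\alpha|+\sqrt{|\lambda|})\big(e^{\tau\beta_1}+e^{C(|\alpha|+\sqrt{|\lambda|})}e^{-\tau\beta_2}\big)\|\bar u\|_{L^2(\mathbb B_{\hat r_0}(x))}.
\]
(3) Now simply pick $\tau=C_2(|\alpha|+\sqrt{|\lambda|})$ with $C_2$ large enough (depending only on $\beta_2$ and the constant from \eqref{useful}) that the second exponential is dominated by the first; since $\tau\beta_1\le C(|\alpha|+\sqrt{|\lambda|})$ for this choice, both terms are $\le e^{C(|\alpha|+\sqrt{|\lambda|})}$, giving the fixed-scale doubling inequality. (4) Finally, translate back from $\bar u$ to the original eigenfunction $u$ via $\bar u=e^{-\alpha\varrho}u$ if one wants the statement for $u$; since $|\varrho|$ is bounded on $\Omega$ this only contributes a further factor $e^{C|\alpha|}$, which is absorbed — but note the Proposition as stated is already phrased for $\bar u$, so this step is only needed downstream.

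The main obstacle is Step (1): one must arrange the cutoff annuli and verify that the weight separations $\psi(\text{inner})-\psi(2\hat r_0)$ and $\psi(2\hat r_0)-\psi(\text{outer})$ are both bounded below and above by positive constants independent of $x$, so that $\beta_1,\beta_2$ are genuinely uniform; this is where the explicit form $\psi(r)=-\ln r-\ln(\ln r)^2$ and its monotonicity are used, exactly as in the $\beta_R^1,\beta_R^2$ discussion in Lemma \ref{lemm1}. A secondary technical point is that the Carleman estimate is stated on a fixed ball $\mathbb B_{r_0}$ in the regularized polar coordinates of Section 2, so one must first ensure $2\hat r_0<r_0$ and that all the geodesic balls involved are comparable to Euclidean balls under the metric $\hat g$ — but this comparability was already recorded at the end of Section 2, so it causes no real difficulty. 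Everything else is the routine bookkeeping of tracking the $(|\alpha|+\sqrt{|\lambda|})$ dependence through Caccioppoli and through the choice of $\tau$, which the proof of Lemma \ref{lemm1} already illustrates.
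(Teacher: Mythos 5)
Steps (1)--(3) of your outline are essentially correct and reproduce the fixed-scale part of the paper's argument faithfully: apply the Carleman estimate to a cut-off version of $\bar u$, use Caccioppoli to absorb gradients, use the anchor estimate \eqref{annu1} (or \eqref{useful}) to bound the outer error term by $e^{C(|\alpha|+\sqrt{|\lambda|})}$ times the inner term, and choose $\tau$ of size $C(|\alpha|+\sqrt{|\lambda|})$. This yields $\|\bar u\|_{L^2(\mathbb B_{2\hat r_0}(x))}\le e^{C(|\alpha|+\sqrt{|\lambda|})}\|\bar u\|_{L^2(\mathbb B_{\hat r_0}(x))}$ for a single fixed radius $\hat r_0$.

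The gap is in your reduction to that fixed scale. You assert that, once you have doubling at the single radius $\hat r_0$, the inequality for arbitrary $0<r<r_0$ "follows by iterating the fixed-scale doubling inequality finitely many times." This is not true: a doubling inequality at one fixed scale propagates \emph{outward}, not \emph{inward}. Knowing $\|\bar u\|_{2\hat r_0}\le D\|\bar u\|_{\hat r_0}$ gives no control on the ratio $\|\bar u\|_{\hat r_0}/\|\bar u\|_{\hat r_0/2}$, and hence nothing at radii smaller than $\hat r_0$. (Consider $u$ that vanishes to high order at $x$ but is tame at scale $\hat r_0$.) The entire point of Proposition~\ref{pro2}, as opposed to the three-balls Lemma~\ref{lemm1} which is what your Steps (1)--(3) essentially re-derive, is uniformity as $r\to 0$. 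The "after rescaling" remark does not rescue this either: if you dilate $\mathbb B_r(x)$ to $\mathbb B_{\hat r_0}$, the anchor ball $\mathbb B_{\hat r_0}(\bar x)$ from \eqref{useful}/\eqref{annu1}, which sits at a fixed location and scale in $\tilde\Omega$, is pushed out to distance and size of order $\hat r_0/r$, which escapes any fixed chart as $r\to 0$, so the chain-of-balls anchor is lost.

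What the paper actually does is structurally different and worth noting: it runs the Carleman estimate \emph{once} with a cut-off $\eta$ supported on a \emph{wide} annulus $\{\rho<r<2R\}$, where $R$ is fixed but $\rho$ is an arbitrarily small free parameter; the weight $\psi$ then yields, in a single application, terms at the tiny scales $\rho,\tfrac{3\rho}{2},2\rho,4\rho$ on the left and at the fixed scales $R,2R$ on the right. (The extra term $\tau\rho\|r^{-1/2}\psi_\tau v\|^2$ in \eqref{carl} is exactly what makes the small-scale inner term survive uniformly as $\rho\to 0$.) After absorbing the $R$-scale error term with \eqref{annu1} and choosing $\tau$, one sets $r=2\rho$ and lets $\rho\to 0$ to cover all small radii; large radii $r\ge R/12$ are handled separately by \eqref{useful}. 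Your three comparably-sized annuli cannot produce this; you would need to rebuild Step~(1) with the inner radius decoupled from the outer one, as in the paper's choice of cut-off.
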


\begin{proof}
Let $R=\frac{\bar r_0}{8}$, where $\bar r_0$ is the fixed constant in the three-ball inequality in (\ref{quanti}). Choose $0<\rho<\frac{R}{24}$, which can be chosen to be arbitrarily small. Define a smooth cut-off function $0<\eta<1$ as follows,
\begin{itemize}
\item $\eta(r)=0$ \ \ \mbox{if} \ $r(x)<\rho$ \ \mbox{or} \  $r(x)>2R$, \medskip
\item $\eta(r)=1$ \ \ \mbox{if} \ $\frac{3\rho}{2}<r(x)<R$, \medskip
\item $|\nabla\eta|\leq \frac{C}{\rho}$ \ \ \mbox{if} $\rho<r(x)<\frac{3\rho}{2}$, \medskip
\item $|\nabla^2 \eta|\leq C$ \ \  \mbox{if} \ $R<r(x)<2R$.
\end{itemize}

 We substitute $v=\eta \bar u $ into the Carleman estimates (\ref{carl})  and consider  the elliptic equations (\ref{target}). It follows that
\begin{align*}
 \tau^\frac{3}{2}\| (\log r)^{-1} e^{\tau \psi}\eta \bar u \|+ \tau^{\frac{1}{2}} \rho^{\frac{1}{2}} \| r^{-\frac{1}{2}} e^{\tau \psi}\eta \bar u \|
& \leq C\| r^2 e^{\tau \psi}(\triangle_{\hat{g}} (\eta\bar u)+ \hat{b}(x) \cdot \nabla (\eta\bar u)+ \hat{c}(x) \eta \bar u )\| \nonumber \\
& \leq C\| r^2 e^{\tau \psi}(\triangle \eta \bar u +2\nabla\eta\cdot \nabla v+\hat{b}\cdot \nabla \eta \bar u )\|.
\end{align*}

Thanks to the properties of $\eta$ and the fact that $\tau>1$, we get that
\begin{align*}
 \| (\log r)^{-1} e^{\tau \psi} \bar u \|_{\frac{R}{2}, \frac{2R}{3}}+  \|  e^{\tau \psi} \bar u \|_{\frac{3\rho}{2}, 4\rho}
 &\leq C (\| e^{\tau \psi} \bar u \|_{\rho, \frac{3\rho}{2}}+\| e^{\tau \psi} \bar u\|_{R, 2R} ) \\
&+ C( \rho \| e^{\tau \psi} \nabla \bar u \|_{\rho, \frac{3\rho}{2}}+ R\| e^{\tau \psi} \nabla \bar u \|_{R, 2R}) \\
&+ C(|\alpha|+1) (\rho \| e^{\tau \psi} \bar u \|_{\rho, \frac{3\rho}{2}}+ R\| e^{\tau \psi} \bar u \|_{R, 2R} ).
\end{align*}
Since $R<1$ is a fixed constant and $\rho<1$, we get that
\begin{align*}
\|  e^{\tau \psi} \bar u \|_{\frac{R}{2}, \frac{2R}{3}}+  \|  e^{\tau \psi} \bar u \|_{\frac{3\rho}{2}, 4\rho}
 &\leq C(|\alpha|+1) (\| e^{\tau \psi} \bar u \|_{\rho, \frac{3\rho}{2}}+\| e^{\tau \psi} \bar u \|_{R, 2R} ) \\
&+ C( \delta \| e^{\tau \psi} \nabla \bar u \|_{\rho, \frac{3\rho}{2}}+ R\| e^{\tau \psi} \nabla \bar u \|_{R, 2R}).
\end{align*}
Using the radial and decreasing property of $\psi$ yields that
\begin{align*}
e^{\tau \psi(\frac{2R}{3})}\|  \bar u \|_{\frac{R}{2}, \frac{2R}{3}}+ e^{\tau \psi({4\rho})}  \| \bar u \|_{\frac{3\rho}{2}, 4\rho}
&\leq C (|\alpha|+1) (e^{\tau \psi(\rho) }\|  \bar u \|_{\rho, \frac{3\rho}{2}}+e^{\tau \psi(R) }\|  \bar u \|_{R, 2R} ) \\
&+ C( \rho e^{\tau \psi(\rho)} \| \nabla \bar u \|_{\rho, \frac{3\rho}{2}}+ Re^{\tau\psi(R)}\|  \nabla \bar u \|_{R, 2R}).
\end{align*}
With the help of the Caccioppoli type inequality (\ref{Cacc}) in the appendix, we have
\begin{align}
e^{\tau \psi(\frac{2R}{3})}\|  \bar u \|_{\frac{R}{2}, \frac{2R}{3}}+ e^{\tau \psi({4\rho})}  \|  \bar u \|_{\frac{3\rho}{2}, 4\rho}
&\leq C (|\alpha|+\sqrt{|\lambda|}) (e^{\tau \psi(\rho) }\| \bar u \|_{2\rho}+e^{\tau \psi(R)}\|  \bar u \|_{3R}).
\label{bbb}
\end{align}
Adding the term $ e^{\tau \psi({4\rho})} \|\bar u \|_{\frac{3\rho}{2}}$ to both sides of last inequality and taking $\psi(\rho)>\psi(4\rho)$ into account yields that
\begin{align}
e^{\tau \psi(\frac{2R}{3})}\| \bar u \|_{\frac{R}{2}, \frac{2R}{3}}+ e^{\tau \psi({4\rho})}  \|  \bar u \|_{4\rho}
&\leq C (|\alpha|+\sqrt{|\lambda|}) (e^{\tau \psi(\rho) }\|  \bar u \|_{2\rho}+e^{\tau\psi(R)}\| \bar u \|_{3R}).
\label{droppp}
\end{align}
We choose $\tau$ such that
$$C(|\alpha|+\sqrt{|\lambda|})e^{\tau\psi(R)}\|\bar u \|_{3R}\leq \frac{1}{2}e^{\tau\psi(\frac{2R}{3})} \|\bar u \|_{\frac{R}{2}, \frac{2R}{3}}. $$
To achieve it, we need to have
$$ \tau\geq \frac{1}{\psi(\frac{2R}{3})-\psi(R)}\ln \frac{ 2C(|\alpha|+\sqrt{|\lambda|}) \|\bar u \|_{3R}}{ \|\bar u \|_{\frac{R}{2}, \frac{3R}{2}}}.           $$
Then, we can absorb the second term on the right hand side of (\ref{bbb}) into the left hand side,
\begin{align}
e^{\tau \psi(\frac{2R}{3})}\|  \bar u \|_{\frac{R}{2}, \frac{2R}{3}}+ e^{\tau \psi({4\rho})}  \|  \bar u \|_{4\rho}
&\leq C (|\alpha|+\sqrt{|\lambda|}) e^{\tau \psi(\rho) }\| \bar u \|_{2\rho}.
\label{dropp}
\end{align}
To apply the Carleman estimates (\ref{carl}), we have assumed that $\tau\geq C(|\alpha|+\sqrt{|\lambda|})$.  Therefore, to have such $\tau$, we select
$$\tau=C(|\alpha|+\sqrt{|\lambda|})+ \frac{1}{\psi(\frac{2R}{3})-\psi(R)}\ln \frac{ 2C(|\alpha|+\sqrt{|\lambda|}) \|\bar u \|_{3R}}{ \|\bar u \|_{\frac{R}{2}, \frac{3R}{2}}}. $$
Dropping the first term in (\ref{dropp}), we get that
\begin{align}
\|\bar u\|_{4\rho}&\leq C(|\alpha|+\sqrt{|\lambda|})\exp\{ \big( \frac{1}{\psi(\frac{2R}{3})-\psi(R)}\ln \frac{ 2C(|\alpha|+\sqrt{|\lambda|}) \|\bar u \|_{3R}}{ \|\bar u \|_{\frac{R}{2}, \frac{3R}{2}}} \big)\big(\psi(\rho)-\psi(4\rho)\big) \nonumber \\ &  +C(|\alpha|+\sqrt{|\lambda|}) \}\|\bar u \|_{2\rho} \nonumber \\
&\leq e^{C (|\alpha|+\sqrt{|\lambda|})}   (\frac{\|\bar u \|_{3R}}{ \|\bar u \|_{\frac{R}{2}, \frac{3R}{2}}})^C \|\bar u \|_{2\rho},
\label{mada}
\end{align}
where we have used the condition that
$$ \beta_1^{-1}<\psi(\frac{2R}{3})-\psi(R)<\beta_1,  $$
$$  \beta_2^{-1}< \psi(\rho)-\psi(4\rho)<\beta_2 $$
for some positive constants $\beta_1$ and $\beta_2$ independent on $R$ or $\rho$.

Let $\hat{r}_0=\frac{R}{2}$ be fixed in (\ref{annu1}).
With aid of (\ref{annu1}), we derive that
$$ \frac{\|\bar u \|_{{L^2(\mathbb B_{3 R}})}}{ \|\bar u\|_{{{L^2(A_{\frac{R}{2}, \ \frac{3R}{2}})}}}}\leq e^{C(|\alpha|+\sqrt{|\lambda|})}. $$
Therefore, it follows from (\ref{mada}) that
$$ \|\bar u \|_{L^2(\mathbb B_{4\rho})}\leq  e^{C(|\alpha|+\sqrt{|\lambda|})} \|\bar u \|_{L^2(\mathbb B_{2\rho})}. $$
Choosing $\rho=\frac{r}{2}$, we get the doubling inequality
\begin{equation}
\|\bar u \|_{L^2(\mathbb B_{2r})}\leq  e^{C(|\alpha|+\sqrt{|\lambda|})} \|\bar u \|_{L^2(\mathbb B_{r})}
\label{rdoub1}
\end{equation}
for $r\leq \frac{R}{12}$. If $r\geq \frac{ R}{12}$, from $(\ref{useful})$,
\begin{align}
\|\bar u \|_{L^2 (\mathbb B_r)}&\geq \|\bar u \|_{L^2( \mathbb B_\frac{ R}{12})} \nonumber \\
&\geq  e^{-C_{ R}(|\alpha|+\sqrt{|\lambda|})}\|\bar u \|_{L^2(\Omega)}  \nonumber  \\
&\geq e^{-C_{ R}(|\alpha|+\sqrt{|\lambda|})}\|\bar u \|_{L^2( \mathbb B_{2r})}.
\label{rdoub2}
\end{align}
Together with (\ref{rdoub1}) and (\ref{rdoub2}), we obtain the doubling estimates
\begin{equation}
\|\bar u \|_{L^2(\mathbb B_{2r})}\leq  e^{C(|\alpha|+\sqrt{|\lambda|})} \|\bar u \|_{L^2(\mathbb B_{r})}
\label{dddou}
\end{equation}
 for any $r>0$, where $C$ only depends on the double manifold $\tilde{\Omega}$. By the translation invariant of the arguments, the proof of (\ref{goback}) is derived.

 \end{proof}
For the latter sections on the study of nodal sets and the doubling inequality on the boundary, we want to show the doubling inequality in the half ball $\mathbb B^+_r(0)$.
 Since we did an even extension across the boundary $\{x_n=0\}$ and the metric $\hat{g}_{ij}$ is  symmetric with $\{x_n=0\}$,  the estimates (\ref{goback}) also holds in the half balls. Thus,
 there exist  positive constants $C$, $r_0$ depending only on ${\Omega}$ such that the doubling inequality holds
\begin{equation}
\|\bar u \|_{L^2(\mathbb B^+_{2r})}\leq e^{C(|\alpha|+\sqrt{|\lambda|})}\|\bar u \|_{L^2(\mathbb B^+_{r})}
\label{halfdouble}
\end{equation}
for $0<r<{r_0}$.

Thanks to Proposition \ref{pro2}, we are ready to show the upper bound of nodal sets of Robin eigenfunctions in smooth domains $\Omega$.

\begin{proof}[Proof of Theorem \ref{th1}]
We estimate the nodal sets of $\bar u$ for the converted  second order elliptic equations
(\ref{back}). The doubling inequality (\ref{goback}) in Proposition \ref{pro2} implies that the double index defined in \cite{Lo} is less than $C(|\alpha|+\sqrt{|\lambda|})$. Following the strategy in \cite{Lo} which holds for second order elliptic equations with Lipschitz leading coefficients, we can derive the upper bound of the measure of nodal sets on $\tilde{\Omega}$,
\begin{equation} H^{n-1}(\{x\in\tilde{\Omega} |\bar u(x)=0\})\leq C(|\alpha|+\sqrt{|\lambda|})^{\beta}
 \end{equation}
for some $\beta>1$. By the explicit form of $\bar u$ and the even extension, the upper bound of nodal sets in the $\Omega$ easily follows
\begin{equation} H^{n-1}(\{x\in\Omega | u(x)=0\})\leq C(|\alpha|+\sqrt{|\lambda|})^{\beta}.
 \end{equation}
This completes the proof of Theorem \ref{th1}.
\end{proof}

\begin{remark}
In $n=2$ dimensions, an explicit upper bound can be obtained. Using the recent development on combinatorial arguments for doubling index in \cite{LM} and the nodal length estimates in small scales in \cite{DF} or \cite{Zh1}, we can show that
\begin{equation} H^{1}(\{x\in\Omega | u(x)=0\})\leq C(|\alpha|+\sqrt{|\lambda|})^{\frac{3}{4}-\epsilon}
 \end{equation}
for some small constant $\epsilon>0$.
\end{remark}

\section{Interior nodal sets on real analytic domains}

In this section, we prove the  upper bound for interior nodal sets of
Robin eigenfunctions.  Assume that $\Omega$ is a real analytic domain. If $\lambda=0$, the Robin eigenvalues problem can be reduced to the Steklov eigenvalue problem as discussed in Remark \ref{rre2}. The measure of interior nodal sets for analytic domains has been obtained in \cite{Zh2}. Hence, we assume $\lambda\not=0$ in the section. We first show the upper bound of
nodal sets in the neighborhood close to boundary, then show the upper
bound of nodal sets away from the boundary $\partial\Omega$. To deal with the nodal sets close to the boundary, we do an analytical extension across the boundary using lifting arguments and the Cauchy-Kovalevsky theorem.

To get rid of $\alpha$ on the boundary, we introduce the following lifting argument.
Let $$\hat{v}(x, t)=e^{\alpha t} u(x).$$
Then $\hat{v}(x, t)$ satisfies the equation
\begin{equation}
\left \{ \begin{array}{rll}
\triangle \hat{v} +\partial_t^2 \hat{v}-\alpha^2 \hat{v}+\lambda \hat{v}=0 \quad &\mbox{in} \ {\Omega}\times (-\infty, \ -\infty),  \medskip\\
\frac{\partial \hat{v}}{\partial \nu}+\frac{\partial \hat{v}}{\partial t}=0 \quad &\mbox{on} \ {\partial\Omega}\times (-\infty, \ -\infty).
\end{array}
\right.
\end{equation}
To remove the $\alpha$ and $\lambda$ in the equation, we
choose $${v}(x, t, s)=e^{(|\alpha|i+\sqrt{\lambda})s} \hat{v}(x, t).$$
If $\lambda<0$, then $\sqrt{\lambda}$ is considered as the imaginary number.
Then we have
\begin{equation}
\left \{ \begin{array}{rll}
\triangle {v} +\partial_t^2 {v}+ \partial_s^2 {v}=0 \quad &\mbox{in} \ {\Omega}\times (-\infty, \ -\infty)\times (-\infty, \ -\infty),  \medskip\\
\frac{\partial {v}}{\partial \nu}+\frac{\partial {v}}{\partial t}=0 \quad &\mbox{on} \ {\partial\Omega}\times (-\infty, \ -\infty)\times (-\infty, \ -\infty).
\end{array}
\right.
\label{killz}
\end{equation}
 Note that (\ref{killz}) is a uniform elliptic equation with oblique boundary conditions. We
introduce the cube with unequal radius as
\begin{align*}
\Omega_{R, \delta}=\{ (x, t, s)\in \mathbb R^{n+2}| |x_i|<R \ \mbox{when} \ i<n, |x_n|<\delta R, \ |t|<R, \ |s|<R\}
\end{align*}
and half-cube
\begin{align*}
\Omega^+_{R, \delta}=\{ (x, t, s)\in \mathbb R^{n+2} | |x_i|<R \ \mbox{when} \ i<n, 0\leq x_n<\delta R, \ |t|<R, \ |s|<R\}.
\end{align*}
 By rescaling, we may consider the function $v(x, t, s )$  locally in the cube with the flatten boundary using the Fermi coordinates in Section 2.  Thus, $v(x, t, s )$ satisfies
\begin{equation}
\left \{ \begin{array}{lll}
\triangle {v} +\partial_t^2 {v}+ \partial_s^2 {v}=0 \quad &\mbox{in} \ \Omega^+_{2, 1},  \medskip\\
\frac{\partial {v}}{\partial x_n}+\frac{\partial {v}}{\partial t}=0 \quad &\mbox{on} \  \Omega^+_{2, 1}\cap\{x_n=0\}.
\end{array}
\right.
\label{backgo}
\end{equation}
By elliptic estimates with oblique boundary conditions, it follows that
\begin{align}
\| v\|_{C^4(\Omega^+_{\frac{3}{2}, 1})}\leq C_1(\Omega) \| v\|_{L^\infty(\Omega^+_{2, 1})},
\label{niren}
\end{align}
where $C_1(\Omega)$ depends only on $\Omega$.
We apply the Cauchy-Kovalevsky theorem in \cite{H} to extend $v$ across the boundary $\partial\Omega\times [-1, \ 1]\times [-1, \ 1]$. We can transform the boundary conditions in (\ref{backgo}) to have the homogeneous boundary conditions. With aid of the Cauchy-Kovalevsky theorem (Theorem 9.4.5) in \cite{H}, we can extend $v(x, t, s)$ to the region $\Omega_{1, \delta}$, where $\delta$ depends only on $\partial\Omega$. Moreover, we have
\begin{align}
\|v\|_{L^\infty(\Omega_{{1}, \delta})}
\leq C_2(\Omega)\|v\|_{L^\infty (\Omega^+_{2, 1})}.
\label{agree}
\end{align}

By the uniqueness of the analytic continuation, it also holds that
\begin{align}
-\triangle u=\lambda u \quad &\mbox{in} \ \widehat{\Omega}_1,
\end{align}
where $\widehat{\Omega}_1= \{ x\in \mathbb R^n| \dist\{x, \Omega\}\leq \delta\}$.

Recall the definition of ${v}(x, t, s)=e^{(|\alpha|i+\sqrt{\lambda})s} e^{\alpha t} u(x)$, from (\ref{agree}), it follows that
\begin{align}
\|u\|_{L^\infty(\mathbb B_{\delta})}\leq e^{C(|\alpha|+\sqrt{|\lambda|})} \|u\|_{L^\infty(\mathbb B^+_2)}.
\end{align}
From the relations of $\bar u$ and $u$, the doubling inequality as (\ref{halfdouble}) also holds for $u$ as
\begin{equation}
\|u \|_{L^2(\mathbb B^+_{2r})}\leq e^{C(|\alpha|+\sqrt{|\lambda|})}\| u \|_{L^2(\mathbb B^+_{r})}
\label{halfdouble2}
\end{equation}
for $0<r<{r_0}$.
Applying the doubling inequality (\ref{halfdouble2}) in $L^\infty$ norm finitely many times in the half balls, we have
\begin{align}
\|u\|_{L^\infty(\mathbb B_{\delta})}&\leq e^{C(|\alpha|+\sqrt{|\lambda|})} \|u\|_{L^\infty(\mathbb B^+_\frac{\delta}{2})} \nonumber \\
&\leq e^{C(|\alpha|+\sqrt{|\lambda|})} \|u\|_{L^\infty(\mathbb B_\frac{\delta}{2})}.
\end{align}
By rescaling arguments, we show that
\begin{align}
\|u\|_{L^\infty(\mathbb B_{2r})}\leq e^{C(|\alpha|+\sqrt{|\lambda|})} \|u\|_{L^\infty(\mathbb B_r)}.
\label{correct}
\end{align}
for any $r\leq \frac{\delta}{2}$ and $C$ depending only on $\Omega$.

We want to extend ${u}(x)$ locally as a holomorphic function in
$\mathbb{C}^n$. Applying elliptic estimates in a ball $\mathbb B_{
\frac{r}{({|\alpha|}+\sqrt{|\lambda|})}}(p)\subset \widehat{\Omega}_1$, we have
\begin{equation}
|\frac{ D^{\bar{\alpha}} {u}(p)}{\bar{\alpha} !}|\leq
C^{|\bar \alpha|}_1({|\alpha|}+\sqrt{|\lambda|})^{|\bar\alpha|} r^{-|\bar{\alpha}| }\|{u}\|_{L^\infty},
\end{equation}
where $\bar{\alpha}$ is a multi-index and $C_1>1$ depends on $\Omega$.
We may consider the point $p$ as the origin.
If summing a geometric series, we can extend ${u}(x)$ to be a
holomorphic function ${u}(z)$ with $z\in\mathbb{C}^n$. Moreover,
we derive that
\begin{equation}
\sup_{|z|\leq \frac{(|\alpha|+\sqrt{|\lambda|})^{-1}r}{2C_1}}|{u}(z)|\leq C_2 \sup_{|x|\leq
(|\alpha|+\sqrt{|\lambda|})^{-1}r}|{u}(x)|
\end{equation}
with $C_2>1$. Iterating $|\alpha|+\sqrt{|\lambda|}$ times to the translation of ${u}$ in the complex direction gives that
$$\sup_{|z|\leq r}|{u}(z)|\leq e^{C_4(|\alpha|+\sqrt{|\lambda|})} \sup_{|x|\leq
C_3 r}|{u}(x)|,
  $$
where $C_3, C_4>1$ depends on $\Omega$.
  Taking the doubling inequality (\ref{correct})
 in account, from the rescaling argument,  gives that
\begin{equation}
\sup_{|z|\leq 2r}|{u}(z)|\leq e^{C_5(|\alpha|+\sqrt{|\lambda|})} \sup_{|x|\leq
r}|{u}(x)| \label{dara}
\end{equation}
for $0<r<r_0$ with $r_0$ depending on $\Omega$ and $C_5$ depends on $\Omega$.

We need a lemma concerning the growth of a complex analytic function
with the number of zeros, e.g. Lemma 2.3.2 in \cite{HL}.
\begin{lemma}
Suppose $f: \mathcal{B}_1(0)\subset \mathbb{C}\to \mathbb{C}$ is an
analytic function satisfying
$$ f(0)=1\quad \mbox{and} \quad \sup_{ \mathcal{B}_1(0)}|f|\leq 2^N$$
for some positive constant $N$. Then for any $r\in (0, 1)$, there
holds
$$\sharp\{z\in\mathcal{B}_r(0): f(z)=0\}\leq cN        $$
where $c$ depends on $r$. Especially, for $r=\frac{1}{2}$, there
holds
$$\sharp\{z\in \mathcal{B}_{1/2}(0): f(z)=0\}\leq N.        $$
\label{wwhy}
\end{lemma}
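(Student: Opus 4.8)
The plan is to prove the classical Jensen-type bound on the number of zeros of a bounded analytic function. First I would recall Jensen's formula: if $f$ is analytic on a disc $\mathcal{B}_R(0)$, $f(0)\neq 0$, and $a_1,\dots,a_k$ are its zeros in $\mathcal{B}_r(0)$ with $r<R$, then
\begin{equation}
\sum_{j=1}^{k}\log\frac{r}{|a_j|}\leq \frac{1}{2\pi}\int_0^{2\pi}\log|f(Re^{i\theta})|\,d\theta-\log|f(0)|.
\end{equation}
Applying this with $R=1$, $f(0)=1$, and the hypothesis $\sup_{\mathcal{B}_1(0)}|f|\leq 2^N$ gives that the right-hand side is at most $N\log 2$.

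Next, to count zeros in $\mathcal{B}_r(0)$ for a fixed $r\in(0,1)$, I would pick an intermediate radius $\rho$ with $r<\rho<1$ and note that every zero $a_j$ lying in $\mathcal{B}_r(0)$ contributes at least $\log(\rho/r)>0$ to the sum $\sum_j \log(\rho/|a_j|)$; discarding the (nonnegative) contributions of zeros in the annulus $r\leq |a_j|<\rho$ and applying Jensen's formula on $\mathcal{B}_\rho(0)$ (whose boundary integral is still bounded by $N\log 2$ since $\sup_{\mathcal{B}_\rho}|f|\leq\sup_{\mathcal{B}_1}|f|\leq 2^N$) yields
\begin{equation}
\#\{z\in\mathcal{B}_r(0): f(z)=0\}\cdot\log\frac{\rho}{r}\leq N\log 2,
\end{equation}
so $\#\{z\in\mathcal{B}_r(0): f(z)=0\}\leq cN$ with $c=\log 2/\log(\rho/r)$ depending only on $r$ (after fixing, say, $\rho=(1+r)/2$). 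For the sharper statement with $r=\tfrac12$, I would instead keep the full multiset of zeros $a_1,\dots,a_k$ in $\mathcal{B}_{1/2}(0)$ and apply Jensen's formula directly on $\mathcal{B}_1(0)$: since $|a_j|\leq \tfrac12$ gives $\log(1/|a_j|)\geq \log 2$ for each $j$, we get $k\log 2\leq \sum_j\log(1/|a_j|)\leq N\log 2$, hence $k\leq N$.

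There is essentially no serious obstacle here; the only mild care needed is the degenerate case where $f$ has a zero at the origin, which is excluded by the normalization $f(0)=1$, and the case $f\equiv$ constant, which is excluded by $f(0)=1$ together with the zero count being trivially $0$. One should also note that if $f$ has no zeros the statements are vacuous, and that Jensen's formula as stated requires $f$ to extend analytically to a neighborhood of $\overline{\mathcal{B}_1(0)}$ or at least to be in the Nevanlinna class; since in the application $\hat u(z)$ is genuinely holomorphic on a slightly larger polydisc this is not an issue, but to be safe one can simply apply the formula on $\mathcal{B}_\rho(0)$ for $\rho<1$ and let $\rho\to 1$. I would present the argument in this order: state Jensen's formula, derive the boundary bound from the hypothesis, then extract the two zero counts.
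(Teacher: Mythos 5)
Your proof is correct. The paper states Lemma \ref{wwhy} without proof (deferring to \cite{BL} and \cite{HL}), and the Jensen's-formula argument you give is precisely the standard one those sources rely on. Both the general-$r$ bound and the sharp $r=\tfrac12$ count follow cleanly from the normalization $f(0)=1$, the boundary bound $\leq N\log 2$, and the elementary estimate $\log(1/|a_j|)\geq\log(1/r)$; your remark about applying Jensen on $\mathcal{B}_\rho$ with $\rho<1$ and letting $\rho\to1^-$ correctly handles the fact that $f$ is only assumed analytic on the open disc.
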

With Lemma \ref{wwhy} and the doubling inequalities, we are ready to give the proof of Theorem 2. The ideas originate from the pioneering work in  \cite{DF} and \cite{Lin}.
\begin{proof}[Proof of Theorem 2] In the first step, we prove the nodal sets in a
neighborhood $\Omega_{\frac{\delta}{8}}$. Since $\partial\Omega$ is compact, we can choose a sequence of finite number of balls centered on $\partial\Omega$ such that those balls cover $\Omega_{\frac{\delta}{8}}$. By rescaling and
translation, we can argue on scales of order one and choose balls centered at origin. Let $p\in \mathbb
B_{1/4}$ be the point where the maximum of $|{u}|$ in $\mathbb
B_{1/4}$ is attained. After a rescaling, we may assume that ${u}(p)=1$. By the doubling inequality (\ref{dara}), we have
\begin{align}
\sup_{|z|\leq 1}|{u}(z)|&\leq e^{C(|\alpha|+\sqrt{|\lambda|})} \sup_{|x|\leq
\frac{1}{4}}|{u}(x)|\nonumber \\ &\leq  e^{C(|\alpha|+\sqrt{|\lambda|})}.
\end{align}
Applying (\ref{dara}) to the translation of ${u}$, we obtain that
\begin{align}
\sup_{|z-p|\leq 1}|{u}(z)|&\leq e^{C(|\alpha|+\sqrt{|\lambda|})} \sup_{|x-p|\leq
\frac{1}{4}}|{u}(x)|  \nonumber \\
&\leq  e^{C(|\alpha|+\sqrt{|\lambda|})} \sup_{|x|\leq
\frac{1}{2}}|{u}(x)| \nonumber \\ &\leq e^{C(|\alpha|+\sqrt{|\lambda|})}.
\label{dara8}
\end{align}
 For each direction $\omega \in S^{n-1}$, set
${u}_{\omega}(z)={u}(p+z\omega)$ in $z\in \mathcal{B}_1(0)\subset\mathbb{C}$. Denote $N(\omega)=\sharp\{z\in\mathcal{B}_{1/2}(0)\subset
\mathbb{C}| e_\omega(z)=0\}$. By the doubling property (\ref{dara8}) and
Lemma \ref{wwhy}, we can show that
\begin{eqnarray}
\sharp\{ x \in \mathbb B_{1/2}(p) &|& x-p \ \mbox{is parallel to} \
\omega \ \mbox{and} \ {u}(x)=0\} \nonumber\\&\leq&
\sharp\{z\in\mathcal{B}_{1/2}(0)\subset
\mathbb{C}| {u}_\omega(z)=0\} \nonumber\\
&=&N(\omega)\nonumber\\
&\leq& C(|\alpha|+\sqrt{|\lambda|}).
\end{eqnarray}
With aid of integral geometry estimates, it yields that
\begin{eqnarray}
H^{n-1}\{ x \in \mathbb B_{1/2}(p)| {u}(x)=0\} &\leq&
c(n)\int_{S^{n-1}} N(\omega)\,d \omega \nonumber\\
&\leq &\int_{S^{n-1}}C(|\alpha|+\sqrt{|\lambda|})\, d\omega \nonumber\\
&=&C(|\alpha|+\sqrt{\lambda}).
\end{eqnarray}
Since $\mathbb B_{1/4}(0)\subset \mathbb B_{1/2}(p)$, we obtain the upper bound estimates
\begin{align}
H^{n-1}\{ x \in \mathbb B_{1/4}(0)| {u}(x)=0\} \leq C(|\alpha|+\sqrt{\lambda}).
\end{align}
By covering the domain
$\Omega_{\frac{\delta}{8}}\subset\widehat{\Omega}_1$ by finite number
of balls centered at $\partial\Omega$. Then we obtain that \begin{equation}H^{n-1}\{x\in
\Omega_{\frac{\delta}{8}}|{u}(x)=0\}\leq C(|\alpha|+\sqrt{|\lambda|}).
\label{last12}
\end{equation}

In the second step, we deal with the measure of nodal sets in
$\Omega\backslash \Omega_{\frac{\delta}{8}}$. Recall that we have
obtained the doubling inequality in the interior of the domain in Proposition \ref{pro2},
i.e.
$$
\|\bar u\|_{L^\infty(\mathbb B_{2r}(p))}\leq
e^{C(|\alpha|+\sqrt{|\lambda|})}\|\bar u\|_{L^\infty(\mathbb B_{r}(p))}.
$$
Since $\bar u(x)= u(x)\exp\{-\alpha \varrho(x)\}$ and $-C_0
<\varrho(x)\leq C_0$ for some constant $C_0$ depending on
$\Omega$ in (\ref{441}), it follows that
\begin{equation}
\|u \|_{L^\infty(\mathbb B_{2r}(p))}\leq
e^{C(|\alpha|+\sqrt{\lambda})}\|u\|_{L^\infty(\mathbb B_{r}(p))}
\label{corre}
\end{equation}
holds for $p\in \Omega\backslash \Omega_{\frac{\delta}{8}}$
and $0<r\leq r_0\leq \frac{\delta}{8}$. We can similarly extend
$u(x)$ locally as a holomorphic function in $\mathbb {C}^n$.
 Applying elliptic estimates in a
small ball $\mathbb B_{\frac{r}{|\alpha|+\sqrt{|\lambda|}}}(p)\subset \Omega\backslash
\Omega_{\frac{\delta}{8}}$ yields that
\begin{equation}
|\frac{ D^{\bar \alpha} u(p)}{\bar{\alpha} !}|\leq
C_6^{|\bar \alpha|}(|\alpha|+\sqrt{|\lambda|})^{\frac{|\bar\alpha|}{2}}r^{-|\bar\alpha|}\|u\|_{L^\infty},
\end{equation}
with $C_6>1$ depending on $\Omega$.
Let us consider the point $p$ as the origin. Summing up a geometric
series implies that we can extend $u(x)$ to be a holomorphic function
$u(z)$ with $z\in\mathbb{C}^n$ to have
\begin{equation}
\sup_{|z|\leq \frac{r}{2C_6 (|\alpha|+\sqrt{|\lambda|}) }}|u(z)|\leq C_7 \sup_{|x|\leq
\frac{r}{|\alpha|+\sqrt{|\lambda|}}}|u(x)|
\end{equation}
with $C_7>1$.
Furthermore, it holds that
\begin{equation}
\sup_{|z|\leq r} |u(z)|\leq e^{C_9(|\alpha|+\sqrt{|\lambda|})} \sup_{|x|\leq
{C_8r}}|u(x)|
\end{equation}
by iterating $|\alpha|+\sqrt{|\lambda|}$ times to the translation of $u(x)$ in the complex direction, where $C_8, C_9>1$ depends on $\Omega$.
By iterating the doubling inequality (\ref{corre}) finite number steps and rescaling arguments, we arrive at
\begin{equation}
\sup_{|z|\leq 2r}|u(z)|\leq e^{C(|\alpha|+\sqrt{|\lambda|})} \sup_{|x|\leq
r}|u(x)| \label{dara1}
\end{equation}
holds for $0<r<r_0$ with $r_0$ depending on $\Omega$ and $C$ independent of $\lambda$ and $r$.

Following the
same procedure in the neighborhood of the
boundary, we make use of Lemma \ref{wwhy} and the inequality
(\ref{dara1}) to obtain  the upper bounds of the nodal sets in the interior domain.  By rescaling and translation, we also argue on scales
of order one. Let $p\in \mathbb B_{1/4}$ be the point where the
maximum of $|u|$ in $\mathbb B_{1/4}$ is achieved. For each
direction $\omega \in S^{n-1}$, let $u_{\omega}(z)=
u(p+z\omega)$ in $z\in \mathcal{B}_1(0)\subset\mathbb{C}$.
With aid of the doubling property (\ref{dara1}) and the Lemma \ref{wwhy}, we have that
\begin{eqnarray}
\sharp\{ x \in \mathbb B_{1/2}(p) &|& x-p \ \mbox{is parallel to} \
\omega \ \mbox{and} \ u(x)=0\} \nonumber\\&\leq&
\sharp\{z\in\mathcal{B}_{1/2}(0)\subset
\mathbb{C}| u_\omega(z)=0\} \nonumber\\
&=&N(\omega)\nonumber\\ &\leq& C(|\alpha|+\sqrt{|\lambda|}).
\end{eqnarray}
Thanks to the integral geometry estimates, we get
\begin{eqnarray}
H^{n-1}\{ x \in \mathbb B_{1/2}(p)| u(x)=0\} &\leq&
c(n)\int_{S^{n-1}} N(\omega)\,d \omega \nonumber\\
&\leq &\int_{S^{n-1}}C(|\alpha|+\sqrt{\lambda})\, d\omega\nonumber\\ &=&C(|\alpha|+\sqrt{|\lambda|}).
\end{eqnarray}
 Covering the domain $\Omega\backslash\Omega_{\frac{\delta}{8}}$ using finite number of balls gives that
\begin{equation} H^{n-1}\{x\in
\Omega\backslash\Omega_{\frac{\delta}{8}}|u(x)=0\}\leq
C(|\alpha|+\sqrt{|\lambda|}). \label{last2}
\end{equation}
Combining the results in (\ref{last12}) and (\ref{last2}), we arrive at the
conclusion in Theorem \ref{th4}.

\end{proof}

\section{Boundary doubling inequality}

In this section, we prove new quantitative propagation smallness results for the second order elliptic equations (\ref{half})
in the half ball. By rescaling, we may consider the equations in $\mathbb B^+_{1/2}$. To present the results in a general setting, we may consider the second order uniformly elliptic equations
\begin{align}
- a_{ij} D_{ij}u +b_i(y) D_i u +c(y)u=0  \quad{in} \ \mathbb B^+_{1/2},
\label{general}
\end{align}
where $a_{ij}$ is $C^1$, and $ b(y)$ and $ c(y)$ satisfy
\begin{equation}
\left\{ \begin{array}{lll}
\| {b}\|_{W^{1, \infty} (\mathbb B^+_{1/2})}\leq C(|\alpha|+1), \medskip \\
\| {c}\|_{W^{1, \infty}(\mathbb B^+_{1/2})}\leq C(\alpha^2 +|\lambda|).
\end{array}
\right.
\label{aaa}
\end{equation}

We are able to show the following quantitative two half-ball and one lower dimensional ball type result.
\begin{lemma}
Let $u\in C^\infty_0(\mathbb B^+_{1/2}  )$ be a solution of (\ref{general}). Denote
$${\bf B}_{1/3} =\{ (y', \ 0)\in \mathbb R^n| y'\in \mathbb R^{n-1}, \ |y'|<\frac{1}{3}\}.$$
Assume that
\begin{equation}
\|u \|_{H^1({\bf B}_{1/3})}+\|\frac{\partial u}{\partial \nu}\|_{L^2({\bf B}_{1/3})}\leq \epsilon <<1
\end{equation}
and $\|u \|_{L^2(\mathbb B^+_{1/2})}\leq 1$. There exist positive constants $C$ and $\beta$ such that
\begin{equation}
\|u\|_{L^2(\frac{1}{256}\mathbb B^+_1)}\leq e^{C(|\alpha|+\sqrt{|\lambda|})} \epsilon^\beta.
\label{too}
\end{equation}
More precisely, we can show that there exists $0<\kappa<1$ such that
\begin{equation}
\|u\|_{L^2(\frac{1}{256}\mathbb B^+_1)}\leq e^{C(|\alpha|+\sqrt{|\lambda|})} \|u\|_{L^2(\mathbb B^+_{1/2})}^\kappa \big( \|u \|_{H^1({\bf B}_{1/3})}+\|\frac{\partial u}{\partial \nu}\|_{L^2({\bf B}_{1/3})}  \big)^{1-\kappa}.
\label{tool}
\end{equation}
\label{halfspace}
\end{lemma}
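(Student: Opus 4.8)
The plan is to combine a Carleman estimate with boundary terms together with an interior propagation of smallness argument, proceeding in two stages: first pushing the smallness of $u$ and $\partial_\nu u$ on the flat piece $\mathbf{B}_{1/3}$ into a thin half-collar near $\{y_n=0\}$, and then using the interior three-ball inequality (Lemma \ref{lemm1}, transported to the half-ball setting via the even reflection already constructed in Section 2) to propagate that smallness into $\tfrac{1}{256}\mathbb{B}^+_1$. Since $u$ is only assumed to be a solution in $\mathbb{B}^+_{1/2}$ with a Neumann-type datum, the even reflection across $\{y_n=0\}$ is not available directly (the normal derivative is not assumed to vanish, only to be small), so one cannot simply quote the interior doubling inequality; this is exactly why a genuine \emph{boundary} Carleman estimate is needed. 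I would invoke here the global quantitative Carleman estimate with boundary terms promised as Proposition \ref{Pro4} (proved in the last section): a weighted inequality of the shape
\[
\tau \|e^{\tau\phi} u\|_{L^2(\mathbb{B}^+)}^2 + (\text{boundary weighted term on }\{y_n=0\})
\;\le\; C\Big(\|e^{\tau\phi}(a_{ij}D_{ij}u - b_iD_iu - cu)\|^2 + \tau\|e^{\tau\phi}\tfrac{\partial u}{\partial\nu}\|_{L^2(\{y_n=0\})}^2\Big),
\]
valid for $\tau \ge C_1(1+|\alpha|+\sqrt{|\lambda|})$, with a weight $\phi$ chosen (as in Proposition \ref{pro1}) to be radial, decreasing, and pseudoconvex with respect to the operator, and adapted to the half-ball geometry so that no spurious boundary terms of the wrong sign appear.

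The execution would run as follows. First, fix a smooth cutoff $\eta$ supported in $\mathbb{B}^+_{1/2}$, equal to $1$ on a neighbourhood of $\tfrac{1}{256}\mathbb{B}^+_1$ and vanishing near $\partial \mathbb{B}^+_{1/2}$, and apply the boundary Carleman estimate to $v=\eta u$. Using the equation (\ref{general}), the commutator terms $[a_{ij}D_{ij},\eta]u$ are supported in the annular region where $\nabla\eta\neq 0$, i.e. away from the origin, and the coefficient bounds (\ref{aaa}) produce the now-familiar factor $C(|\alpha|+\sqrt{|\lambda|})$ after absorbing $\|b\|$ and $\|c\|$ and using the Caccioppoli inequality (\ref{Cacc}) to trade gradient norms for $L^2$ norms. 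The boundary contributions on the right are controlled by $\|u\|_{H^1(\mathbf{B}_{1/3})}+\|\partial_\nu u\|_{L^2(\mathbf{B}_{1/3})}\le\epsilon$ (the $H^1$ norm is what is needed to bound the first-order boundary terms that the integration by parts in the Carleman identity generates), while on the left one keeps the bulk term $\tau\|e^{\tau\phi}u\|^2$ over $\tfrac{1}{256}\mathbb{B}^+_1$. Evaluating the weight $\phi$ on the relevant shells and using $\|u\|_{L^2(\mathbb{B}^+_{1/2})}\le 1$ on the far annulus, one arrives after the standard rearrangement at an inequality of the form
\[
\|u\|_{L^2(\frac{1}{256}\mathbb{B}^+_1)} \;\le\; e^{C\tau}\,\epsilon \;+\; e^{-c\tau},
\]
with $c>0$ depending only on the geometry, valid for all $\tau\ge C_1(1+|\alpha|+\sqrt{|\lambda|})$.

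Finally, optimize in $\tau$. If $\epsilon$ is small relative to $e^{-C_1(1+|\alpha|+\sqrt{|\lambda|})}$ one chooses $\tau = \tfrac{1}{C+c}\log(1/\epsilon)$, which balances the two terms and yields $\|u\|_{L^2(\frac{1}{256}\mathbb{B}^+_1)}\le \epsilon^{\beta}$ with $\beta = c/(C+c)\in(0,1)$; if instead $\epsilon$ is not that small one simply takes $\tau = C_1(1+|\alpha|+\sqrt{|\lambda|})$, and the resulting bound $e^{C(|\alpha|+\sqrt{|\lambda|})}\epsilon^{\beta}$ still holds because the prefactor absorbs the constant. This gives (\ref{too}). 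The sharper interpolation form (\ref{tool}) follows by the usual homogenization trick: apply (\ref{too}) to $u/\|u\|_{L^2(\mathbb{B}^+_{1/2})}$, noting that the hypothesis $\|u\|_{L^2(\mathbb{B}^+_{1/2})}\le 1$ was only used to bound the far annulus and the argument is in fact scale-free in $u$, so that $\epsilon$ gets replaced by $\big(\|u\|_{H^1(\mathbf{B}_{1/3})}+\|\partial_\nu u\|_{L^2(\mathbf{B}_{1/3})}\big)/\|u\|_{L^2(\mathbb{B}^+_{1/2})}$ and one multiplies through by $\|u\|_{L^2(\mathbb{B}^+_{1/2})}$, with $\kappa=\beta$. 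The main obstacle is the boundary Carleman estimate itself: one must choose the weight and the half-ball convexification so that the boundary integrand produced by integration by parts is dominated by $\tau\|e^{\tau\phi}\partial_\nu u\|^2$ plus the $H^1$ data on $\mathbf{B}_{1/3}$, with no loss in the $\tau$-power and with the correct linear-in-$(|\alpha|+\sqrt{|\lambda|})$ threshold; this is precisely the content deferred to Proposition \ref{Pro4}, and everything here is conditional on it.
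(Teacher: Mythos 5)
Your plan is essentially the paper's argument: apply the boundary Carleman estimate of Proposition~\ref{Pro4} to $\eta u$ for a cutoff $\eta$ equal to $1$ near the small half-ball and supported away from the curved part of $\partial\mathbb B^+_{1/2}$, so that the boundary contribution sits on the flat face and is controlled by $\|u\|_{H^1(\mathbf B_{1/3})}+\|\partial_\nu u\|_{L^2(\mathbf B_{1/3})}$, use the half-ball Caccioppoli bound (\ref{cacciop}) to trade gradient for $L^2$ in the commutator annulus, compare the weight $e^{sh}$ on the annulus $\{1/8\le|y|\le1/4\}$ (where $h\le -1/256$) against its minimum on $\mathbb B^+_{1/256}$, and then optimize over $\tau\ge C_s(|\alpha|+\sqrt{|\lambda|})$. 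The one correction: the paper derives the interpolation form (\ref{tool}) directly (with $\kappa=\psi_1/(\psi_1-\psi_0)$) and gets (\ref{too}) as a consequence, whereas your normalization route going the other way gives $\kappa=1-\beta$, not $\kappa=\beta$; also, the "two-stage" opening paragraph (propagate into a collar, then use the interior three-ball inequality via reflection) is a false start which you correctly abandon in the next sentence.
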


Such estimates without considering the quantitative behavior of $\alpha$ and $\lambda$ has been established in \cite{Lin}. To show the quantitative three-ball inequality in the lemma, we develop some novel quantitative global Carleman estimates  involving the boundary. The weight function in Carleman estimates (\ref{useew}) is somewhat inspired by \cite{LR} and \cite{JL}.  Such results play an important role not only in characterizing the doubling index in a cube in \cite{Lo}, but also in inverse problems, see \cite{ARRV}.

The quantitative global Carleman estimates with boundary  is stated in Proposition \ref{Pro4}.
We choose a weight function $$\psi(y)= e^{sh(y)},$$ where $$h(y)=-\frac{|y'|^2}{4}+\frac{y^2_n}{2}-y_n$$ and $s$ is a large parameter that will be determined later.
\begin{proposition}
Let $s$ be a fixed large constant. There exist positive constants $C_s$ and $C_0$ depending on $s$ such that for any  $v\in C^\infty(\mathbb B_{1/2}^+)$, and
$$\tau>C_s(|\alpha|+\sqrt{|\lambda|}),$$
one has
\begin{align}
\| e^{\tau \psi}(- a_{ij} D_{ij}v +b_i D_i v +c v)\|_{L^2(\mathbb B_{1/2}^+)} &+\tau^\frac{3}{2} s^2\| \psi^{\frac{3}{2}} e^{\tau \psi} v \|_{L^2(\partial \mathbb B_{1/2}^+)} + \tau^\frac{1}{2} s \| \psi^{\frac{1}{2}} e^{\tau \psi}\nabla v \|_{L^2(\partial \mathbb B_{1/2}^+)} \nonumber \\
&\geq C_0 \tau^\frac{3}{2}s^2 \|\psi^{\frac{3}{2}} e^{\tau \psi} v \|_{L^2( \mathbb B_{1/2}^+)}+ C_0 \tau^\frac{1}{2} s \| \psi^{\frac{1}{2}}  e^{\tau \psi} \nabla v \|_{L^2( \mathbb B_{1/2}^+)}.
 \label{useew}
\end{align}
\label{Pro4}
\end{proposition}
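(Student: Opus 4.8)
The plan is to prove a Carleman estimate for the conjugated operator by the standard Hörmander method, but keeping careful track of the boundary contributions that normally get discarded. Write $w = e^{\tau\psi}v$, so that $v = e^{-\tau\psi}w$, and compute the conjugated operator $P_\tau w := e^{\tau\psi}\big(-a_{ij}D_{ij}(e^{-\tau\psi}w)\big)$. Since $\psi = e^{sh}$ with $h$ a fixed smooth function, one has $\nabla\psi = s\psi\,\nabla h$ and $D^2\psi = s\psi(s\,\nabla h\otimes\nabla h + D^2 h)$; thus the large parameter in front of the first–order and zeroth–order weight terms is $\tau s \psi$ and $\tau^2 s^2\psi^2$ respectively, and the factors $s$, $s^2$ appearing in \eqref{useew} come precisely from differentiating the exponential weight. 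Split $P_\tau$ into its (formally) symmetric and antisymmetric parts $P_\tau = P^+ + P^-$, expand $\|P_\tau w\|_{L^2(\mathbb B^+_{1/2})}^2 = \|P^+w\|^2 + \|P^-w\|^2 + 2\langle P^+w, P^-w\rangle$, and integrate by parts in the cross term. The key point is that each integration by parts over $\mathbb B^+_{1/2}$ produces a boundary integral over $\partial\mathbb B^+_{1/2}$ (which includes both the flat piece $\{y_n=0\}$ and the spherical cap); we do \emph{not} assume $v$ has compact support, so these terms must be retained on the left-hand side. The commutator $[P^+,P^-]$ contributes the positive bulk term, and the sub-ellipticity needed is exactly the pseudoconvexity of $h$ with respect to the (constant-coefficient principal part frozen at a point, plus a perturbation) Laplacian: one checks that $h(y) = -|y'|^2/4 + y_n^2/2 - y_n$ is chosen so that $\nabla h \neq 0$ on $\overline{\mathbb B^+_{1/2}}$ and the Hörmander–pseudoconvexity inequality holds, for $s$ large enough, uniformly on the half-ball. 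This yields, after absorbing the lower-order terms $b_iD_iv + cv$ using \eqref{aaa} and the hypothesis $\tau > C_s(|\alpha| + \sqrt{|\lambda|})$ (so that $C(\alpha^2 + |\lambda|) \le C\tau^2$ and $C(|\alpha|+1) \le C\tau$ can be beaten by the gradient and zeroth-order gains $\tau^{1/2}s\psi^{1/2}$ and $\tau^{3/2}s^2\psi^{3/2}$), the claimed inequality with the boundary terms over all of $\partial\mathbb B^+_{1/2}$.

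The steps, in order: (1) conjugate and split $P_\tau = P^+ + P^-$, identifying the principal symbols and the dependence on $s$; (2) expand the $L^2$ norm, integrate by parts in $\langle P^+w, P^-w\rangle$, and collect the bulk commutator term together with all boundary terms; (3) verify the pointwise pseudoconvexity/sub-ellipticity inequality for $h$ on $\overline{\mathbb B^+_{1/2}}$, fixing $s$ large, to obtain a bulk lower bound $\gtrsim \tau^3 s^4\int\psi^3|w|^2 + \tau s^2\int\psi|\nabla w|^2$ modulo boundary terms; (4) return from $w$ to $v$ via $w = e^{\tau\psi}v$, $\nabla w = e^{\tau\psi}(\nabla v + \tau\nabla\psi\, v)$, so that $\|\psi^{1/2}e^{\tau\psi}\nabla v\|$ and $\|\psi^{3/2}e^{\tau\psi}v\|$ appear with the stated powers of $\tau$ and $s$; (5) absorb the first- and zeroth-order terms $b_iD_iv$, $cv$ into the bulk left-hand side using \eqref{aaa} and $\tau > C_s(|\alpha|+\sqrt{|\lambda|})$, and absorb the spherical-cap boundary terms' contributions that are not of the desired form (there are none to the leading order, since on $\partial\mathbb B^+_{1/2}$ we simply keep $\|\psi^{3/2}e^{\tau\psi}v\|_{L^2(\partial\mathbb B^+_{1/2})}$ and $\|\psi^{1/2}e^{\tau\psi}\nabla v\|_{L^2(\partial\mathbb B^+_{1/2})}$ on the left). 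This gives \eqref{useew}.

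The main obstacle I expect is step (2)–(3): organizing the many boundary integrals produced by the integrations by parts and showing that, after choosing $s$ large, none of them has the wrong sign in a way that cannot be controlled by the retained boundary norms $\tau^{3/2}s^2\|\psi^{3/2}e^{\tau\psi}v\|_{L^2(\partial\mathbb B^+_{1/2})}$ and $\tau^{1/2}s\|\psi^{1/2}e^{\tau\psi}\nabla v\|_{L^2(\partial\mathbb B^+_{1/2})}$. In particular, on the flat face $\{y_n = 0\}$ one has $\partial_n h = -1 \neq 0$, so the weight is genuinely non-characteristic there; the boundary quadratic form in $(v,\nabla v)|_{\{y_n=0\}}$ must be examined and the "bad" directions bounded by the full $\partial\mathbb B^+_{1/2}$ norms appearing on the left of \eqref{useew} — this is where the specific form of $h$ (linear-in-$y_n$ term $-y_n$ with the right sign) is used, and it is the reason the statement puts boundary terms on \emph{both} sides rather than requiring a boundary condition. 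Tracking the exact powers $s^2$ versus $s$ on the zeroth- and first-order terms is a bookkeeping matter that follows from $\nabla\psi = s\psi\nabla h$, and the rescaling to $\mathbb B^+_{1/2}$ is harmless since all constants are allowed to depend on the fixed $s$.
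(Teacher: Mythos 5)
Your plan (conjugate, split, integrate by parts keeping all boundary terms, use the exponential weight $\psi=e^{sh}$ to manufacture pseudoconvexity for large $s$, and absorb $b,c$ using $\tau > C_s(|\alpha|+\sqrt{|\lambda|})$) is the same framework the paper uses; the paper's split $P_\tau = P_1 + P_2 + \tau\psi a(y,s)$ is essentially your symmetric/antisymmetric split up to reshuffling of lower-order terms. However, there is a genuine gap in your step (3): the commutator $2\langle P^+w, P^-w\rangle$ does \emph{not} by itself produce the full weighted gradient term $\tau s^2\int \psi |\nabla w|^2$. Expanding $\langle -a_{ij}D_{ij}w,\, 2\tau s\psi\, a_{kl}D_k h\, D_l w\rangle$ by parts, the $s^2$-level gradient contribution is $2\tau s^2 \int \psi\, |a_{ij}D_iw\, D_j h|^2$ (this is $I_1^1$ in the paper), which controls only the component of $\nabla w$ along $a\nabla h$; the remaining directions enter with indefinite or even negative sign (the paper's estimate for $2\langle P_1w,P_2w\rangle$ in fact carries a $-\frac{5}{4}\tau s^2\int\psi\, a_{ij}D_iwD_jw$ on the right-hand side). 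So pseudoconvexity plus the commutator alone does not close the estimate.

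The missing step is to bring in the energy $\|P_1 w\|^2$ (your $\|P^+w\|^2$), which the expansion of $\|P_\tau w\|^2$ gives you for free, and pair it with $\tau s^2\psi\beta w$: the paper forms the auxiliary inner product $\langle P_1 w,\ \tau s^2\psi\beta(y)w\rangle$ and shows (term $L_1$) that it produces the full positive gradient term $\tau s^2\int\psi\,\beta\, a_{ij}D_iwD_jw$, at the price of a negative $-\tau^3 s^4\int\psi^3\beta^2 w^2$ (term $L_2$) that must then be beaten by the commutator's positive $\tau^3 s^4\int\psi^3\beta^2 w^2$ contribution. Balancing the numerical coefficients ($\tfrac{11}{4}$ against $\tfrac{5}{2}$, $\tfrac{9}{4}$ against $\tfrac{5}{4}$, etc.) so that both the $L^2$ and gradient bulk terms survive with positive margin is the real work of the proof and is not visible in your sketch. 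Apart from this, your observations are sound: retaining the boundary integrals over all of $\partial\mathbb B^+_{1/2}$ is exactly why the proposition has two-sided boundary terms, the noncharacteristicity $\partial_n h = -1 \neq 0$ on $\{y_n=0\}$ is indeed what makes the flat-face contribution controllable, and the absorption of $b_iD_iv$ and $cv$ under $\tau > C_s(|\alpha|+\sqrt{|\lambda|})$ is routine once the principal estimate is in hand.
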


Since the proof of Proposition \ref{Pro4} is lengthy, we postpone the proof in Section 6. Thanks to the Carleman estimates (\ref{useew}), we first show the proof of Lemma \ref{halfspace}.
\begin{proof}[Proof of Lemma \ref{halfspace}]
 Notice that the constant $s$ is fixed independent of $\alpha$ and $\lambda$. We also know $\psi$ is bounded below and above by some constant $C$. We  obtain that
\begin{align}
\| e^{\tau \psi} (- a_{ij} D_{ij}v +b_i D_i v +c v) \|_{L^2(\mathbb B_{1/2}^+)} &+\tau^\frac{3}{2}\| e^{\tau \psi} v \|_{L^2(\partial \mathbb B_{1/2}^+)} + \tau^\frac{1}{2}\| e^{\tau \psi}\nabla v \|_{L^2(\partial \mathbb B_{1/2}^+)} \nonumber \\
&\geq C \tau^\frac{3}{2}\| e^{\tau \psi} v \|_{L^2( \mathbb B_{1/2}^+)}+ C \tau^\frac{1}{2}\| e^{\tau \psi} \nabla v \|_{L^2( \mathbb B_{1/2}^+)}.
 \label{usee}
\end{align}

 The following Caccioppolli inequality holds for the solutions of (\ref{general}) in $\mathbb B^+_{1/2}$,
\begin{align}
 \|\nabla u\|_{L^2(\mathbb B_r^+)} \leq \frac{C(|\alpha|+\sqrt{|\lambda|})}{r}\big ( \| u\|_{L^2(\mathbb B_{2r}^+)}+ \|\nabla u\|_{L^2({\bf B}_{2r})}+ \|{ u}\|_{L^2( {\bf B}_{2r})}\big).
 \label{cacciop}
\end{align}

We select a smooth cut-off function $\eta$ such that $\eta(x)=1$ in $\mathbb B_{1/8}^+$ and $\eta(x)=0$ outside $\mathbb B_{1/4}^+$. Since $u\in C^\infty_0 (\mathbb B_{1/2}^+ )$, substituting $v$ by $\eta u$ in the Carleman estimates (\ref{usee}) and then using the equation (\ref{general}) yields that
\begin{align}
\| e^{\tau \psi}(-a_{ij} D_{ij} \eta u- 2 a_{ij}D_i \eta D_j u+ b_i D_i \eta u  )\|_{L^2(\mathbb B_{1/2}^+)}&+\tau^{\frac{3}{2}} \| e^{\tau \psi}  \eta u \|_{L^2({\bf B}_{1/4})}\nonumber\\  &+
\tau^{\frac{1}{2}} \| e^{\tau \psi} \eta \nabla u  \|_{L^2({\bf B}_{1/4})}\nonumber\\
&\geq C\tau^{\frac{3}{2}} \| e^{\tau \psi} \eta u \|_{L^2(\mathbb B_{1/2}^+)}.
\label{meimei}
\end{align}

We want to find the maximum of $\psi$ in the first term on the left hand side of (\ref{meimei}). Since $h(y)$ is negative in $\mathbb B^+_{1/2}$, then
$$ \max_{\{\frac{1}{8}\leq |y|\leq \frac{1}{4} \}\cap \{y_n\geq 0\}}h(y) =\max_{\{\frac{1}{8}\leq |y| \leq \frac{1}{4} \}} -\frac{|y'|^2}{4}=-\frac{1}{256}.$$
We also need to find a lower bound of $\psi$ for the term on the right hand side of (\ref{meimei}) such that $$-\min_{|y|<a} h(y)-\frac{1}{256}<0$$
for some $0<a<\frac{1}{2}.$
Since $h(y)$ decreases with respect to $y'$ and $y_n$, then the minimum of $h(y)$ is $\hat{h}(a)$ for $|y|<a$, where
$$\hat{h}(a)=-\frac{a^2}{4}+\frac{a^2}{2}-a=\frac{a^2}{4}-a.  $$
Solving the inequality $-\hat{h}(a)<\frac{1}{256}$, we have one solution $a=\frac{1}{256}$.
Set $$\psi_0=e^{-\frac{s}{256}}- e^{s \hat{h}(\frac{1}{256})},$$
then $\psi_0<0. $
Define $$\psi_1=1- e^{s \hat{h}(\frac{1}{256})}.$$
Since $\hat{h}(\frac{1}{256})<0$, then $\psi_1>0$.

Applying the Caccioppolli inequality (\ref{cacciop}), we arrive at
\begin{align}
\exp \{\tau e^{\frac{-s}{256}}\}\|u\|_{L^2(\mathbb B_{1/2}^+)}&+ e^\tau \|u\|_{L^2({\bf B}_{1/3})}+e^\tau \|\nabla u\|_{L^2({\bf B}_{1/3})}   \nonumber\\
&\geq  C\tau \exp\{\tau e^{s\hat{h}(\frac{1}{256})}\}\|u\|_{L^2(\mathbb B_{{1}/{256}}^+)}.
\end{align}
Let
$$   B_1=\|u\|_{L^2(\mathbb B_{1/2}^+)}, $$
$$  B_2=   \|u\|_{L^2({\bf B}_{1/3})}+ \|\nabla u\|_{L^2({\bf B}_{1/3})} , $$
$$ B_3=\|u\|_{L^2(\mathbb B_{{1}/{256}}^+)}. $$
Multiplying both sides of the last inequality with $\exp\{-\tau e^{s\hat{h}(\frac{1}{256})}\}$ leads to
\begin{equation}
e^{\tau  \psi_0  }B_1+ e^{\tau  \psi_1  }B_2\geq C B_3.
\label{BBB}
\end{equation}
We want to incorporate the first term on the left hand side of (\ref{BBB}) into the right hand side. Let
$$ e^{\tau  \psi_0  }B_1\leq \frac{1}{2} C B_3.  $$
Thus, we need to have
$$\tau\geq \frac{1}{\psi_0 } \ln \frac{CB_3}{2B_1}.  $$
Therefore, for such $\tau$,
\begin{equation}
 e^{\tau  \psi_1  }B_2\geq C B_3.
 \label{mistake}
\end{equation}
Recall that the assumption
$$ \tau\geq C(|\alpha|+\sqrt{|\lambda|})  $$
in Proposition \ref{Pro4}. We assume that
\begin{equation}
\tau= C(|\alpha|+\sqrt{|\lambda|})+\frac{1}{\psi_0 } \ln \frac{CB_3}{2B_1}.
\end{equation}
Note that $\psi_0$ and $\psi_1$ are constants. Substituting such $\tau$ in (\ref{mistake}) yields that
\begin{equation}
e^{C(|\alpha|+\sqrt{|\lambda|})} B_1^{\frac{\psi_1}{\psi_1-\psi_0}} B_2^{\frac{-\psi_0}{\psi_1-\psi_0}}\geq CB_3.
\end{equation}

Let $\kappa =\frac{\psi_1}{\psi_1-\psi_0}$. Then the following three-ball type inequality follows as
\begin{equation}
\|u\|_{L^2(\frac{1}{256}\mathbb B^+_1)}\leq e^{C(|\alpha|+\sqrt{|\lambda|})} \|u\|_{L^2(\mathbb B^+_{1/2})}^\kappa \big( \|u\|_{L^2({\bf B}_{1/3})}+ \|\nabla u\|_{L^2({\bf B}_{1/3})}\big)^{1-\kappa}.
\label{nice}
\end{equation}
Since $u\in C^2(\mathbb B^+_{1/2})$ and $\nabla u=\nabla' u+\frac{\partial u}{\partial \nu}$ on the boundary $\mathbb B^+_{1/2}\cap \{y_n=0\}$, the inequality (\ref{nice}) implies the desired estimates (\ref{tool}).
The estimate (\ref{too}) is a consequence of (\ref{tool}). Therefore, the lemma is finished.

\end{proof}
 We are in the position to prove Theorem \ref{th3} with inspirations from \cite{Lin}. Similar doubling estimates for fractional Laplacian equations on product manifolds were shown in \cite{R}.

\begin{proof}[Proof of Theorem \ref{th3}]
We consider the solution $\bar u$ in the equations (\ref{half}) with conditions (\ref{halfcon}). We argue on scale of order one.
We may normalize $\bar u$ as
\begin{equation} \|\bar u \|_{L^2(\mathbb B^+_{1/2})}=1.
\label{normalize} \end{equation} We claim that there exists a positive constant $C>0$ such that the following lower bound holds on the boundary
\begin{align}
\|\bar u \|_{H^1({\bf B}_{1/6})} \geq e^{ -C(|\alpha|+\sqrt{|\lambda|})}.
\label{haoma}
\end{align}
We will need to use the quantitative three-ball inequality (\ref{tool}) on the half balls. Note that $\frac{\partial \bar u }{\partial \nu}=0$ on the boundary $\{x_n=0\}$. We may normalize the inequality (\ref{tool}) as
\begin{equation}
\|\bar u \|_{L^2(\mathbb B^+_{1/512})}\leq e^{ C(|\alpha|+\sqrt{|\lambda|})} \|\bar u \|_{L^2(\mathbb B^+_{1/4})}^\kappa  \|\bar u \|^{1-\kappa}_{H^1({\bf B}_{1/6})}  .
\label{tooll}
\end{equation}
We prove the claim by contradiction.
If the claim is not true, from (\ref{tooll}), for any constant $\hat{ C}>0$, we have
\begin{equation}
\|\bar u \|_{L^2(\mathbb B^+_{1/{512}})}\leq C e^{- \hat{C} (|\alpha|+\sqrt{|\lambda|})}.
\label{abtra}
\end{equation}
Since the doubling estimate  on the half ball has been shown in (\ref{halfdouble}), using the doubling inequality finitely many times, we obtain that
\begin{align}
\|\bar u \|_{L^2(\mathbb B^+_{1/{512}})}&\geq e^{- C (|\alpha|+\sqrt{|\lambda|})} \|\bar u \|_{L^2(\mathbb B^+_{1/2})}\nonumber \\
&\geq e^{-C (|\alpha|+\sqrt{|\lambda|})},
\end{align}
which contradicts the condition (\ref{abtra}) since $\hat{C}$ is an arbitrary constant that can be chosen to be sufficiently large. Thus, the condition (\ref{haoma}) holds.

Next we claim that there exists a constant $C$ such that
\begin{equation}
\|\bar u \|_{L^2( {\bf B}_{1/{5}})}\geq  e^{-C(|\alpha|+\sqrt{|\lambda|})}.
\label{claim}
\end{equation}
We recall the following interpolation inequality in \cite{R} or \cite{BL}. For any small constant $0<\epsilon <1$, there holds
\begin{align}
\|\nabla' w \|_{L^2(\mathbb R^{n-1})} \leq \epsilon^{\frac{3}{2}} (\|\nabla \nabla' w\|_{L^2(\mathbb R^{n}_+)} +\| w\|_{L^2(\mathbb R^{n}_+)})
+ \epsilon^{-\frac{1}{3}} \|w\|_{L^2(\mathbb R^{n-1})},
\label{interpo}
\end{align}
where $\nabla'$ is the derivative for first $n-1$ variables.
We choose $w$ to be $\bar u \eta$, where $\eta$ is a radial cut-off function such that $\eta=1$ in $\mathbb B^+_{1/6}$ and vanishes outside $\mathbb B^+_{1/5}$. Substituting $ w=\bar u  \eta$ in the interpolation inequality  (\ref{interpo}) gives that
\begin{align}
\|\nabla' (\bar u \eta) \|_{L^2(\mathbb R^{n-1})} \leq \epsilon^{\frac{3}{2}} \big(\|\nabla \nabla' ( \bar u \eta) \|_{L^2(\mathbb B^+_{1/5})} +\|  \bar u \eta \|_{L^2(\mathbb B^+_{1/5})}\big)
+ \epsilon^{-\frac{1}{3}} \|\bar u \eta \|_{L^2(  {\bf B}_{1/5})},
\label{mamazel}
\end{align}
Using the fact that $g_{in}=0$ for $i\not=n$ and $\frac{\partial \bar u }{\partial \nu}=0$ on $\{x_n=0\}$, the following Caccioppolli inequality holds,
\begin{align}
 \|\nabla \bar u \|_{L^2(\mathbb B_r^+)} \leq \frac{C(|\alpha|+\sqrt{|\lambda|})}{r}  \| \bar u \|_{L^2(\mathbb B_{2r}^+)}.
 \label{cacc}
\end{align}
Applying the estimates (\ref{cacc}) for second order derivative of $\bar u$ in (\ref{mamazel}), we derive that
\begin{align}
\|\nabla' \bar u   \|_{L^2( {\bf B}_{1/6})} \leq C_0\epsilon^{\frac{3}{2}} C( |\alpha|, \ \sqrt{|\lambda|}) +\epsilon^{-\frac{1}{3}} \|\bar u \|_{L^2( {\bf B}_{1/5})},
\end{align}
where we have used (\ref{normalize}) and $C( |\alpha|, \ \sqrt{|\lambda|})$ is a constant with polynomial growth of  $\alpha$ and $\sqrt{|\lambda|}$.
Adding $\|\bar u  \|_{L^2( {\bf B}_{1/6})}$ to both sides of the last inequality yields that
\begin{align}
\| \bar u   \|_{H^1 ( {\bf B}_{1/6})} \leq C_0 \epsilon^{\frac{3}{2}} C( |\alpha|, \ \sqrt{|\lambda|}) + 2\epsilon^{-\frac{1}{3}} \|\bar u \|_{L^2( {\bf B}_{1/5})}.
\label{hardlife}
\end{align}

To incorporate the first term on the right hand side of the last inequality into the left hand side, we choose $\epsilon$ such that
\begin{equation}
C_0 \epsilon^{\frac{3}{2}}  C( |\alpha|, \ \sqrt{|\lambda|})=\frac{1}{2} \| \bar u  \|_{H^1 ( {\bf B}_{1/6})}.
\end{equation}
That is,
$$ \epsilon=(\frac{\| \bar u  \|_{H^1 ( {\bf B}_{1/6})}}{2 C_0 C( |\alpha|, \ \sqrt{|\lambda|})})^{-2/3}. $$
Therefore, (\ref{hardlife}) turns into
\begin{align}
\|\bar u  \|_{H^1 ( {\bf B}_{1/6})}^{11/9} \leq 4 \big(2 C_0 C( |\alpha|, \ \sqrt{|\lambda|})\big)^{2/9} \|\bar u \|_{L^2( {\bf B}_{1/5})}.
\end{align}

Because of (\ref{haoma}), we infer that
\begin{align}
\| \bar u   \|_{H^1 ( {\bf B}_{1/6})} \leq e^{ C(|\alpha|+\sqrt{|\lambda|})} \|\bar u \|_{L^2( {\bf B}_{1/5})}.
\label{epsilon}
\end{align}
From (\ref{haoma}) again, it also follows that
\begin{align}
\|\bar u \|_{L^2( {\bf B}_{1/5})}\geq e^{ -C(|\alpha|+\sqrt{|\lambda|})},
\label{redouble}
\end{align}
which verifies the claim (\ref{claim}).

Let $\bar \eta$ be a cut-off function  such that $\bar \eta(y)=1$ for $|y|\leq \frac{1}{4}$ and vanishes for $|y|\geq \frac{1}{3}$.  By the Hardy trace inequality and elliptic estimates (\ref{cacc}), it follows that
\begin{align}
\|\bar u \|_{L^2( {\bf B}_{1/4})} &\leq \|\bar \eta \bar u \|_{L^2( {\bf B}_{1/4})}  \leq \|\nabla( \bar \eta \bar u )\|_{L^2( \mathbb R^{n}_+)} \nonumber \\
&\leq C \|\nabla \bar u \|_{L^2( {\mathbb B}^+_{1/3})}+ C \|\bar u \|_{L^2( {\mathbb B}^+_{1/3})}\nonumber \\
&\leq C(|\alpha|+\sqrt{|\lambda|}) \| \bar u \|_{L^2( {\mathbb B}^+_{1/2})}\nonumber \\
&\leq C(|\alpha|+\sqrt{|\lambda|}).
\label{lastt}
\end{align}
Combining the established estimates (\ref{redouble}) and (\ref{lastt}), we have
\begin{equation}
\|\bar u \|_{L^2( {\bf B}_{1/4})}\leq e^{ C(|\alpha|+\sqrt{|\lambda|})} \|\bar u \|_{L^2( {\bf B}_{1/5})}.
\label{night}
\end{equation}
Notice that $u=\bar u $ on ${\bf B}_{1/2}$. By rescaling and diffeomorphism of Fermi exponential map, we arrive at
\begin{align}
\| u \|_{L^2( {\mathbb B}_{2r}(x_0))}  \leq e^{ C(|\alpha|+\sqrt{|\lambda|})} \| u \|_{L^2( {\mathbb B}_{r}(x_0))}
\label{double}
\end{align}
for any $x_0\in \partial\Omega$, ${\mathbb B}_{2r}(x_0)\subset \partial\Omega$, and $r<r_0$ for some $r_0$ depending only on $\partial\Omega$. This completes the proof of Theorem \ref{th3}.

\end{proof}

We will show the upper bounds of nodal sets for Neumann and Robin eigenfunction on the analytic boundary. To achieve it, we need a quantitative inequality on the relation of $L^2$ norm of eigenfunctions on the boundary and on the half balls.
We argue on scale with $\delta=1$ for equations (\ref{half}) with the conditions (\ref{halfcon}).
 Applying quantitative two half-ball and one lower dimensional ball in (\ref{tool}) by replacing $u$ by $\bar u$, and the doubling inequalities in the half ball in (\ref{halfdouble}) finitely many times, we have
\begin{align}
\| \bar u \|_{L^2 (\mathbb B^+_{1/2})} \leq  e^{C(|\alpha|+\sqrt{|\lambda|})}\| \bar u \|^\kappa_{L^2 (\mathbb B^+_{1/2})} \| \bar u \|^{1-\kappa}_{H^1 ({\bf B}_{1/3})}.
\end{align}
Thus, we obtain that
\begin{align}
\| \bar u \|_{L^2 (\mathbb B^+_{1/8})} \leq  e^{C(|\alpha|+\sqrt{|\lambda|})} \|\bar u \|_{H^1 ({\bf B}_{1/3})}.
\label{impro}
\end{align}

By the arguments in deriving the estimates (\ref{epsilon}), we can improve (\ref{impro}) as
\begin{align}
\|\bar u \|_{L^2 (\mathbb B^+_{1/8})} \leq  e^{C(|\alpha|+\sqrt{|\lambda|})} \| \bar u \|_{L^2 ({\bf B}_{2/5})}.
\label{rescal}
\end{align}

We are ready to provide the proof of upper bounds for the boundary nodal sets of Neumann eigenfunctions.
\begin{proof}[Proof of theorem \ref{thnew}] To get the measure estimates of nodal sets of Neumann eigenfunctions on the boundary, we perform a standard lifting argument. Let
\begin{align}
\hat{w}(x, t)=e^{\sqrt{\lambda} t} u(x).
\label{lift}
\end{align}
Then $\hat{w}(x, t)$ satisfies the following equation
\begin{equation}
\left \{ \begin{array}{rll}
-\hat{\triangle}  \hat{w}=0 \quad &\mbox{in} \ {\Omega}\times (-\infty, \ \infty),  \medskip\\
\frac{\partial\hat{w}}{\partial \nu}=0 \quad &\mbox{on} \ {\partial\Omega}\times (-\infty, \ \infty)
\end{array}
\right.
\label{error}
\end{equation}
with $\hat{\triangle}=\triangle +\partial^2_t$. By straightening the boundary $\partial\Omega$ locally, rescaling and translation. we may assume that $(p, 0, t)\in \big(\partial \mathbb B^+_{1/16}\cap \{x_n=0\}\big)\times (-\frac{1}{16}, \frac{1}{16}) $ with $p\in \mathbb R^{n-1}.$  From elliptic estimates in Lemma 2.3 in \cite{MN} or Corollary 7.2 in \cite{R},  we obtain that
\begin{align}
| \frac{\nabla^{'\bar\alpha} \hat{w} (p, 0, 0)}{ \bar\alpha !} |\leq C \hat{C}^{k}  \|\hat{w}   \|_{L^\infty \big(\mathbb B^+_{1/8}\times(-\frac{1}{8}, \frac{1}{8})\big)},
\end{align}
where
$ |\bar \alpha|=k$, the derivative is taken with respect to $x'$ in $\partial \mathbb B^+_{1/16}\cap \{x_n=0\}$, and $\hat{C}>1$ depends on $\Omega$.
By the definition of $\hat{w}$, we have that
 \begin{align}
| \frac{\nabla^{'\bar\alpha} {u} (p, 0)}{ \bar\alpha !} |\leq C \hat{C}^{k} e^{C\sqrt{\lambda}} \|u   \|_{L^\infty (\mathbb B^+_{1/8})}.
\end{align}
Then $\bar u (p, 0)$ is real analytic for any $(p, 0)\in \partial \mathbb B^+_{1/16}\cap\{x_n=0\}$. We may consider $p$ as the origin in $\mathbb R^{n-1}$. Summing up a geometric series gives a holomorphic extension of ${ u}$ with
\begin{equation}
\sup_{ |z|\leq  \frac{1}{2\hat{C}} }|  u (z)|\leq e^{C\sqrt{\lambda}} \| u   \|_{L^\infty (\mathbb B^+_{1/8})},
\label{zeld}
\end{equation}
where $\frac{1}{2\hat{C}}<\frac{1}{8}$ and $z\in \mathbb C^{n-1}$. The estimates (\ref{rescal}) also hold for Neumann boundary conditions with $\alpha=0$. Hence,
it follows that
\begin{align}
\|u \|_{L^2 (\mathbb B^+_{1/8})} \leq  e^{C\sqrt{\lambda}} \|  u \|_{L^2 ({\bf B}_{2/5})}.
\end{align}
Note that ${\bf B}_r$ is denoted as the ball in $\mathbb R^{n-1}$ with radius $r$.
Taking the boundary doubling inequality (\ref{night}) with $\alpha=0$, (\ref{zeld}) and elliptic estimates into consideration, by finite steps iterations,
we conclude that
\begin{align}
\sup_{ |z|\leq  \frac{1}{2\hat{C}}}|  u (z)|\leq e^{C\sqrt{\lambda}}\sup_{ x\in {\bf B}_{\frac{1}{4\hat{C}}}}|  u (x)|.
\end{align}
By rescaling arguments, we derive that
\begin{align}
\sup_{ |z|\leq 2 r}|  u (z)|\leq e^{C\sqrt{\lambda}}\sup_{ x\in  {\bf B}_r}|  u (x)|,
\label{eventu}
\end{align}
where $0<r<\hat{r}_0$ and $\hat{r}_0$, $C$ depends on $\Omega$.

Thanks to the doubling inequality (\ref{eventu}) and the growth control lemma for zeros, i.e. Lemma \ref{wwhy}, we are ready to give the proof of Theorem \ref{thnew}. Since $r$ does not depend on $\lambda$,
we can argue on scales of $r=1$. Let $p\in {\bf B}_{1/4}\subset \mathbb R^{n-1}$ be the point where the supremum of $| u |$ is achieved. After rescaling, we assume that $|u(p)|=1$.
For each direction $\omega\in S^{n-2}$, we consider the function $$ u_\omega(z)= u (p+z\omega), \quad \quad z\in \mathcal{B}_1(0)\subset \mathbb{C}.$$ By the doubling inequality (\ref{eventu}) and Lemma \ref{wwhy}, we obtain that
\begin{align}
&\sharp\{ x\in {\bf B}_{1/2}(p)\subset \mathbb R^{n-1} | x-p \ \mbox{is parallel to} \ \omega \ \mbox{and} \  u (x)=0\} \nonumber \\
&\leq \sharp\{ z\in \mathcal{B}_{1/2}(0)\subset \mathbb{C}| u_\omega(z)=0\}  \nonumber \\
&\leq C\sqrt{\lambda}.
\end{align}
By the integral geometry estimates, we further derive that
\begin{align}
H^{n-2} \{ x\in {\bf B}_{1/2}(p)|  u (x)=0\} &\leq \int_{S^{n-2}} C\sqrt{\lambda}\, d\omega \nonumber\\
&\leq C\sqrt{\lambda}.
\end{align}
Thus, we show the upper bound of nodal sets
\begin{align}
H^{n-2} \{ x\in {\bf B}_{1/4}(0)|  u (x)=0\} \leq C\sqrt{\lambda}.
\end{align}
By rescaling, it also implies that
\begin{align}
H^{n-2} \{ \mathbb B_{r_0}(p)\subset \partial\Omega | u(x)=0\} \leq C\sqrt{\lambda}.
\end{align}
for some $r_0$ depending only on $\Omega$ and for any $p\in \Omega$.
 Since the boundary $\partial\Omega$ is compact, by finite coverings, the theorem is arrived.
\end{proof}

By the strategy in the proof of Theorem \ref{thnew},
we consider the boundary nodal sets of Robin eigenfunctions (\ref{robin}).
\begin{proof}[Proof of Corollary \ref{th2}]
As in section 4, using lifting arguments, we first get rid of $\lambda$ and $\alpha$. Let
\begin{align*}
\hat{w}(x, t, s)=e^{\alpha t} e^{(|\alpha|i+\sqrt{\lambda})s} u(x).
\end{align*}
Applying Fermi coordinates to flatten the boundary and the rescaling arguments, we have the following equation locally
\begin{equation}
\left \{ \begin{array}{rll}
-\hat{\triangle}  \hat{w}=0 \quad &\mbox{in} \ \mathbb B^+_1\times (-\infty, \ \infty)\times(-\infty, \ \infty) ,  \medskip\\
\frac{\partial\hat{w}}{\partial x_n}+\frac{\partial\hat{w}}{\partial t}=0 \quad &\mbox{on} \ \partial\mathbb B^+_1\cap\{x_n=0\}\times (-\infty, \ \infty)\times (-\infty, \ \infty)
\end{array}
\right.
\end{equation}
where $\hat{\triangle}=\triangle +\partial^2_t+\partial_s^2$. By Cauchy-Kovalesvsky theorem in \cite{H}, we can extend $\hat{w}$ analytically across the boundary $\partial\Omega\times [-1, \ -1] \times [-1, \ 1]$ in $\widehat{\Omega}_1 \times [-1, \ -1] \times [-1, \ 1]$, where $\widehat{\Omega}_1=\{x\in \mathbb R^n| \dist(x, \ \Omega)\leq \delta\}$ and $0<\delta<\frac{1}{24}$ depending only on $\Omega$. Choose any point
$(p, 0)\in \partial\mathbb B^+_{\frac{1}{16}}\cap \{x_n=0\}$.
Using elliptic estimates for $-\hat{\triangle}  \hat{w}=0$, we have
\begin{align}
| \frac{\nabla^{'\bar\alpha} \hat{w} (p, 0, 0, 0)}{ \bar\alpha !} |&\leq C \hat{C}^{k}  \|\hat{w}   \|_{L^\infty \big(\mathbb B_{\delta}(p, 0)\times(-\delta, \delta)\times(-\delta, \delta)\big)} \nonumber \\
&\leq C \hat{C}^{k}  \|\hat{w}   \|_{L^\infty \big(\mathbb B^+_{\frac{1}{8}}\times(-\frac{1}{8}, \frac{1}{8})\times(-\frac{1}{8}, \frac{1}{8})\big)},
\label{kaoz}
\end{align}
where $|\bar\alpha|=k$ and $\hat{C}>1$ depends on $\Omega$. In the second inequality of (\ref{kaoz}), we apply the growth control estimate as (\ref{agree}) from the analytic continuation. From the definition of $\hat{w}(x, t, s)$, we derive that
\begin{align}
| \frac{\nabla^{'\bar\alpha} u (p, 0)}{ \bar\alpha !} |
&\leq C \hat{C}^{k} e^{C(\alpha+\sqrt{|\lambda|})} \|u  \|_{L^\infty(\mathbb B^+_{\frac{1}{8}})}.
\label{kizel}
\end{align}
The rest of the proof follows from Theorem 3 by using the boundary doubling inequality (\ref{night}) and Lemma \ref{wwhy}. Based on bounds in (\ref{kizel}), we can show the upper bounds of boundary nodal sets in the Corollary.
\end{proof}

At last, we show the sharpness of the upper bound of boundary nodal sets for Neumann eigenfunctions.
\begin{remark}
The upper bound for boundary nodal sets of Neumann eigenfunctions in (\ref{Sha}) is sharp.
\end{remark}
 We first consider the Neumann eigenvalue problem (\ref{Neumann}) in a disc with radius one in $\mathbb R^2$. By separation of variables, we can write the eigenfunction  as $u(x)=R(r) \Phi(\theta)$. Direct calculations show that
\begin{equation}
R''(r)+\frac{1}{r}R'(r)+(\lambda-\frac{k^2}{r^2})R(r)=0
\label{bess1}
\end{equation}
and
\begin{equation}
-\Phi''(\theta)=k^2\Phi(\theta),   \quad \quad k=1, 2, \cdots.
\end{equation}
Let $y=\sqrt{\lambda}r$ and $J(y)= R(\frac{y}{\sqrt{\lambda}})$. It follows that
\begin{equation}
y^2 J''(y)+y J'(y)+(y^2-k^2)J(y)=0,
\end{equation}
 which is the well-known Bessel's equation. From the  Neumann boundary condition on $r=1$, we derive that $R'(1)=J'(\sqrt{\lambda})=0$.
 Let $J_k(y)$ be the $k$th Bessel function and $\alpha_{k,j}$ be the $j$th zeros of $J'_k(y)$.
 Then the eigenfunctions are given by $J_k(\alpha_{k,j}r)\sin(k\theta)$ or $J_k(\alpha_{k,j}r)\cos(k\theta)$ and the eigenvalues $\lambda=\alpha_{k,j}^2$.  It is known in \cite{O} that $\alpha_{k,j}\approx k+\delta_j k^\frac{1}{3}+O(k^{-\frac{1}{3}})$ for large $k$ and fixed $j$, where $\delta_j $ is some known constant depending on $j$. On the boundary of the disc with $r=1$. There are at most $k$ nodal points. From the asymptotic behavior of eigenvalue $\alpha_{k,j}$, we learn that the conclusion (\ref{Sha}) is sharp in $n=2$.

For $n\geq 3$, we  consider again the Neumann eigenvalue problem in a ball with radius 1. By separation of variables, let $u(x)= R(r)\Phi(\omega)$,
Then $R(r)$ satisfies the equations
\begin{equation}
R''(r)+\frac{n-1}{r}R'(r)+(\lambda-\frac{\gamma}{r^2})R(r)=0
\label{besse}
\end{equation}
and
\begin{equation}
-\triangle_\omega \Phi=\gamma \Phi \quad \quad \mbox{on} \ S^{n-1},
\end{equation}
where $\gamma=k(k+n-2)$ is the eigenvalue for the spherical harmonics on $S^{n-1}$. By a standard scaling, let $W(r)=r^{\frac{n-2}{2}} R(r)$.
Equation (\ref{besse}) is reduced to the equation
\begin{equation}
W''(r)+\frac{1}{r}W(r)+\big(\lambda-(\gamma+\frac{(n-2)^2}{4})/{r^2}  \big)W(r)=0.
\end{equation}
Thus, as in the dimension $n=2$, we can write the solutions as
$$W(r)=J_{\sqrt{\gamma+\frac{(n-2)^2}{4}}}(\sqrt{\lambda}r),$$
where $J_{\sqrt{\gamma+\frac{(n-2)^2}{4}}}(y)$ is the Bessel function.
That is,
$$ R(r)= \frac{J_{\sqrt{\gamma+\frac{(n-2)^2}{4}}}(\sqrt{\lambda}r)}{r^{\frac{n-2}{2}}}. $$
The Neumann boundary condition $$\frac{\partial u}{\partial \nu}=\frac{\partial (R(r)\Phi(\omega))}{\partial r}=0$$ on $r=1$ implies that
\begin{equation}
-\frac{n-2}{2}J_{\sqrt{\gamma+\frac{(n-2)^2}{4}}}(\sqrt{\lambda})+J'_{\sqrt{\gamma+\frac{(n-2)^2}{4}}}(\sqrt{\lambda})\sqrt{\lambda}=0.
\label{bess}
\end{equation}
The measure of nodal sets for spherical harmonics $\Phi(\omega)$ is known in \cite{DF} as
\begin{equation}
c\sqrt{\gamma}\leq H^{n-2}\{ \omega\in S^{n-1}|\Phi(\omega)=0\}\leq C\sqrt{\gamma}.
\label{sphere}
\end{equation}
Let $C_{\sqrt{\gamma+\frac{(n-2)^2}{4}}, \  k}$ be the $k$th positive zeros of the solution in (\ref{bess}), it is shown in Theorem 2.1 and Theorem 2.3 in \cite{ES} that $$C^2_{\sqrt{\gamma+\frac{(n-2)^2}{4}}, \ 1}\approx {\gamma} $$ as $\gamma$ is large. Let  $\sqrt{\lambda}=C_{\sqrt{\gamma+\frac{(n-2)^2}{4}}, \ 1}$. It follows from (\ref{sphere}) that the conclusion in the Theorem \ref{thnew} is optimal for $n\geq 3$.

\section{Global Carleman estimates}
In this section, we prove the quantitative global Carleman estimates in Proposition \ref{Pro4}. Interested readers may refer to the survey \cite{K} and \cite{LL}
for more exhaustive literature for local and global Carleman estimates.
We will use the integration by parts arguments repeatedly to get the desired estimates. Recall that
the weight function $$\psi(y)= e^{sh(y)} $$ with $$h(y)=-\frac{|y'|^2}{4}+\frac{y^2_n}{2}-y_n.$$
Actually, the weight function can be chosen as any $h\in C^2$ such that $|\nabla h|\not=0$ in $\mathbb B^+_{1/2}$ to have the Carleman estimates in Proposition \ref{Pro4}.
Recall  the assumptions about $b(y)$ and $c(y)$ are
\begin{equation}
\left\{ \begin{array}{lll}
\| {b}\|_{W^{1, \infty} (\mathbb B^+_{1/2})}\leq C(|\alpha|+1), \medskip \\
\| {c}\|_{W^{1, \infty}(\mathbb B^+_{1/2})}\leq C(\alpha^2 +|\lambda|).
\end{array}
\right.
\label{aab}
\end{equation}

\begin{proof}[Proof of Proposition \ref{Pro4}]
Choose
\begin{equation} w(y)= e^{\tau \psi(y)} v(y).
\label{wvv}\end{equation} Since $v(y)\in C^\infty (\mathbb B^+_{1/2})$, then $w(y)\in C^\infty (\mathbb B^+_{1/2})$. We introduce  a second order elliptic operator $$P_0=- a_{ij} D_{ij} +b_i(y) D_i +c(y).$$

Define the conjugate operator as
$$ P_\tau w= e^{\tau\psi(y)} P_0( e^{-\tau\psi(y)}w).  $$
Direct calculations show that
\begin{align}
P_\tau w=&-a_{ij}D_{ij}w+2\tau a_{ij}D_i \psi D_j w+ \tau a_{ij} D_{ij} \psi w \nonumber \\ & -\tau^2 a_{ij}D_i\psi D_j \psi w-\tau b_i(y) D_i\psi w+b_i(y)D_iw+c(y)w \nonumber \\
=&-a_{ij}D_{ij}w+2\tau s\psi a_{ij}D_i h D_j w-\tau^2 s^2\psi^2\beta(y) w+\tau\psi a(y,s)w\nonumber \\&-\tau s\psi b_i(y)D_i h w +b_i(y)D_iw+c(y)w,
\end{align}
where $$\beta(y)=a_{ij}D_i h D_j h,$$
\begin{equation} a(y,s)=s^2 \beta(y)+s a_{ij}D_{ij}h.
\label{axs}
\end{equation}
Note that $\beta(y)\geq C$ for some positive constant $C$ on $\mathbb B^+_{1/2}$ by the uniform ellipticity.
We split the expression $P_\tau w $ into the sum of two expressions $P_1 w$ and $P_2 w$, where
$$ P_1 w=-a_{ij} D_{ij}w-\tau^2 s^2\psi^2\beta(y) w- \tau s\psi b_i(y)D_i h w +c(y)w, $$
$$ P_2 w=2\tau s\psi a_{ij}D_i h D_j w+b_i(y)D_iw.  $$
Then
\begin{equation}
P_\tau w= P_1 w+ P_2 w+ \tau\psi a(y,s)w.
\end{equation}
We compute the $L^2$ norm of $P_\tau w$. By triangle inequality, we have
\begin{align}
\|P_\tau w\|^2&=\|P_1 w+P_2 w+\tau\psi a(y,s)w\|^2 \nonumber \\
&\geq \|P_1 w\|^2+ \|P_2 w\|^2+ 2 \langle P_1 w, \  P_2 w\rangle -\|\tau\psi a(y,s)w\|^2.
\label{keyinner}
\end{align}
Later on, we will absorb the term $\|\tau\psi a(y,s)w\|^2$.  Now we are going to derive a lower bound for the inner product in (\ref{keyinner}).
Let's write
\begin{equation}
\langle P_1 w, \  P_2 w\rangle=\sum^4_{k=1} I_k+\sum^4_{k=1} J_k,
\label{right}
\end{equation}
where
\begin{align} &I_1= \langle -a_{ij} D_{ij}w, \  2\tau s\psi a_{ij}D_i h D_j w    \rangle, \nonumber \\
 &I_2=  \langle -\tau^2 s^2\psi^2\beta(y) w, \      2\tau s\psi a_{ij}D_i h D_j w         \rangle,  \nonumber \\
 &I_3=  \langle  - \tau s\psi b_i(y)D_i h w, \   2\tau s\psi a_{ij}D_i h D_j w     \rangle,   \nonumber \\
& I_4= \langle c(y)w, \      2\tau s\psi a_{ij}D_i h D_j w\rangle,     \nonumber \\
 &J_1=  \langle -a_{ij} D_{ij}w, \   b_i(y)D_iw \rangle,  \nonumber \\
& J_2= \langle  -\tau^2 s^2\psi^2\beta(y) w, \   b_i(y)D_iw \rangle,   \nonumber \\
 &J_3=\langle  - \tau s\psi b_i(y)D_i h w, \  b_i(y)D_iw \rangle, \nonumber \\
 & J_4= \langle c(y)w, \   b_i(y)D_iw \rangle. \end{align}
We will estimate each term on the right hand side of (\ref{right}). Performing the integration by parts shows that
\begin{align}
I_1=&2\tau s^2 \int_{\mathbb B_{1/2}^+} \psi a_{ij}D_i wD_j h a_{kl} D_k h D_l w\,dy+2\tau s \int_{\mathbb B_{1/2}^+} D_j( a_{ij} a_{kl} D_k h) \psi D_iw
D_lw \,dy \nonumber \\ &+2\tau s\int_{\mathbb B_{1/2}^+}\psi a_{ij}D_i w a_{kl} D_k h D_{lj} w\,dy-2\tau s\int_{\partial \mathbb B_{1/2}^+} \psi a_{kl} D_k h D_l w a_{ij} D_iw \nu_j \,dS\nonumber \\
=&I_1^{1}+I_1^2+ I_1^3+I_1^4.
\label{lala}
\end{align}
The first term $I_1^{1}$ can be controlled as
\begin{align}
I_1^{1}&=2\tau s^2 \int_{\mathbb B_{1/2}^+}\psi | a_{ij} D_iw D_j h|^2 \, dy \nonumber \\ \label{lala1}
&\geq 0.
\end{align}
Applying the integration by parts argument, the third term $I_1^3$ can be computed as
\begin{align}
I_1^3&=\tau s \int_{\mathbb B_{1/2}^+}\psi  a_{ij}a_{kl}D_k h D_l (D_i w D_j w)\,dy \nonumber  \\
&=-\tau s\int_{\mathbb B_{1/2}^+} \psi D_l (a_{ij}a_{kl}D_k h) D_i w D_j w\,dy-\tau s^2 \int_{\mathbb B_{1/2}^+}\psi \beta(y) a_{ij} D_i w D_j w\,dy \nonumber  \\
&+\tau s\int_{\partial \mathbb B_{1/2}^+}\psi  a_{ij} D_i w D_j w a_{kl} D_k h \nu_l dS.
\label{lala2}
\end{align}
Combining (\ref{lala}), (\ref{lala1}) and (\ref{lala2}), and using the fact that $\|a_{ij}\|_{C^1}$ is bounded,   we can estimate $I_1$ from below
\begin{align}
I_1&\geq 2\tau s^2 \int_{\mathbb B_{1/2}^+}\psi | a_{ij} D_iw D_j h|^2 \, dy-\tau s^2 \int_{\mathbb B_{1/2}^+}\psi \beta(y)  a_{ij} D_i w D_j w\,dy \nonumber \\
&-C\tau s  \int_{\mathbb B_{1/2}^+}\psi |\nabla w|^2\, dy-C\tau s\int_{\partial \mathbb B_{1/2}^+}\psi |\nabla w|^2\, dy.
\end{align}
Thus,
\begin{align}
I_1 \geq-\tau s^2 \int_{\mathbb B_{1/2}^+}\psi \beta  a_{ij} D_iw D_j w \, dy- C\tau s  \int_{\mathbb B_{1/2}^+}\psi |\nabla w|^2\, dy-C\tau s\int_{\partial \mathbb B_{1/2}^+}\psi |\nabla w|^2\, dy.
\label{II1}
\end{align}

Now we compute the term $I_2$ using integration by parts argument,
\begin{align}
I_2&=-\tau^3s^3 \int_{\mathbb B_{1/2}^+}\psi^3 \beta(y) a_{ij}D_i h D_j w^2 \, dy \nonumber  \\
&=3 \tau^3 s^4 \int_{\mathbb B_{1/2}^+}\psi^3 \beta(y)  a_{ij} D_i h D_j h w^2 \, dy +\tau^3 s^3 \int_{\mathbb B_{1/2}^+} \psi^3 D_j (\beta a_{ij} D_i h) w^2 \, dy \nonumber  \\
&-\tau^3 s^3 \int_{\partial \mathbb B_{1/2}^+}\psi^3 \beta(y) w^2 a_{ij} D_i h \nu_j \, dS \nonumber \\
&\geq 3 \tau^3 s^4 \int_{\mathbb B_{1/2}^+}\psi^3 \beta^2(y)  w^2 \, dy- C\tau^3 s^3  \int_{\mathbb B_{1/2}^+}\psi^3   w^2 \, dy-C\tau^3 s^3  \int_{\partial \mathbb B_{1/2}^+}\psi^3   w^2 \, dS.
\end{align}
Choosing $s$ large enough and noting that $\beta(y)\geq C$, we deduce that
\begin{align}
I_2\geq \frac{17}{6} \tau^3 s^4 \int_{\mathbb B_{1/2}^+}\psi^3 \beta^2 w^2 \, dy-C\tau^3 s^3  \int_{\partial \mathbb B_{1/2}^+}\psi^3   w^2 \, dS.
\label{II2}
\end{align}

For the term $I_3$, using the integration by parts argument leads to
\begin{align}
I_3&=-\tau^2 s^2\int_{\mathbb B_{1/2}^+} \psi^2 b_k(y) D_k h a_{ij} D_i h D_j w^2\,dy \nonumber \\
&=\tau^2 s^2 \int_{\mathbb B_{1/2}^+} D_j(\psi^2 b_k(y) D_k h a_{ij} D_i h) w^2\,dy- \tau^2 s^2 \int_{\partial \mathbb B_{1/2}^+}
\psi^2w^2 b_k(y) D_k h a_{ij} D_i h\nu_j \,dS.
\end{align}
Making use of the assumption of (\ref{aab}) gives that
\begin{align}
I_3\geq -C(|\alpha|+1) \tau^2 s^3 \int_{\mathbb B_{1/2}^+} \psi^2 w^2\,dy-C(|\alpha|+1) \tau^2 s^2 \int_{\partial \mathbb B_{1/2}^+}
\psi^2w^2 \,dS.
\label{II3}
\end{align}

We proceed to estimate the term $I_4$. Using integration by parts shows that
\begin{align}
I_4 &=\tau s \int_{ \mathbb B_{1/2}^+} c(y) \psi  a_{ij} D_i h D_j w^2 \,dy \nonumber\\
& =\tau s \int_{ \mathbb B_{1/2}^+} \psi a_{ij} D_jc(y)   D_i h  w^2  \,dy-\tau s^2 \int_{ \mathbb B_{1/2}^+} c(y) \psi
a_{ij} D_j h   D_i h w^2\,dy\nonumber\\
&-\tau s \int_{ \mathbb B_{1/2}^+}c(y)\psi  D_j( a_{ij} D_i h)w^2\,dy+\tau s \int_{\partial \mathbb B_{1/2}^+} c(y) \psi w^2 a_{ij} D_i h \nu_j \, dS.
\end{align}

Again, the assumptions of (\ref{aab}) leads to
\begin{align}
I_4\geq -C\tau s^2(\alpha^2+|\lambda|) \int_{ \mathbb B_{1/2}^+}\psi w^2\,dy -C\tau s(\alpha^2+|\lambda|) \int_{ \partial \mathbb B_{1/2}^+}\psi w^2\,dy.
\label{II4}
\end{align}

Together with the estimates on each $I_k$ from (\ref{II1}) to (\ref{II4}), using the assumption that $\tau> C_s (|\alpha|+\sqrt{|\lambda|})$ for some $C_s$ depending on $s$, we arrive at
\begin{align}
\sum^4_{k=1}I_k&\geq \frac{14}{5} \tau^3 s^4 \int_{\mathbb B_{1/2}^+}\psi^3\beta^2 w^2 \, dy-C\tau^3 s^3  \int_{\partial \mathbb B_{1/2}^+}\psi^3   w^2 \, dS-C\tau s
\int_{\partial \mathbb B_{1/2}^+}\psi |\nabla w|^2 \, dS  \nonumber \\
&-\tau s^2 \int_{\mathbb B_{1/2}^+}\psi \beta  a_{ij} D_iw D_j w \, dy- C\tau s  \int_{\mathbb B_{1/2}^+}\psi |\nabla w|^2\, dy.
\label{III}
\end{align}

Next we continue to estimate the integration about $J_k$ using the strategy of integration by parts. Direct computations show that
\begin{align}
J_1&=\int_{ \mathbb B_{1/2}^+} D_j(a_{ij} b_k) D_i w D_k w\,dy+ \frac{1}{2} \int_{ \mathbb B_{1/2}^+} a_{ij} b_k D_k( D_i w D_j w) \,dy \nonumber \\
&-\int_{ \partial \mathbb B_{1/2}^+} b_k D_k w a_{ij} D_i w \nu_j\, dS \nonumber \\
& =\int_{ \mathbb B_{1/2}^+} D_j(a_{ij} b_k) D_i w D_k w\,dy- \frac{1}{2} \int_{ \mathbb B_{1/2}^+} D_k (a_{ij} b_k) D_i w D_j w \,dy \nonumber \\
&+\frac{1}{2} \int_{ \partial \mathbb B_{1/2}^+} a_{ij} D_i w D_j w b_k \nu_k \,dS-  \int_{ \partial \mathbb B_{1/2}^+}b_k D_k w  a_{ij}  D_i w \nu_j \,dS.
\end{align}
Thus, from the assumption of $b_i$,
\begin{align}
J_1 \geq-C(|\alpha|+1)\int_{ \mathbb B_{1/2}^+} |\nabla w|^2\,dy-C(|\alpha|+1) \int_{ \partial \mathbb B_{1/2}^+}|\nabla w|^2 \,dS.
\label{JJ1}
\end{align}

For the term $J_2$, integration by parts argument yields that
\begin{align}
J_2&=-\frac{1}{2}\tau^2 s^2\int_{ \mathbb B_{1/2}^+} \psi \beta(y) b_i(y) D_i w^2 \,dy \nonumber \\
&=\frac{\tau^2 s^3}{2} \int_{ \mathbb B_{1/2}^+} \psi \beta(y) b_i D_i h w^2 \,dy+\frac{\tau^2 s^2}{2} \int_{ \mathbb B_{1/2}^+} \psi  D_i  \beta(y) b_i(y)w^2\,dy \nonumber \\
&+\frac{\tau^2 s^2}{2}\int_{ \mathbb B_{1/2}^+} \psi \beta(y)  D_i b_i(y) w^2 \,dy-\frac{\tau^2 s^2}{2} \int_{ \partial \mathbb B_{1/2}^+} \psi \beta(y) w^2 b_i(y) \nu_j \,d S.
\end{align}
Therefore, we can show that
\begin{align}
J_2\geq -C\tau^2 s^3 (|\alpha|+1) \int_{ \mathbb B_{1/2}^+} \beta\psi  w^2 \,dy - C\tau^2 s^2 (|\alpha|+1) \int_{\partial \mathbb B_{1/2}^+} \psi  w^2 \,dS.
\label{JJ2}
\end{align}

In the same way, we can show that
\begin{align}
J_3 &=-\frac{\tau s}{2} \int_{ \mathbb B_{1/2}^+} \psi b_k D_k h b_i D_i w^2\,dy \nonumber \\
&=\frac{\tau s}{2}  \int_{ \mathbb B_{1/2}^+} D_i ( \psi b_k D_k h b_i) w^2\,dy -\frac{\tau s}{2}\int_{ \partial \mathbb B_{1/2}^+}  \psi w^2
b_k D_k h b_i \nu_j \,d S.
\end{align}

We can control $J_3$ below as
\begin{align}
J_3 \geq -C\tau s^2 (|\alpha|+1)^2 \int_{ \mathbb B_{1/2}^+} \psi  w^2\,dy -C\tau s (|\alpha|+1)^2  \int_{ \partial \mathbb B_{1/2}^+}  \psi w^2\,d S.
\label{JJ3}
\end{align}

Similarly, applying the integration by parts leads to
\begin{align}
J_4 &=\frac{1}{2} \int_{ \mathbb B_{1/2}^+} c(y)  b_i D_i w^2\,dy \nonumber \\
&=-\frac{1}{2}\int_{ \mathbb B_{1/2}^+} D_ic(y) b_i w^2\,dy-\frac{1}{2}\int_{ \mathbb B_{1/2}^+} c(y) D_i b_i w^2 \,dy \nonumber \\
&
+\frac{1}{2} \int_{ \partial \mathbb B_{1/2}^+} c(y) w^2 b_i \nu_i \, dS.
\end{align}
Then we can obtain that
\begin{align}
J_4 \geq -C(\alpha^2+|\lambda|)(|\alpha|+1) \int_{ \mathbb B_{1/2}^+}  w^2 \,dy -C(\alpha^2+|\lambda|)(|\alpha|+1) \int_{\partial \mathbb B_{1/2}^+}  w^2\, dS.
\label{JJ4}
\end{align}

Using the fact that $\tau >  C_s(|\alpha|+\sqrt{|\lambda|})$ for $C_s$ depending on $s$, and  summing up the estimates from (\ref{JJ1}) to (\ref{JJ4}) gives that
\begin{align}
\sum^4_{k=1} J_k&\geq -C(|\alpha|+1) \int_{ \mathbb B_{1/2}^+}\psi |\nabla w|^2\,dy- C\tau^2 (|\alpha|+1) s^3 \int_{ \mathbb B_{1/2}^+} \psi w^2\,dy \nonumber \\
& -C\tau^2 s^2 (|\alpha|+1)  \int_{ \partial \mathbb B_{1/2}^+}\psi w^2\,dS-C (|\alpha|+1)  \int_{ \partial \mathbb B_{1/2}^+}|\nabla w|^2\,dS.
\label{JJJ}
\end{align}

Recall the inner product (\ref{right}). Combining (\ref{III}), (\ref{JJJ}) and using the the assumption $\tau >  C_s(|\alpha|+\sqrt{|\lambda|})$ again, we derive that
\begin{align}
\langle P_1 w, \  P_2 w\rangle& \geq  \frac{11}{4}\tau^3 s^4 \int_{\mathbb B_{1/2}^+}\psi^3\beta^2 w^2 \, dy-\frac{5}{4}\tau s^2 \int_{ \mathbb B_{1/2}^+}\psi a_{ij} D_i w D_j w\,dy\nonumber \\
&-
C\tau^3 s^3  \int_{\partial \mathbb B_{1/2}^+}\psi^3   w^2 \, dS-C\tau s
\int_{\partial \mathbb B_{1/2}^+}\psi |\nabla w|^2 \, dS.
\label{inner}
\end{align}

We want to control the gradient term on the second term on the right hand side of (\ref{inner}). To this end, we consider the following inner product
\begin{equation}
\langle P_1 w, \ \tau s^2\psi\beta(y) w\rangle =\sum^4_{k=1} L_k,
\label{product}
\end{equation}
where
\begin{align}
&L_1=\langle -a_{ij}D_{ij}w, \ \tau s^2 \psi \beta w\rangle,\nonumber\\
&L_2=\langle -\tau^2 s^2\psi^2 \beta w, \tau s^2\psi\beta w\rangle=-\tau^3 s^4\int_{ \mathbb B_{1/2}^+}\psi^3 \beta^2 w^2\,dy,
\label{LL2}
\end{align}
\begin{align}
L_3&=\langle -\tau s\psi  b_i D_i h w,\ \tau s^2\psi \beta w\rangle\nonumber\\
&=-\tau^2 s^3\int_{ \mathbb B_{1/2}^+}\psi^2\beta b_i D_i hw^2\,dy\nonumber\\
&=-C\tau^2 s^3(|\alpha|+1) \int_{ \mathbb B_{1/2}^+}\psi^2 w^2\,dy,
\label{LL3}
\end{align}
and
\begin{align}
L_4&=\langle c(y) w,\ \tau s^2 \psi\beta w  \rangle\nonumber\\
&=\tau s^2 \int_{ \mathbb B_{1/2}^+}c(y)\psi \beta w^2\,dy\nonumber\\
&\geq -C\tau s^2(\alpha^2+|\lambda|)  \int_{ \mathbb B_{1/2}^+}\psi \beta w^2\,dy.
\label{LL4}
\end{align}

We want to find out a lower estimate for $L_1$ to include the gradient terms. It follows from integration by parts and Cauchy-Schwartz inequality  that
\begin{align}
L_1&=\tau s^2 \int_{ \mathbb B_{1/2}^+}\psi \beta a_{ij} D_i wD_j w\, dy+\tau s^2  \int_{ \mathbb B_{1/2}^+} D_i (a_{ij} \psi \beta) w D_j w \,dy \nonumber \\ &-\tau s^2  \int_{\partial \mathbb B_{1/2}^+} \psi \beta w  a_{ij} D_i w \nu_j\, dS \nonumber \\
&\geq \tau s^2 \int_{ \mathbb B_{1/2}^+}\psi \beta | a_{ij} D_i wD_j w| \, dy-C\tau s^3 \int_{ \mathbb B_{1/2}^+}\psi |\nabla w| |w|\,dy\nonumber \\& -\tau s\int_{\partial \mathbb B_{1/2}^+}\psi\beta w a_{ij} D_j w\nu_j\, dS\nonumber \\
&\geq  \frac{9}{10}\tau s^2 \int_{ \mathbb B_{1/2}^+}\psi \beta a_{ij}D_i w D_j w \, dy-C\tau s^{10} \int_{ \mathbb B_{1/2}^+}\psi^2 w^2 \, dy-C\tau s^2 \int_{\partial \mathbb B_{1/2}^+}\psi w^2\, dS \nonumber \\
&-C\tau s^2 \int_{\partial \mathbb B_{1/2}^+}\psi |\nabla w|^2\, dS.
\label{LL1}
\end{align}
Taking (\ref{product}), (\ref{LL2}), (\ref{LL3}), (\ref{LL4}), (\ref{LL1}),  and $\tau> C_s(|\alpha|+\sqrt{|\lambda|})$ into account gives that
\begin{align}
\langle P_1 w, \ \frac{5\tau s^2}{2}\psi\beta(y) w\rangle &\geq \frac{9\tau s^2}{4} \int_{ \mathbb B_{1/2}^+}\psi \beta a_{ij}D_i w D_j w \, dy- \frac{5\tau^3 s^4}{2}\int_{ \mathbb B_{1/2}^+}\psi^3 \beta^2 w^2\,dy \nonumber \\
& -C\tau s^2 \int_{\partial \mathbb B_{1/2}^+}\psi w^2\, dS-C\tau s^2 \int_{\partial \mathbb B_{1/2}^+}\psi |\nabla w|^2\, dS.
\label{LLLii}
\end{align}

Since
\begin{align}
\|P_1 w\|^2 +\frac{25}{4}\|\tau s^2\psi\beta w\|^2 \geq 2 \langle P_1 w,  \frac{5\tau s^2}{2}\psi\beta w\rangle,
\end{align}
from (\ref{keyinner}), we obtain that
\begin{align}
\|P_\tau w\|^2 +\frac{25}{4}\|\tau s^2\psi\beta w\|^2 &\geq 2 \langle P_1 w,  \frac{5\tau s^2}{2}\tau s^2\psi\beta w\rangle+ 2 \langle P_1 w,  P_2 w\rangle \nonumber \\ &-\|\tau\psi a(y,s)w\|^2.
\end{align}
From the expression of $a(y, s)$ in (\ref{axs}), we can absorb $\|\tau\psi a(y,s)w\|^2$ into the inner product $\langle P_1 w, \  P_2 w\rangle$
by the dominating term $\tau^3 s^4 \int_{\mathbb B_{1/2}^+}\psi^3\beta^2 w^2 \, dy$ in (\ref{inner}).
We can absorb $\|\tau s^2\psi\beta w\|^2$ into the inner product $\langle P_1 w, \  P_2 w\rangle$ as well. Thanks to (\ref{keyinner}), (\ref{inner}) and (\ref{LLLii}), using the assumption that $\tau >  C_s(|\alpha|+\sqrt{|\lambda|})$ and $s$ is a fixed large constant,
we arrive at
\begin{align}
\|P_\tau w\|^2&+ \tau^3 s^3  \int_{\partial \mathbb B_{1/2}^+}\psi^3   w^2 \, dS+ \tau s^2
\int_{\partial \mathbb B_{1/2}^+}\psi |\nabla w|^2 \, dS \nonumber \\
 &\geq C\tau^3 s^4 \int_{\mathbb B_{1/2}^+}\psi^3  w^2 \, dy + C\tau s^2\int_{ \mathbb B_{1/2}^+} \psi|\nabla w|^2\,dy.
\end{align}
Recall (\ref{wvv}) and the operator $P_0$. We derive the following Carleman estimates for $v$ as
\begin{align}
\| e^{\tau \psi} P_0v \|^2&+ \tau^3 s^3  \int_{\partial \mathbb B_{1/2}^+}\psi^3 e^{2\tau \psi}   v^2 \, dS+ \tau s^2
\int_{\partial \mathbb B_{1/2}^+}\psi  e^{2\tau \psi}|\nabla v|^2 \, dS \nonumber \\
 &\geq C\tau^3 s^4 \int_{\mathbb B_{1/2}^+}\psi^3  e^{2\tau \psi} v^2 \, dy + C\tau s^2\int_{ \mathbb B_{1/2}^+} \psi e^{2\tau \psi}|\nabla v|^2\,dy.
\end{align}
Thus, we arrive at the conclusion in the proposition.
\end{proof}

\section{Appendix}
In this section, we provide the proof of  Lemma \ref{lemm1} in section 3.

\begin{proof}[Proof of Lemma \ref{lemm1}]
Without loss of generality, we may assume $x_0$ as the origin.
We select $R$ satisfying $0<R<\frac{r_0}{6}$ with $r_0$ as in the proposition \ref{pro1}.
Set the annulus $A_{R_1, R_2}=\{ y\in \mathbb B_\delta; R_1\leq r(y)\leq R_2\}$. Thus, $\|v\|_{R_1, R_2}$ is the $L^2$ norm of $v$  in the annulus  $A_{R_1, R_2}$.
We introduce a smooth cut-off function $\eta(r)\in C^\infty_0(\mathbb B_{3R})$ with $0<\eta(r)<1$ satisfying the following properties:
\begin{itemize}
\item $\eta(r)=0$ \ \ \mbox{if} \ $r(y)<\frac{R}{4}$ \ \mbox{or} \  $r(y)>\frac{5R}{2}$,\medskip
\item $\eta(r)=1$ \ \ \mbox{if} \ $\frac{3R}{4}<r(y)<\frac{9R}{4}$,\medskip
\item $|\nabla \eta|\leq \frac{C}{R},$\medskip
\item $|\nabla^2\eta|\leq \frac{C}{R^2}.$
\end{itemize}
Due to the definition of $\eta$, the function $\eta \bar u$ has compact support in the annulus $A_{\frac{R}{4}, \frac{5R}{2}}$.  Applying the Carleman estimates (\ref{carl}) with $v$ replaced by $\eta \bar u$ and taking it consideration that $\bar u$ is the solution for the elliptic equations (\ref{target}) yields that
\begin{align}
\tau \| e^{\tau \psi} \bar u  \|&\leq \|r^2  e^{\tau \psi}\big(\triangle(\eta \bar u )+\hat{b}(y)\cdot\nabla (\eta \bar u )+ \hat{c}(y) \eta \bar u \big)\| \nonumber \\
&\leq C\| r^2 e^{\tau \psi}(\triangle \eta \bar u +2\nabla\eta\cdot \nabla \bar u +\hat{b}\cdot \nabla \eta \bar u )\|.
\end{align}
Notice that the parameter $\tau\geq 1$. From the properties of $\eta$, it follows that
\begin{align*}
\| e^{\tau \psi} \bar u \|_{\frac{3R}{4}, \frac{9R}{4}}&\leq C (\| e^{\tau \psi} \bar u \|_{\frac{R}{4}, \frac{3R}{4}}+\| e^{\tau \psi} \bar u \|_{\frac{9R}{4}, \frac{5R}{2}} ) \\
&+ C( R\| e^{\tau \psi} \nabla \bar u \|_{\frac{R}{4}, \frac{3R}{4}}+ R\| e^{\tau \psi} \nabla \bar u \|_{\frac{9R}{4}, \frac{5R}{2}}) \\
&+ C (|\alpha|+1)R (\| e^{\tau \psi} \bar u \|_{\frac{R}{4}, \frac{3R}{4}}+\| e^{\tau \psi}\bar u \|_{\frac{9R}{4}, \frac{5R}{2}} ).
\end{align*}
Since $R\leq 1$, we obtain that
\begin{align*}
\| e^{\tau \psi} \bar u \|_{\frac{3R}{4}, \frac{9R}{4}}&\leq C(|\alpha|+1) (\| e^{\tau \psi} \bar u \|_{\frac{R}{4}, \frac{3R}{4}}+\| e^{\tau \psi} \bar u \|_{\frac{9R}{4}, \frac{5R}{2}} ) \\
&+ C( R\| e^{\tau \psi} \nabla \bar u \|_{\frac{R}{4}, \frac{3R}{4}}+ R\| e^{\tau \psi} \nabla \bar u \|_{\frac{9R}{4}, \frac{5R}{2}}).
\end{align*}

Recall the weight function $\psi(r)=-\ln r- \ln (\ln r)^2 $. We see that $\psi(r)$ is radial and decreasing. Thus, we can deduce that
\begin{align}
e^{\tau \psi(2R)}\|  \bar u \|_{\frac{3R}{4}, 2R}&\leq C(|\alpha|+1) (e^{\tau \psi(\frac{R}{4})} \|  \bar u \|_{\frac{R}{4}, \frac{3R}{4}}+e^{\tau \psi(\frac{9R}{4})} \| \bar u \|_{\frac{9R}{4}, \frac{5R}{2}} ) \nonumber \\
&+ C( Re^{\tau \psi(\frac{R}{4})}\| \nabla \bar u \|_{\frac{R}{4}, \frac{3R}{4}}+ Re^{\tau \psi(\frac{9R}{4})}\|  \nabla \bar u \|_{\frac{9R}{4},
\frac{5R}{2}}). \label{mdd}
\end{align}
For the equation (\ref{target}), it is known that the Caccioppoli type inequality
\begin{equation}
\|\nabla \bar u \|_{(1-a)r}\leq \frac{C(|\alpha|+\sqrt{|\lambda|})}{r}\| \bar u \|_{r}
\label{Cacc}
\end{equation}
holds  with any $0<a<1$. Applying such inequality gives that
\begin{align}
R \|  \nabla \bar u \|_{\frac{R}{4}, \frac{3R}{4}}\leq C (|\alpha|+\sqrt{|\lambda|}) \| \bar u \|_{R}.
\label{give1}
\end{align}
Using the same strategy implies that
\begin{align}
R \|  \nabla \bar u \|_{\frac{9R}{4}, \frac{5R}{2}}\leq C(|\alpha|+\sqrt{|\lambda|}) \| \bar u \|_{3R}.
\label{give2}
\end{align}
Substituting (\ref{give1}) and (\ref{give2}) into (\ref{mdd}) gives that
\begin{align}
\|\bar u \|_{\frac{3R}{4}, 2R}\leq C (|\alpha|+\sqrt{|\lambda|}) \big( e^{\tau(\psi(\frac{R}{4})-\psi(2R))} \|\bar u \|_R+  e^{\tau(\psi(\frac{9R}{4})-\psi(2R))} \|\bar u \|_{3R}\big).
\label{add}
\end{align}
We introduce two parameters
$$ \beta^1_R=\psi(\frac{R}{4})-\psi(2R),$$
$$ \beta^2_R=\psi(2R)-\psi(\frac{9R}{4}).$$
Due to the explicit form of $\psi $, we can check that
$$ 0<\beta^{-1}_1<\beta^1_R<\beta_1 \quad \mbox{and} \quad 0<\beta_2<\beta^2_R<\beta^{-1}_2 $$
for some $\beta_1$ and $\beta_2$ independent of $R$.
Adding $\|\bar u \|_{\frac{3R}{4}}$ to both sides of the inequality (\ref{add}) and considering that $\psi(\frac{R}{4})-\psi(2R)>0$, we get that
\begin{equation}
\|\bar u \|_{2R}\leq C(|\alpha|+\sqrt{|\lambda|})\big( e^{\tau\beta_1}\|\bar u \|_R+  e^{-\tau\beta_2}\|\bar u \|_{3R}     \big).
\end{equation}
In order to move the second term on the right hand side of the last inequality to the left hand side, we choose $\tau$ such that
\begin{equation} C(|\alpha|+\sqrt{|\lambda|})e^{-\tau\beta_2}\|\bar u \|_{3R}\leq \frac{1}{2}\|\bar u \|_{2R}.
\label{have}  \end{equation}
To have (\ref{have}), it is enough to require
$$\tau\geq \frac{1}{\beta_2} \ln \frac{2C(|\alpha|+\sqrt{|\lambda|})\|\bar u \|_{3R}}{\|\bar u \|_{2R} }.   $$
Because of such choice of $\tau$, we obtain that
\begin{equation}
\|\bar u \|_{2R}\leq C(|\alpha|+\sqrt{|\lambda|}) e^{\tau\beta_1}\|\bar u \|_R.
\label{substitute2}
\end{equation}
Recall the assumption that the parameter $\tau>C(\sqrt{|\lambda|}+|\alpha|)$ in Carleman estimates (\ref{carl}). We choose
$$ \tau=C(|\alpha|+\sqrt{|\lambda|})+\frac{1}{\beta_2} \ln \frac{2C(|\alpha|+\sqrt{|\lambda|})\|\bar u\|_{3R}}{\|\bar u\|_{2R} }. $$
Substituting such $\tau$ in (\ref{substitute2}) yields that
\begin{align}
\|\bar u \|_{2R}^{\frac{\beta_2+\beta_1}{\beta_2}} \leq e^{ C (|\alpha|+\sqrt{|\lambda|}) }\|\bar u \|_{3R}^{\frac{\beta_1}{\beta_2}} \|\bar u \|_R.
\end{align}
Raising exponent $\frac{\beta_2}{\beta_2+\beta_1}$ to both sides of last inequality yields that
\begin{align}
\|\bar u \|_{2R} \leq e^{C (|\alpha|+\sqrt{|\lambda|})}\|\bar u \|_{3R}^{\frac{\beta_1}{\beta_1+\beta_2}} \|\bar u \|_R^{\frac{\beta_2}{\beta_1+\beta_2}},
\end{align}
where we have used again the fact that $\beta_1$, $\beta_2$ independent of $R$. Let $$\beta={\frac{\beta_2}{\beta_1+\beta_2}}.$$  Therefore,  the quantitative three-ball inequality in Lemma \ref{lemm1} is obtained.
\end{proof}

\noindent{\bf Acknowledgement.} The author thanks Professor Steve Zelditch for bringing the reference \cite{TZ} to our attentions and helpful discussions.

\end{document}